\theoremstyle{plain}
\newtheorem{thm}{Theorem}[section]
\newtheorem{cor}[thm]{Corollary}
\newtheorem{prop}[thm]{Proposition}
\newtheorem{lemma}[thm]{Lemma}
\newcounter{example}
\newenvironment{example}[1][]{\refstepcounter{example}\par\medskip
\noindent \textbf{Example~\theexample. #1} \rmfamily}{\medskip}
\numberwithin{equation}{section}
\DeclareMathOperator{\cl}{cl}
\newcommand{\del}{\setminus}
\newtheorem{sublemma}{}[thm]
\newtheorem{subsublemma}{}[sublemma]
\theoremstyle{definition}
\newtheorem*{ex:1}{Example \ref{example:1} (continued)}
\newtheorem*{ex:2}{Example \ref{ex:k-natural} (continued)} 
\title[Natural matroids and excluded minors]{The excluded minors for
  three classes of $2$-polymatroids having special types of natural
  matroids}
\author[J.~Bonin]{Joseph E.~Bonin} \address
{Department of Mathematics\\ The George Washington University\\
  Washington, D.C.\ 20052, USA} \email {jbonin@gwu.edu,
  kevinlong@gwmail.gwu.edu} \author[K.~Long]{Kevin Long}
\date{\today}
\begin{document}

\begin{abstract}
  If $\mathcal{C}$ is a minor-closed class of matroids, the class
  $\mathcal{C}'$ of integer polymatroids whose natural matroids are in
  $\mathcal{C}$ is also minor closed, as is the class $\mathcal{C}'_k$
  of $k$-polymatroids in $\mathcal{C}'$.  We find the excluded minors
  for $\mathcal{C}'_2$ when $\mathcal{C}$ is (i) the class of binary
  matroids, (ii) the class of matroids with no $M(K_4)$-minor, and,
  combining those, (iii) the class of matroids whose connected
  components are cycle matroids of series-parallel networks.  In each
  case the class $\mathcal{C}$ has finitely many excluded minors, but
  that is true of $\mathcal{C}'_2$ only in case (ii).  We also
  introduce the $k$-natural matroid, a variant of the natural matroid
  for a $k$-polymatroid, and use it to prove that these classes of
  $2$-polymatroids are closed under $2$-duality.
\end{abstract}

\maketitle

\section{Introduction}\label{section:introduction}

An \emph{integer polymatroid} is a pair $(E, \rho)$ where $E$ is a
finite set and $\rho:2^E\to\mathbb{Z}$ is a function that satisfies
the following properties:
\begin{enumerate}
\item $\rho$ is \emph{normalized}, that is, $\rho(\emptyset)=0$,
\item $\rho$ is \emph{nondecreasing}, that is, if
  $A\subseteq B\subseteq E$, then $\rho(A)\leq \rho(B)$, and
\item $\rho$ is \emph{submodular}, that is,
  $\rho(A\cup B)+\rho(A\cap B)\leq \rho(A)+\rho(B)$ for all
  $A, B\subseteq E$.
\end{enumerate}
We focus exclusively on integer polymatroids, so we refer to them as
\emph{polymatroids}.  We often refer to a polymatroid by its
\emph{rank function} $\rho$, calling $\rho$ a polymatroid on $E$.  A
polymatroid $(E, \rho)$ is a \emph{$k$-polymatroid} if $\rho(i)\leq k$
for all $i\in E$.  Thus, matroids are $1$-polymatroids.  Polymatroids
generalize matroids by allowing elements of higher rank: while
matroids consist of loops (elements of rank $0$) and points (elements
of rank $1$), polymatroids can contain loops, points, lines (elements
of rank $2$), planes (elements of rank $3$), and so on.  Via the rank
function, the matroid notions of deletion, contraction, and minors
generalize directly to polymatroids; see Section
\ref{section:background} for the definitions.
		
The class of matroids that are representable over a field $\mathbb{F}$
is minor closed, and so can be characterized by its excluded minors.
A well-known conjecture due to Rota states that if $\mathbb{F}$ is a
finite field, then the list of excluded minors for the class of
$\mathbb{F}$-representable matroids is finite.  Geelen, Gerards, and
Whittle \cite{Rota's conjecture} announced a proof of this conjecture.
	
The situation for polymatroids is more difficult.  Oxley, Semple, and
Whittle \cite{wheels and whirls} constructed an infinite family of
excluded minors for $\mathbb{F}$-representable $2$-polymatroids, so a
direct counterpart of Rota's conjecture for polymatroids is not true.
They suggest a variant of Rota's conjecture for polymatroids,
conjecturing that $2$-polymatroids have finitely many excluded
p-minors (a variant of minors that has a third reduction operation in
addition to deletion and contraction).  The complete list of excluded
minors for binary $2$-polymatroids is not known.
	
Characterizing the class of binary $2$-polymatroids by excluded minors
appears to be a difficult problem.  We treat a simpler problem: we
find the excluded minors for the subclass of $2$-polymatroids whose
natural matroids are binary.  Geometrically, the natural matroid of a
polymatroid $(E, \rho)$ is the matroid obtained by replacing each
$e\in E$ with $\rho(e)$ points lying freely in $e$, as illustrated in
Figure \ref{fig:exceptions}. (For the precise definition of the natural
matroid, see Section \ref{section:natural}.)  Theorem \ref{thm: main
  theorem} gives the excluded minors for the class of $2$-polymatroids
whose natural matroids are binary.  There are infinitely many excluded
minors.  It is often difficult to find the excluded minors for a
minor-closed class of matroids that has infinitely many excluded
minors, such as base-orderable matroids \cite{non-base-orderable, sbo}
and gammoids \cite{gammoids, sbo}.  Classes of matroids for which the
excluded minors are known and there are infinitely many include nested
matroids \cite{nested}, lattice path matroids \cite{lpm}, laminar
matroids \cite{laminar}, $2$-laminar matroids and $2$-closure-laminar
matroids \cite{generalized laminar}; classes of polymatroids of this
type include Boolean polymatroids \cite{Matus}, the union, over all
$k\geq 1$, of the classes of $k$-quotient polymatroids
\cite{quotient}, and the class of lattice path polymatroids
\cite{LPP}.

The class of binary matroids has one excluded minor, $U_{2,4}$, and,
as just stated, the class of $2$-polymatroids whose natural matroids
are binary has infinitely many excluded minors.  In contrast, the
class of $2$-polymatroids whose natural matroids have no
$M(K_4)$-minor has only finitely many excluded minors; we find them in
Section \ref{section:K_4} (see Theorem \ref{thm:noK4exmin}).  Cycle
matroids of series-parallel networks are binary, have no
$M(K_4)$-minor, and are connected.  We are interested in the broader
class of what we call \emph{series-parallel matroids}, which are
direct sums of cycle matroids of series-parallel networks.  The class
of series-parallel matroids is minor closed; its excluded minors are
$U_{2,4}$ and $M(K_4)$.  With Theorems \ref{thm: main theorem} and
\ref{thm:noK4exmin}, we find the excluded minors for the class of
$2$-polymatroids whose natural matroids are series-parallel matroids
(see Theorem \ref{thm:spexmin}); there are infinitely many excluded
minors.

\begin{figure}[t]
\centering
\subfigure[$Z_3$]{
  \begin{tikzpicture}[scale=1]
  \draw[thick](0,1)--(1.5,2.25);
  \draw[thick](1.5,2.25)--(1.5,0.2);
  \draw[thick](1.5,2.25)--(3,1);
  \draw[thin, gray!50](1.5,0.2)--(0,1);
  \draw[thin, gray!50](1.55,1)--(3,1);
  \draw[thin, gray!50](1.5,0.2)--(3,1);
  \draw[thin, gray!50](1.45,1)--(0,1);
  \node at (.7,1.72) {\footnotesize $a$};
  \node at (1.6,1.5) {\footnotesize $b$};
  \node at (2.3,1.72) {\footnotesize $c$};
  \label{subfig:Z_3}
  \end{tikzpicture}}
\hspace{20pt}
\subfigure[$Z_{2,2}$]{
  \begin{tikzpicture}[scale=1]
  \draw[thick](0,1)--(1.5,2.25);
  \draw[thin, gray!50](1.5,2.25)--(1.5,0.2);
  \draw[thick](1.5,2.25)--(3,1);
  \draw[thin, gray!50](1.5,0.2)--(0,1);
  \draw[thin, gray!50](1.55,1)--(3,1);
  \draw[thin, gray!50](1.5,0.2)--(3,1);
  \draw[thin, gray!50](1.45,1)--(0,1);
  \node at (.7,1.72) {\footnotesize $a$};
  \filldraw (1.5,1.4) node[right] {\footnotesize $b_2$} circle (2pt);
  \filldraw (1.5,0.6) node[right] {\footnotesize $b_1$} circle (2pt);	
  \node at (2.3,1.72) {\footnotesize $c$};
  \label{subfig:Z_2,2}
  \end{tikzpicture}}
\hspace{20pt}
\subfigure[$M_{Z_3}=M_{Z_{2,2}}$]{
  \begin{tikzpicture}[scale=1]
  \draw[thin, gray!50](0,1)--(1.5,2.25);
  \draw[thin, gray!50](1.5,2.25)--(1.5,0.2);
  \draw[thin, gray!50](1.5,2.25)--(3,1);
  \draw[thin, gray!50](1.5,0.2)--(0,1);
  \draw[thin, gray!50](1.55,1)--(3,1);
  \draw[thin, gray!50](1.5,0.2)--(3,1);
  \draw[thin, gray!50](1.45,1)--(0,1);		
  \filldraw (0,1) node[above] {\footnotesize $a_1$} circle  (2pt);
  \filldraw (0.75,1.625) node[above] {\footnotesize $a_2$} circle  (2pt);
  \filldraw (1.5,0.6) node[right] {\footnotesize $b_1$} circle (2pt);	
  \filldraw (1.5,1.4) node[right] {\footnotesize $b_2$} circle (2pt);
  \filldraw (3,1)  node[above] {\footnotesize $c_1$} circle  (2pt);
  \filldraw (2.25,1.625)  node[above] {\footnotesize $c_2$} circle  (2pt);
  \label{subfig:natural}
  \end{tikzpicture}}
\caption{Parts \subref{subfig:Z_3} and \subref{subfig:Z_2,2} each show
  a rank-$4$ polymatroid.  Part \subref{subfig:natural} shows the
  natural matroid for both polymatroids.}
\label{fig:exceptions}
\end{figure}
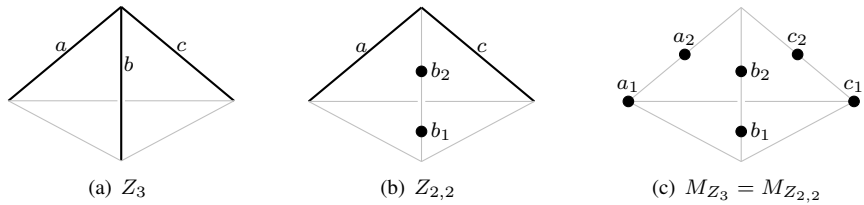

Sections \ref{section:background} through \ref{section:compression}
pave the way for that work.  Background on polymatroids and the
natural matroid of a polymatroid appears in Section
\ref{section:background}.  In Section \ref{sec:classes}, we discuss
the classes of polymatroids that we focus on.  In Section
\ref{section:k-natural}, we introduce the $k$-natural matroid of a
polymatroid, a variation of the natural matroid.  With this
construction, which is more compatible with $k$-duality than is the
natural matroid, we prove that the classes of $2$-polymatroids that we
treat are closed under $2$-duality.  In Section
\ref{section:compression}, we analyze how the polymatroid operation of
compression interacts with taking the natural matroid, the $k$-natural
matroid, and the $2$-dual.  A key result there is Lemma \ref{lemma:
  compression of excluded minor}, about excluded minors that have
compressions that are also excluded minors, which we use to reduce the
problem of finding excluded minors to two smaller problems.
Compression reduces the only infinite sequence of excluded minors in
Theorems \ref{thm: main theorem} and \ref{thm:spexmin} to a single
excluded minor.

\section{Background}\label{section:background}

Our terminology and notation for matroid theory follow Oxley
\cite{oxley}.  We let $[n]$ be the set $\{1,2,\ldots,n\}$.
As $E(M)$ denotes the ground set of a matroid $M$, so $E(\rho)$ is the
ground set of a polymatroid $\rho$.

\subsection{Polymatroids}
Given a polymatroid $(E, \rho)$ and some $X\subseteq E$, the
\emph{deletion of} $X$ is the polymatroid $(E-X,\rho_{\setminus X})$
where $\rho_{\setminus X}(Y)=\rho(Y)$ for $Y\subseteq E-X$.  The
deletion $\rho_{\setminus X}$ is also called the \emph{restriction of}
$\rho$ to $E-X$, denoted $\rho_{|E- X}$.  The \emph{contraction of}
$X$ is the polymatroid $(E- X,\rho_{/X})$ where
$\rho_{/X}(Y)= \rho(Y\cup X)-\rho(X)$ for $Y\subseteq E- X$.  When
$X=\{e\}$, we may write $\rho_{\setminus X}$ as $\rho_{\setminus e}$
and $\rho_{/X}$ as $\rho_{/e}$.  As for matroids, if $X$ and $Y$ are
disjoint, then $(\rho_{\setminus X})_{/Y}=(\rho_{/Y})_{\setminus X}$,
$(\rho_{\setminus X})_{\setminus Y}=\rho_{\setminus X\cup Y}$, and
$(\rho_{/X})_{/Y}=\rho_{/X\cup Y}$.  Sequences of these operations
produce minors; equivalently, a polymatroid $\rho'$ is a \emph{minor}
of $\rho$ if $\rho'= (\rho_{\setminus X})_{/Y}$ for some disjoint
subsets $X$ and $Y$ of $E$.

A class $\mathcal{C}$ of polymatroids is \emph{minor closed} if,
whenever $\rho\in \mathcal{C}$, all minors of $\rho$ are in
$\mathcal{C}$.  A minor-closed class $\mathcal{C}$ of polymatroids is
characterized by its \emph{excluded minors}, that is, the polymatroids
that are not in $\mathcal{C}$ but all of their proper minors are in
$\mathcal{C}$.

For $X\subseteq E$, its \emph{closure} is
$\cl(X)=\{e\in E\,:\,\rho(X\cup e)=\rho(X)\}$.  Like matroid closure,
polymatroid closure has the properties that $X\subseteq \cl(X)$ and
$\cl(\cl(X))=\cl(X)$ for all $X\subseteq E$, and if
$X\subseteq Y\subseteq E$, then $\cl(X)\subseteq \cl(Y)$.  A set $X$
\emph{spans} an element $e$ if $e\in\cl(X)$, and it \emph{spans} a set
$Y$ if $Y\subseteq \cl(X)$.  A set $F$ is a \emph{flat} if $F=\cl(X)$
for some $X\subseteq E$.

Elements $e, f\in E$ are \emph{parallel} if
$0<\rho(e)=\rho(f)=\rho(\{e, f\})$.  A $2$-polymatroid can have
parallel points and parallel lines.  For $X\subseteq E$, the elements
of $X$ are \emph{collinear} if $\rho(X)=2$, and \emph{coplanar} if
$\rho(X)=3$.  Elements $e$ and $f$ are \emph{skew} if neither is a
loop and $\rho(e)+\rho(f)=\rho(\{e,f\})$.  If $0<\rho(e)<\rho(f)$ and
$\rho(f)=\rho(\{e,f\})$, then $e$ \emph{lies on} $f$.
	
For polymatroids $(E_1, \rho_1)$ and $(E_2, \rho_2)$ with disjoint
ground sets, their \emph{direct sum} is the polymatroid
$(E_1\cup E_2,\rho_1\oplus \rho_2)$ where
$(\rho_1\oplus\rho_2)(X)=\rho_1(X\cap E_1)+\rho_2(X\cap E_2)$.  A
polymatroid is \emph{connected} if it is not the direct sum of two
nonempty polymatroids.

\subsection{The natural matroid}\label{section:natural}

Given a polymatroid $(E,\rho)$, its natural matroid $M_\rho$ is
defined as follows.  For each $e\in E$, let $X_e$ be a set of
$\rho(e)$ elements where the sets $X_e$, for all $e\in E$, are
pairwise disjoint.  For $A\subseteq E$, let
$$X_A=\bigcup_{e\in A}X_e.$$  The \emph{natural
  matroid} of $\rho$, denoted $M_\rho$, is the matroid $(X_E, r)$
where
\begin{equation}\label{eq:NatMtdRk}
  r(X)=\min\{\rho(A)+|X-X_A|\,:\, A\subseteq E\}.
\end{equation}
A result of McDiarmid \cite{natural proof} shows that this is a
matroid.  Note that if $\rho$ is a loopless matroid and we take
$X_e=\{e\}$ for each $e\in E$, then the natural matroid of $\rho$ is
$\rho$.  Also, if $e\in E$ and $e_1, e_2\in X_e$, then $e_1$ and $e_2$
are \emph{clones}, that is, the transposition $(e_1,e_2)$ that swaps
$e_1$ and $e_2$ and fixes all other elements of $X_E$ is an
automorphism of $M_\rho$.  We call $X\subseteq E$ a \emph{set of
  clones} if $a,b\in X$ are clones whenever $a\ne b$.

Helgason \cite{Helgason} introduced the natural matroid to prove the
following result \cite{Helgason, Lovasz, natural proof}.

\begin{thm}\label{thm:Helgason}
  A function $\rho:2^E\to \mathbb{Z}$ is a polymatroid if and only if
  there is a matroid $M$ on some set $E'$ and a function
  $\phi:E\to 2^{E'}$ with $\rho(A)=r_M(\bigcup_{e\in A} \phi(e))$ for
  all $A\subseteq E$.
\end{thm}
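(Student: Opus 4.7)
The plan is to handle the two directions separately; the forward implication is a straightforward verification of the polymatroid axioms, while the reverse is witnessed by the natural matroid construction that has just been introduced.

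For the \emph{if} direction, suppose $M$ is a matroid on $E'$ and $\phi:E\to 2^{E'}$, and set $\rho(A)=r_M(\bigcup_{e\in A}\phi(e))$. Writing $U_A=\bigcup_{e\in A}\phi(e)$, normalization is immediate from $U_\emptyset=\emptyset$, and monotonicity from $U_A\subseteq U_B$ whenever $A\subseteq B$, together with the corresponding properties of $r_M$. For submodularity, observe that $U_{A\cup B}=U_A\cup U_B$ and $U_{A\cap B}\subseteq U_A\cap U_B$, so
\[
\rho(A\cup B)+\rho(A\cap B)=r_M(U_A\cup U_B)+r_M(U_{A\cap B})\le r_M(U_A\cup U_B)+r_M(U_A\cap U_B)\le r_M(U_A)+r_M(U_B),
\]
using monotonicity of $r_M$ and then submodularity of $r_M$.

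For the \emph{only if} direction, the obvious candidate is the natural matroid itself: set $M=M_\rho$ and $\phi(e)=X_e$, so that $\bigcup_{e\in A}\phi(e)=X_A$. Citing McDiarmid's result that $M_\rho$ is indeed a matroid, it remains to verify $r(X_A)=\rho(A)$ for every $A\subseteq E$, where $r$ is given by \eqref{eq:NatMtdRk}. The upper bound $r(X_A)\le \rho(A)$ is obtained by taking $B=A$ in the minimum defining $r$. For the lower bound, fix any $B\subseteq E$; since the sets $X_e$ are pairwise disjoint, $|X_A-X_B|=\sum_{e\in A\setminus B}\rho(e)$. Iterating the submodular inequality $\rho(C\cup e)-\rho(C)\le \rho(e)$ along an enumeration of $A\setminus B$ yields $\rho(A\cup B)\le \rho(B)+\sum_{e\in A\setminus B}\rho(e)$, and then monotonicity gives $\rho(A)\le \rho(A\cup B)\le \rho(B)+|X_A-X_B|$. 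Taking the minimum over $B$ produces $\rho(A)\le r(X_A)$.

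There is no real obstacle here: the one nontrivial input is that \eqref{eq:NatMtdRk} defines a matroid, which is granted by the quoted result of McDiarmid. Everything else reduces to selecting $B=A$ and to the standard fact that submodularity plus normalization implies the elementwise bound $\rho(C\cup e)\le \rho(C)+\rho(e)$, which iterates to the required inequality.
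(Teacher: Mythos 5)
Your proof is correct. The paper does not prove this theorem itself (it only cites Helgason, Lov\'asz, and McDiarmid), but your argument is exactly the standard one implicit in the paper's setup: the easy direction is a routine check of the three axioms, and the hard direction is witnessed by the natural matroid with $\phi(e)=X_e$, where the only nontrivial input is McDiarmid's result that \eqref{eq:NatMtdRk} defines a matroid, and your verification that $r(X_A)=\rho(A)$ (upper bound by choosing $B=A$, lower bound by iterating subadditivity over $A\setminus B$ and using monotonicity) is complete and accurate.
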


We will use the lemma below, which is a weaker version of a result in
\cite{natural matroid}; the original result further limits the sets
$X_A$ that need to be considered.

\begin{lemma}\label{lemma:shownatural}
  Let $\rho$, $E$, $X_e$, and $X_A$ be as above.  A matroid $M$ on
  $X_E$ is $M_\rho$ if and only if each set $X_e$ is a set of clones
  and $r_M(X_A)=\rho(A)$ for all $A\subseteq E$.
\end{lemma}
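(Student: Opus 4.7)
The plan is to handle the two directions separately; the backward implication is the substantive one. For the forward direction, assuming $M = M_\rho$, the clone property is exactly the observation recorded in the paragraph just before the lemma. For the rank identity $r_M(X_A) = \rho(A)$, I would apply the formula (\ref{eq:NatMtdRk}) with $X = X_A$: choosing $A' = A$ gives the upper bound $\rho(A)$, and for any other $A'$ the disjointness of the sets $X_e$ yields $X_A \setminus X_{A'} = X_{A \setminus A'}$, so $|X_A \setminus X_{A'}| = \sum_{e \in A \setminus A'} \rho(e) \geq \rho(A \setminus A')$ by subadditivity of $\rho$, and combining with $\rho(A') + \rho(A \setminus A') \geq \rho(A)$ (subadditivity again) gives the matching lower bound.

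For the backward direction I would avoid computing ranks on arbitrary subsets and instead show that $M$ and $M_\rho$ have the same independent sets, which determines the matroid. Reading the formula (\ref{eq:NatMtdRk}) at $X = I$ shows that $I$ is independent in $M_\rho$ exactly when $|I \cap X_A| \leq \rho(A)$ for every $A \subseteq E$, so the goal reduces to proving the same characterization for $M$. One direction is immediate under the hypotheses: if $I$ is independent in $M$ then $|I \cap X_A| = r_M(I \cap X_A) \leq r_M(X_A) = \rho(A)$.

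The real work is the converse. Suppose $|I \cap X_A| \leq \rho(A)$ for every $A$ but $I$ contains a circuit $C$ of $M$, and set $A = \{e \in E : C \cap X_e \ne \emptyset\}$. The heart of the proof is the subclaim $X_A \subseteq \cl_M(C)$. Fix $e \in A$ and pick $c \in C \cap X_e$; for any $x \in X_e \setminus C$ the clone hypothesis makes the transposition $(c, x)$ an automorphism of $M$, so $C' = (C \setminus \{c\}) \cup \{x\}$ is also a circuit. Applying submodularity to $C$ and $C'$, whose intersection is the size-$(|C|-1)$ independent set $C \setminus \{c\}$ and whose union is $C \cup \{x\}$, gives $r_M(C \cup \{x\}) + (|C|-1) \leq 2(|C|-1)$, so $r_M(C \cup \{x\}) = |C|-1 = r_M(C)$ and $x \in \cl_M(C)$. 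Together with the trivial inclusion $X_e \cap C \subseteq \cl_M(C)$, this yields $X_e \subseteq \cl_M(C)$ for every $e \in A$, hence $X_A \subseteq \cl_M(C)$. Then $\rho(A) = r_M(X_A) \leq r_M(C) = |C| - 1$, and therefore $|I \cap X_A| \geq |C| \geq \rho(A) + 1$, contradicting the assumption on $I$.

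I expect the clone-based production of the partner circuit $C'$ to be the main subtlety: it is exactly this step that converts the single-set hypothesis $r_M(X_A) = \rho(A)$ into a genuine closure statement about the circuit $C$, and once the subclaim is in hand the rest reduces to routine bookkeeping with the matroid axioms.
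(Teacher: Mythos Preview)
Your argument is correct. One small terminological point: in the forward direction, the inequality $\rho(A') + \rho(A \setminus A') \geq \rho(A)$ uses both subadditivity (for the disjoint union $A' \cup (A \setminus A') = A \cup A'$) and monotonicity ($\rho(A \cup A') \geq \rho(A)$), not subadditivity alone; but the conclusion is fine.

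As for comparison, the paper does not prove this lemma at all: it is quoted from \cite{natural matroid} as a weakened form of a result proved there (the stronger result needs only certain sets $X_A$, not all of them). So there is no in-paper proof to compare against. Your approach---characterizing the independent sets of $M_\rho$ directly from the defining formula~(\ref{eq:NatMtdRk}) as those $I$ with $|I\cap X_A|\le\rho(A)$ for all $A$, and then using the clone hypothesis to show that any circuit $C$ of $M$ spans the whole of $X_A$ for $A=\{e:C\cap X_e\ne\emptyset\}$---is a clean and self-contained route. The clone step, where swapping $c\in C\cap X_e$ with $x\in X_e\setminus C$ produces another circuit $C'$ and submodularity on $C,C'$ forces $x\in\cl_M(C)$, is exactly the right idea; this is the standard way to leverage clone sets to get closure information, and the rest is indeed bookkeeping.
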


Observe that if $\rho$ and $\rho'$ are polymatroids on $E$ with
$M_\rho=M_{\rho'}$ and, for each $e\in E$, the corresponding set $X_e$
is the same in both natural matroids, then $\rho=\rho'$.

\begin{example}\label{example:1}
  Different polymatroids may have the same natural matroid, as Figure
  \ref{fig:exceptions} illustrates.  There, $Z_3$ is the rank-$4$
  polymatroid consisting of three pairwise coplanar lines, and
  $Z_{2,2}$ is the rank-$4$ polymatroid obtained by freely placing two
  points on a line $L$ in $Z_3$ (made precise below) and then deleting
  $L$.
\end{example}

\subsection{Principal extension}
Let $M=(E, r)$ be a matroid.  For $X\subseteq E$ and $e\notin E$,
define $r':2^{E\cup e}\to \mathbb{Z}$ by, for all $Y\subseteq E$,
setting $r'(Y)=r(Y)$ and
$$r'(Y\cup e)=\begin{cases}
  r(Y), & \text{if } X\subseteq\cl(Y)\\
  r(Y)+1, & \text{otherwise.}
\end{cases}$$ It is easy to check that $r'$ is the rank function of a
matroid on $E\cup e$.  This matroid, denoted $M+_X e$, is the
\emph{principal extension} of $M$ in which $e$ has been \emph{freely
  added} to $X$.  The \emph{free extension} of $M$ by $e$ is $M+_Ee$.
Note that $M+_X e = M+_{\cl(X)} e$, that $(M+_X e)\setminus e=M$, and
that $\cl(X)\cup e$ is a flat of $M+_X e$.  Also,
$(M+_X e_1)+_X e_2=(M+_X e_2)+_X e_1$, so iteratively adding
$e_1,e_2,\ldots,e_k$ to $X$ is independent of the order in which
$e_1,e_2,\ldots,e_k$ are added; the resulting matroid, denoted
$M+_X\{e_1, e_2,\ldots, e_k\}$, is an \emph{iterated principal
  extension} of $M$.

We use Theorem \ref{thm:Helgason} to define principal extensions of
polymatroids.  Fix a polymatroid $\rho$ on $E$, an element
$e\not\in E$, a nonnegative integer $k$, and a subset $F\subseteq E$
with $\rho(F)\geq k$.  Take the natural matroid $M_\rho=(X_E,r)$ of
$\rho$ and fix a $k$-element set $X_e$ with $X_e\cap X_E=\emptyset$.
For $A\subseteq E$, let $X_{A\cup e}=X_A\cup X_e$.  Let $(N,r')$ be
the iterated principal extension $M_\rho+_{X_F}X_e$ of $M_\rho$.
Define $\phi:E\cup e\to 2^{X_{E\cup e}}$ by $\phi(f)=X_f$ for
$f\in E\cup e$ and apply Theorem \ref{thm:Helgason}.  This yields a
polymatroid $(E\cup e, \rho')$ where $\rho'(A)=r'(X_A)$ for
$A\subseteq E\cup e$.  This polymatroid $\rho'$ is the \emph{principal
  extension} of $\rho$ in which the rank-$k$ element $e$ has been
\emph{freely added} to $F$.  If $F=E$, then $\rho'$ is the \emph{free
  extension} of $\rho$ by $e$.

\section{The classes of polymatroids of interest}\label{sec:classes}

The following observation is made in \cite{natural matroid}.

\begin{lemma}\label{lem:naturalminor}
  For any polymatroid $\rho$, we have
  $M_{\rho_{\setminus e}}=M_\rho\setminus X_e$ for all $e\in E$.
  Likewise, $M_{\rho_{/e}}=M_\rho/X_e|U$ where
  $|U\cap X_f|= \rho_{/e}(f)$ for each $f\in E-e$.
\end{lemma}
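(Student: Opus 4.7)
The plan is to apply Lemma \ref{lemma:shownatural} in both cases: for a candidate matroid $M$, it suffices to check that each distinguished set is a set of clones in $M$ and that $r_M(X_A)$ equals the polymatroid rank of $A$ for every $A$ in the relevant index set. For the deletion, the candidate is $M_\rho\setminus X_e$, whose ground set is $\bigcup_{f\in E-e} X_f$. For $f\ne e$, any swap of two clones in $X_f$ is an automorphism of $M_\rho$ that fixes $X_e$ pointwise and so restricts to an automorphism of $M_\rho\setminus X_e$; hence each $X_f$ is still a set of clones. For $A\subseteq E-e$ we have $r_{M_\rho\setminus X_e}(X_A)=r_{M_\rho}(X_A)=\rho(A)=\rho_{\setminus e}(A)$, so Lemma \ref{lemma:shownatural} gives $M_{\rho_{\setminus e}}=M_\rho\setminus X_e$.

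For the contraction, the key observation is that $X_f$ remains a set of clones in $M_\rho/X_e$: any transposition of two clones in $X_f$ is an automorphism of $M_\rho$ that fixes $X_e$ setwise, so it descends to an automorphism of $M_\rho/X_e$. Since all elements of $X_f$ are pairwise clones there, the restriction $(M_\rho/X_e)|X_f$ is a uniform matroid whose rank is
$$r_{M_\rho/X_e}(X_f)=r_{M_\rho}(X_{\{e,f\}})-r_{M_\rho}(X_e)=\rho(\{e,f\})-\rho(e)=\rho_{/e}(f).$$
Submodularity ensures $\rho_{/e}(f)\le\rho(f)=|X_f|$, so any choice of $U\cap X_f$ with $|U\cap X_f|=\rho_{/e}(f)$ is a basis of this uniform matroid and therefore spans $X_f$ in $M_\rho/X_e$.

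Taking unions over $f\in A$, the set $U\cap X_A=\bigcup_{f\in A}(U\cap X_f)$ spans $X_A$ in $M_\rho/X_e$, so $r_{(M_\rho/X_e)|U}(U\cap X_A)=r_{M_\rho/X_e}(X_A)=\rho_{/e}(A)$. Moreover, the transposition of two elements of $U\cap X_f$ is an automorphism of $M_\rho/X_e$ that fixes $U$ setwise, so it restricts to an automorphism of $(M_\rho/X_e)|U$, making each $U\cap X_f$ a set of clones there. Applying Lemma \ref{lemma:shownatural} to $\rho_{/e}$ on $E-e$ with the distinguished sets $U\cap X_f$ yields $M_{\rho_{/e}}=(M_\rho/X_e)|U$. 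The one nontrivial step is the identification of $(M_\rho/X_e)|X_f$ as a uniform matroid of rank $\rho_{/e}(f)$, driven entirely by the clone property of $X_f$; the rest is bookkeeping with the rank formula for contraction.
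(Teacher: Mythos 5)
Your proof is correct. Note that the paper does not actually prove this lemma; it is quoted as an observation from \cite{natural matroid}, so there is no in-paper argument to compare against. Your self-contained verification via Lemma \ref{lemma:shownatural} is the natural route: both halves reduce to checking that the distinguished sets remain sets of clones and that the ranks of the sets $X_A$ (respectively $U\cap X_A$) match the polymatroid values, and all of your rank computations are right, including $r_{M_\rho/X_e}(X_A)=r_{M_\rho}(X_{A\cup e})-r_{M_\rho}(X_e)=\rho_{/e}(A)$.

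The one step you state without justification is that a set of pairwise clones induces a uniform restriction. This is true and worth a sentence: since every transposition within $X_f$ is an automorphism of $M_\rho/X_e$ fixing the complement, the full symmetric group on $X_f$ acts by automorphisms, so the rank of a subset of $X_f$ depends only on its cardinality; submodularity then forces that rank function to be $k\mapsto\min\{k,r\}$ with $r=r_{M_\rho/X_e}(X_f)$, i.e., the restriction is uniform. With that supplied, the conclusion that any $\rho_{/e}(f)$-element subset of $X_f$ spans $X_f$ in the contraction, and hence that $U\cap X_A$ spans $X_A$, goes through exactly as you wrote it.
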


This simple lemma yields the observation that is the starting point
for this paper: if $\mathcal{C}$ is a minor-closed class of matroids
and $\mathcal{C}'$ is the class of polymatroids whose natural matroids
are in $\mathcal{C}$, then $\mathcal{C}'$ is minor closed.  Similarly,
the class $\mathcal{C}'_k$ of $k$-polymatroids in $\mathcal{C}'$ is
also minor closed since minors of $k$-polymatroids are
$k$-polymatroids.

We are interested in $\mathcal{C}'_2$ for three minor-closed classes
$\mathcal{C}$ of matroids: binary matroids; matroids with no
$M(K_4)$-minor; and series-parallel matroids.  In each case, we find
the excluded minors for $\mathcal{C}'_2$.

A polymatroid is $\emph{binary-natural}$ if its natural matroid is
binary.

\begin{ex:1}
  The matroid in part (c) of Figure \ref{fig:exceptions} is the
  parallel connection of three $3$-circuits at a common base point,
  with the base point deleted.  Parallel connections of binary
  matroids are binary, so $Z_3$ and $Z_{2,2}$ are binary-natural.
\end{ex:1}

Let $\mathcal{P}_{U_{2,4}}$ denote the class of binary-natural
$2$-polymatroids.  The uniform matroid $U_{2,4}$ is the unique
excluded minor for the class of binary matroids, so a $2$-polymatroid
$\rho$ is in $\mathcal{P}_{U_{2,4}}$ if and only if $U_{2,4}$ is not a
minor of $M_\rho$.  Also, $U_{2,4}$ is an excluded minor for
$\mathcal{P}_{U_{2,4}}$ and it is the only excluded minor that is a
matroid.  Any polymatroid (automatically a $2$-polymatroid) whose
natural matroid is $U_{2,4}$ is an excluded minor for
$\mathcal{P}_{U_{2,4}}$.  There are three such polymatroids:
$U_{2,4}$; a line with two points on it, denoted $L_2$; and two
parallel lines, denoted $S_2$. See Figure \ref{fig:natural}.

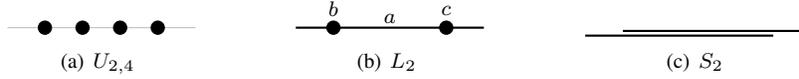
\begin{figure}[t]
\centering
\subfigure[$U_{2,4}$]{
  \begin{tikzpicture}[scale=2.5]
  \draw[thin, gray!50](0,0)--(1,0);
  \filldraw (.2, 0) node {} circle  (1pt);
  \filldraw (.4, 0) node {} circle  (1pt);
  \filldraw (.6, 0) node {} circle  (1pt);
  \filldraw (.8, 0) node {} circle  (1pt);
  \label{subfig:U_2,4}
  \end{tikzpicture}}
\hspace{30pt}
\subfigure[$L_2$]{
  \begin{tikzpicture}[scale=2.5]
  \draw[thick](0,0)--(1,0);
  \filldraw (.2, 0) node {} circle  (1pt);
  \filldraw (.8, 0) node {} circle  (1pt);
  \node at (.5,.05) {\footnotesize $a$};
  \node at (.2,.1) {\footnotesize $b$};
  \node at (.8,.09) {\footnotesize $c$};
  \label{subfig:L_2}
  \end{tikzpicture}}
\hspace{30pt}
\subfigure[$S_2$]{
  \begin{tikzpicture}[scale=2.5]
  \draw[thick](0,.025)--(1,.025);
  \draw[thick](-.2,0)--(.8, 0);
  \label{subfig:S_2}
  \end{tikzpicture}}
\caption{The three polymatroids $\rho$ with $ M_\rho=U_{2,4}$.   All
  are excluded minors for $\mathcal{P}_{U_{2,4}}$.   The gray line in part
  \subref{subfig:U_2,4} represents the flat spanned by the points, not
  an element of $\rho$.   The lines in part \subref{subfig:S_2} are
  parallel.}
\label{fig:natural}
\end{figure}

\section{The $k$-natural matroid and $k$-duality}\label{section:k-natural}

There are multiple polymatroid counterparts of duality.  We focus on
$k$-duality.  Given a $k$-polymatroid $(E,\rho)$, its $k$\emph{-dual}
is the $k$-polymatroid $(E, \rho^*)$  defined by
$$\rho^*(X)=k|X|-\rho(E)+\rho(E-X)$$ for $X\subseteq E$.   When
we refer to $\rho^*$, we will make clear what $k$ is.  Some important
and desirable properties make this notion of duality particularly
natural.  For instance, Whittle \cite{GeoffDual} showed that the map
$\rho\mapsto \rho^*$ is the only involution on the class of
$k$-polymatroids that switches deletion and contraction, i.e.,
$(\rho_{\del e})^*=(\rho^*)_{/e}$ and
$(\rho_{/e})^*=(\rho^*)_{\del e}$ for all $e\in E$.  (The counterpart
for matroids was shown earlier by Kung \cite{JPSK}.)  It follows that
if a minor-closed class of $k$-polymatroids is closed under
$k$-duality, then so is its set of excluded minors.

We prepare for our work on the excluded minors for the three classes
of $2$-polymatroids discussed in Section \ref{sec:classes} by first
proving Theorem \ref{thm:dual closure of P} below, which implies that
each of these classes is closed under $2$-duality and, therefore, so
is its set of excluded minors.  For this we introduce the $k$-natural
matroid of $\rho$.  Let $(E, \rho)$ be a $k$-polymatroid.  The set
$X_e$, for $e\in E$, is as in the definition of the natural matroid
$M_\rho=(X_E, r)$ of $\rho$.  Let $M^k_\rho=(Y_E, r_k)$, the
\emph{$k$-natural matroid} of $\rho$, be the matroid obtained by the
following iterated principal extensions of $M_\rho$: for each
$e\in E$, freely add a set $U_e$ of $k-\rho(e)$ elements to $X_e$.
For $e\in E$, set $Y_e=X_e\cup U_e$, and for $A\subseteq E$, set
$$Y_A=\bigcup_{e\in A}Y_e.$$ Note that if 
$\rho(e)=k$ for all $e\in E$, then $M_\rho=M^k_\rho$.  Observe that
Lemma~\ref{lemma:shownatural} extends to $k$-natural matroids, as
follows.

\begin{lemma}\label{lemma:showknatural}
  Let $\rho$, $E$,  $Y_e$, and $Y_A$ be as above.  A matroid $M$
  on $Y_E$ is $M^k_\rho$ if and only if each set $Y_e$ is a set of clones
  and $r_M(Y_A)=\rho(A)$ for all $A\subseteq E$.
\end{lemma}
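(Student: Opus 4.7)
The plan is to mirror the proof of Lemma~\ref{lemma:shownatural}, handling the extra freely-added elements via the principal-extension machinery. Recall that $M^k_\rho$ is obtained from $M_\rho$ by iteratively adding each $U_e$ freely to $X_e$, and that such extensions commute, so the construction is order-independent.

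For the forward direction, assume $M=M^k_\rho$. The rank equality is immediate: each $u\in U_e$ with $e\in A$ was freely added to $X_e\subseteq X_A$, so $u\in\cl_{M^k_\rho}(X_A)$, giving $r_{M^k_\rho}(Y_A)=r_{M^k_\rho}(X_A)=r_{M_\rho}(X_A)=\rho(A)$, using Lemma~\ref{lemma:shownatural} at the last step. For the clone condition, Lemma~\ref{lemma:shownatural} already tells us $X_e$ is a set of clones in $M_\rho$. I would then show that freely adding a new element $u$ to $X_e$ makes $(x,u)$ an automorphism of the extended matroid for each $x\in X_e$: via the principal-extension rank formula this reduces to the equivalence $x\in\cl(S)\iff X_e\subseteq\cl(S)$ for $S$ disjoint from $\{x,u\}$, which is immediate from $X_e$ being a set of clones. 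Iterating within $Y_e$, and noting that subsequent extensions at some $X_f$ with $f\ne e$ preserve the clone structure in $Y_e$ (because the swap fixes $X_f$ pointwise), we conclude that all of $Y_e$ is a set of clones in $M^k_\rho$.

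For the reverse direction, assume $M$ satisfies both conditions. Because $Y_e$ is a set of clones and $r_M(Y_e)=\rho(e)$ with $|Y_e|=k$, the restriction $M|Y_e$ must be the uniform matroid $U_{\rho(e),k}$; in particular the $\rho(e)$-subset $X_e$ is independent and spans $Y_e$, so $U_e\subseteq\cl_M(X_e)$. It follows that $r_M(X_A)=r_M(Y_A)=\rho(A)$ for every $A\subseteq E$, and Lemma~\ref{lemma:shownatural} applied to $M|X_E$ then yields $M|X_E=M_\rho$. It remains to show $r_M=r_{M^k_\rho}$ on every $S\subseteq Y_E$. For this I would use clone symmetry to reduce the computation to $X_E$: whenever $|S\cap Y_e|>\rho(e)$, the uniform structure on $Y_e$ puts each element of $S\cap Y_e$ in the closure of the rest, so such elements may be discarded without changing the rank; once $|S\cap Y_e|\le\rho(e)$ for all $e$, iterated clone-swap automorphisms within each $Y_e$ carry $S\cap U_e$ into unused slots of $X_e$, producing $S'\subseteq X_E$ with $r_M(S)=r_M(S')=r_{M_\rho}(S')$. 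Performing the identical reduction inside $M^k_\rho$ (which satisfies the same hypotheses) yields the same value, so $M=M^k_\rho$.

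The main obstacle is the reverse direction: the two hypotheses directly pin down the rank of $M$ only on the ``block'' sets $Y_A$, so one must combine the uniform structure within each $Y_e$ with the clone-swap automorphisms to force equality of the full rank functions on arbitrary subsets of $Y_E$. The forward direction is more routine, but still requires a careful verification that free addition promotes the clone relation from $X_e$ to all of $Y_e$ and that this property survives the remaining extensions.
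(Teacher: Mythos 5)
Your argument is correct. The paper does not actually prove this lemma --- it is presented only as an observation that Lemma~\ref{lemma:shownatural} ``extends'' to the $k$-natural matroid --- so your write-up supplies details the authors omit, and it does so along the natural lines: both directions check out, including the two points that genuinely need care, namely that free addition of $U_e$ to a set of clones $X_e$ enlarges the clone class to $Y_e$ (your reduction to $x\in\cl(S)\iff X_e\subseteq\cl(S)$ is valid, since for $y\in X_e-x$ not in $S$ the transposition $(x,y)$ fixes $S$ and hence $\cl(S)$), and that in the converse direction the two hypotheses force $M|Y_e\cong U_{\rho(e),k}$ (rank depending only on cardinality plus submodularity gives $r(j)=\min(j,\rho(e))$), after which the discard-and-swap reduction pins down the whole rank function.
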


An observation that we made about the natural matroid extends
immediately to the $k$-natural matroid: if $\rho$ and $\rho'$ are
polymatroids on $E$ with $M^k_\rho=M^k_{\rho'}$ where, for each
$e\in E$, the corresponding set $Y_e$ is the same in both $k$-natural
matroids, then $\rho=\rho'$.

We use a result from \cite{natural matroid} on types of bases and
develop related ideas for the $k$-natural matroid.  Let $\rho$ be a
polymatroid on $E=[n]$ and let $M_\rho=(X_E,r)$ be its natural
matroid.  For $V\subseteq X_E$, its \emph{type} in $M_\rho$ is the
vector $\textbf{v}=(v_1, v_2, \ldots, v_n)$ where $v_i=|V\cap X_i|$.
Since each set $X_i$ is a set of clones, if $V, V'\subseteq X_E$ have
the same type, then $r(V)=r(V')$; thus, $V$ is independent in $M_\rho$
if and only if $V'$ is.  The same holds for bases, circuits, and
flats.

For $V\subseteq Y_E$, its \emph{type} in the $k$-natural matroid
$M_\rho^k$ is $\textbf{v}=(v_1, v_2, \ldots, v_n)$ where
$v_i=|V\cap Y_i|$.  Since the elements of $U_i$ are freely added to
the set $X_i$ of clones, $Y_i$ is also a set of clones.  Hence, as in
the natural matroid, sets in the $k$-natural matroid of the same type
have the same rank.  If $V\subseteq E(M_\rho)$, then
$V\subseteq E(M_\rho^k)$ and its type is the same in $M_\rho$ as in
$M_\rho^k$.  If it is clear whether we are referring to $M_\rho$ or
$M_\rho^k$, we will just say that $V$ has type $\textbf{v}$.

For any $n$-tuple $\textbf{u}$, each entry of which is an integer
between $0$ and $k$, the $n$-tuples $\textbf{u}$ and
$(k, k, \ldots, k)-\textbf{u}$ are \emph{$k$-complements}.  The
following result, from \cite{natural matroid}, generalizes the fact
that the bases of a matroid $M$ are the complements of the bases of
its dual, $M^*$.

\begin{lemma}\label{ref:complementtypes}
  Let $\rho$ be a $k$-polymatroid and $\rho^*$ be its $k$-dual.  If
  $\mathcal{B}$ is the set of types of bases of $M_\rho$, then the set
  of types of bases of $M_{\rho^*}$ is
  $\{(k, k,\ldots, k)-\mathbf{u}\,:\,\mathbf{u}\in\mathcal{B}\}$.  That
  is, the types of bases of $M_{\rho^*}$ are the $k$-complements of
  the types of bases of $M_\rho$.
\end{lemma}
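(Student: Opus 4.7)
The plan is to bridge $M_\rho$ and $M_{\rho^*}$ via the $k$-natural matroid. Unlike the ordinary natural matroids, whose ground sets generally have different sizes, the $k$-natural matroids $M_\rho^k$ and $M_{\rho^*}^k$ both sit on a ground set partitioned into $n$ blocks of size $k$, so that the $k$-complement of a type is literally the type of the set-theoretic complement. The central claim, from which the lemma follows after a short translation, is that
\[
  (M_\rho^k)^* = M_{\rho^*}^k
\]
once the ground sets are identified so that the blocks $Y_i$ correspond.

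To prove this identification, I would invoke Lemma~\ref{lemma:showknatural}. Each $Y_i$ is a set of clones in $M_\rho^k$ by construction, and since the transpositions witnessing this are automorphisms of $M_\rho^k$, they are also automorphisms of $(M_\rho^k)^*$, so the clone hypothesis carries over. The required rank identity then follows from the matroid duality formula in a one-line computation:
\[
  r_{(M_\rho^k)^*}(Y_A) = k|A| - r(M_\rho^k) + r_{M_\rho^k}(Y_{E-A}) = k|A| - \rho(E) + \rho(E-A) = \rho^*(A),
\]
using $r_{M_\rho^k}(Y_B) = \rho(B)$ for all $B \subseteq E$, which is Lemma~\ref{lemma:showknatural} applied to $M_\rho^k$ itself.

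The remaining step is to check that the set of types of bases of $M_\rho^k$ coincides with that of $M_\rho$ (and likewise for $\rho^*$). Since $M_\rho^k$ arises from $M_\rho$ by iterated principal extension, its restriction to $X_E$ is $M_\rho$ and both matroids have rank $\rho(E)$; hence every basis of $M_\rho$ remains a basis of $M_\rho^k$ of the same type. Conversely, if $B$ is a basis of $M_\rho^k$ of type $\mathbf{u}$, then $u_i = |B \cap Y_i| \le r_{M_\rho^k}(Y_i) = \rho(i) = |X_i|$, so there exists $B' \subseteq X_E$ of type $\mathbf{u}$; the clone property makes $B'$ a basis of $M_\rho^k$, and $B' \subseteq X_E$ then makes it a basis of $M_\rho$. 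Chaining yields: basis types of $M_{\rho^*}$ equal those of $M_{\rho^*}^k = (M_\rho^k)^*$, which are the $k$-complements of basis types of $M_\rho^k$, and thus of $M_\rho$.

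I expect the only real subtlety to be notational: one must keep straight the two independently constructed copies of $Y_E$ for $M_\rho^k$ and $M_{\rho^*}^k$ and justify the identification that matches the $Y_i$-partitions, because it is under this identification that the set-theoretic complement in $Y_E$ translates into the $k$-complement of types. Everything beyond this is a direct application of Lemma~\ref{lemma:showknatural}, classical matroid duality, and the clone property of the blocks $Y_i$.
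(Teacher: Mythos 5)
Your proof is correct, but be aware that this paper does not actually prove Lemma~\ref{ref:complementtypes}: it is quoted from \cite{natural matroid}, and within this paper the logical flow runs the other way --- Lemma~\ref{lemma:dual of k-natural}, including the identity $(M_\rho^k)^*=M_{\rho^*}^k$, is \emph{derived from} Lemma~\ref{ref:complementtypes}. You invert that dependency: you establish $(M_\rho^k)^*=M_{\rho^*}^k$ directly from Lemma~\ref{lemma:showknatural} (clones pass to the dual since an automorphism of $M$ is an automorphism of $M^*$, and the dual rank formula gives $r_{(M_\rho^k)^*}(Y_A)=k|A|-\rho(E)+\rho(E-A)=\rho^*(A)$ precisely because every block $Y_i$ has exactly $k$ elements), and you transfer basis types between $M_\rho$ and $M_\rho^k$ by the same clone argument the paper uses for part (1) of Lemma~\ref{lemma:dual of k-natural}. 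None of these ingredients relies on Lemma~\ref{ref:complementtypes}, so there is no circularity, and your route is self-contained within this paper's toolkit; what the detour through the $k$-natural matroid buys is a ground set of $n$ blocks of uniform size $k$, on which $k$-complementation of types is literally set-theoretic complementation --- exactly what fails for $M_\rho$ and $M_{\rho^*}$ themselves, whose ground sets generally have different sizes. The only point to spell out in a polished write-up is the one you flag: the block-by-block identification of the two copies of $Y_E$, which is legitimate because the characterization in Lemma~\ref{lemma:showknatural} refers only to the partition into blocks $Y_e$ and not to the internal split $Y_e=X_e\cup U_e$.
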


Note that $M_{\rho^*}$ need not be dual to $M_\rho$; indeed, $M_\rho$
and $M_{\rho^*}$ may not even have the same number of elements.  The
following example of this is illustrated in Figure
\ref{fig:k-natural}.

\begin{figure}
\centering
\subfigure[$M_{L_2}^2$]{
\begin{tikzpicture}[scale=2]
  \draw[thin, gray!50](0,0)--(2,0);
  \filldraw (.4, 0) node[above] {\footnotesize $a_1$} circle  (1pt);
  \filldraw (1.6,0) node[above] {\footnotesize $a_2$} circle  (1pt);
  \filldraw (.8, .03) node[above] {\footnotesize $b_1$} circle  (1pt);
  \filldraw (.8, -.03) node[below] {\footnotesize $b_2$} circle  (1pt);
  \filldraw (1.2, .03) node[above] {\footnotesize $c_1$} circle  (1pt);
  \filldraw (1.2, -.03) node[below] {\footnotesize $c_2$} circle  (1pt);
  \label{subfig:M_{L_2^2}}
\end{tikzpicture}}
	
\subfigure[$A_4$]{
  \begin{tikzpicture}[scale=2]
    \draw[thick](0,0)--(0,1);
    \draw[thick](-.7,-.25)--(-.7,.75);
    \draw[thick](.7,-.25)--(.7,.75);
    \draw[thin, gray!50](0,1)--(-.7,.75);
    \draw[thin, gray!50](0,0)--(-.7,-.25);
    \draw[thin, gray!50](0,1)--(.7,.75);
    \draw[thin, gray!50](0,0)--(.7,-.25);
    \node at (.1,.5) {\footnotesize $a$};
    \node at (-.8,.25) {\footnotesize $b$};
    \node at (.8,.25) {\footnotesize $c$};
    \label{subfig:A_4}
  \end{tikzpicture}}
\hspace{30pt}
\subfigure[$M_{A_4}=M_{A_4}^2$]{
  \begin{tikzpicture}[scale=2]
    \draw[thin, gray!50](0,0)--(0,1);
    \draw[thin, gray!50](-.7,-.25)--(-.7,.75);
    \draw[thin, gray!50](.7,-.25)--(.7,.75);
    \draw[thin, gray!50](0,1)--(-.7,.75);
    \draw[thin, gray!50](0,0)--(-.7,-.25);
    \draw[thin, gray!50](0,1)--(.7,.75);
    \draw[thin, gray!50](0,0)--(.7,-.25);
    \filldraw (0, .3) node[right] {\footnotesize $a_2$} circle  (1pt);
    \filldraw (0,.6) node[right] {\footnotesize $a_1$} circle  (1pt);
    \filldraw (-.7, 0.08) node[left] {\footnotesize $b_2$} circle  (1pt);
    \filldraw (-.7,.42) node[left] {\footnotesize $b_1$} circle  (1pt);
    \filldraw (.7, .08) node[right] {\footnotesize $c_2$} circle  (1pt);
    \filldraw (.7,.42) node[right] {\footnotesize $c_1$} circle  (1pt);
    \label{subfig:M_A_4}
  \end{tikzpicture}}
\caption{Part \subref{subfig:M_{L_2^2}} shows the $2$-natural matroid
  of the $2$-polymatroid $L_2$ shown in Figure \ref{fig:natural}.  Part
  \subref{subfig:A_4} shows the $2$-dual of $L_2$, denoted $A_4$.  Part
  \subref{subfig:M_A_4} shows the $2$-natural matroid of $A_4$, which
  is also its natural matroid.}
\label{fig:k-natural}
\end{figure}
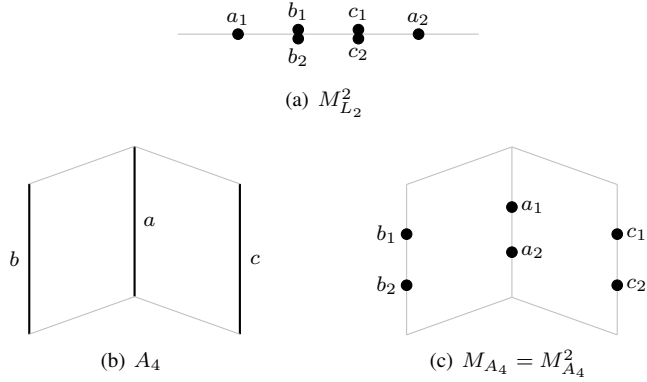

\begin{example}\label{ex:k-natural}
  Let $L_2$ be the $2$-polymatroid shown in Figure \ref{fig:natural}.
  The $2$-dual of $L_2$ consists of three lines in rank $4$ where $a$
  is coplanar with each of $b$ and $c$, and $b$ is skew to $c$.  Let
  $A_4$ denote this polymatroid.  With the order $a<b<c$ on the
  elements of $L_2$ and $A_4$, the types of bases of $M_{L_2}$ are
  $(2,0,0)$, $(1,1,0)$, $(1,0,1),$ and $(0,1,1)$, and those of
  $M_{A_4}$ are their $2$-complements, $(0,2,2)$, $(1,1,2)$,
  $(1,2,1),$ and $(2,1,1)$.  However, $M_{L_2}$ is not dual to
  $M_{A_4}$ since $|E(M_{L_2})|=4$ and $|E(M_{A_4})|=6$.
	
  Consider the $2$-natural matroids.  Now $M_{L_2}^2$ just adds points
  parallel to points in $M_{L_2}$, while $M_{A_4}^2=M_{A_4}$ since all
  elements of $A_4$ are lines.  While $M_{L_2}^2$ has more bases than
  $M_{L_2}$, the types of bases are the same, so the types of bases of
  $M_{L_2}^2$ are $2$-complements of those of $M_{A_4}^2$.  Also, one
  can check that $M_{L_2}^2$ is the dual of $M_{A_4}^2$.
\end{example}

The next lemma shows, for any $k$-polymatroid $\rho$, that $M_\rho$
has the same basis types as $M_\rho^k$ and that $M_\rho^k$ is dual to
$M_{\rho^*}^k$.

\begin{lemma}\label{lemma:dual of k-natural}
  Let $\rho$ be a $k$-polymatroid on $E=[n]$ and $\rho^*$ be its
  $k$-dual.  Then
  \begin{itemize}
  \item[(1)] $\mathbf{u}$ is the type of a basis of $M_\rho$ if and
    only if $\mathbf{u}$ is the type of a basis of $M_\rho^k$,
  \item[(2)] the types of bases of $M^k_{\rho^*}$ are the
    $k$-complements of the types of bases of $M^k_\rho$, and
  \item[(3)] the $k$-natural matroid of $\rho$ is dual to that of
    $\rho^*$, i.e., $(M_\rho^k)^*=M_{\rho^*}^k$.
  \end{itemize}
\end{lemma}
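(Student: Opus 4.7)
The three parts stack naturally: part (1) does the structural work, part (2) is an immediate combination of (1) with Lemma \ref{ref:complementtypes}, and part (3) is a direct calculation using Lemma \ref{lemma:showknatural}.

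For part (1), the key observation is that $M^k_\rho$ is obtained from $M_\rho$ by iteratively freely adding the elements of $\bigcup_{e\in E}U_e$, and since $(M+_X e)\setminus e=M$, we have $M^k_\rho\setminus \bigcup_{e\in E}U_e=M_\rho$. Because both matroids have rank $\rho(E)$, every basis of $M_\rho$ is a basis of $M^k_\rho$ of the same type, which settles one direction. For the converse, let $B$ be a basis of $M^k_\rho$ with type $\mathbf{v}$. Applying Lemma \ref{lemma:showknatural} with $A=\{e\}$ gives $r_{M^k_\rho}(Y_e)=\rho(e)$, so $v_e=|B\cap Y_e|\le \rho(e)=|X_e|$ for every $e$. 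Choose $B'\subseteq X_E$ with $|B'\cap X_e|=v_e$ for every $e$; since $Y_e$ is a set of clones in $M^k_\rho$, $B'$ has the same type as $B$ and hence the same rank, so $B'$ is a basis of $M^k_\rho$ contained in $X_E$, which makes it a basis of $M_\rho$ of type $\mathbf{v}$.

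Part (2) follows at once: by (1), the types of bases of $M^k_\rho$ and $M_\rho$ coincide, and likewise for $M^k_{\rho^*}$ and $M_{\rho^*}$, so Lemma \ref{ref:complementtypes} transfers verbatim to the $k$-natural matroids. For part (3), I would verify the two hypotheses of Lemma \ref{lemma:showknatural} for $(M^k_\rho)^*$ on the ground set $Y_E$. Each $Y_e$ is a set of clones in $M^k_\rho$, and any automorphism of a matroid is an automorphism of its dual, so each $Y_e$ remains a set of clones in $(M^k_\rho)^*$. For the rank condition, the standard dual-rank formula together with $Y_E\setminus Y_A=Y_{E\setminus A}$ and Lemma \ref{lemma:showknatural} applied to $M^k_\rho$ yields
\[
r_{(M^k_\rho)^*}(Y_A)=|Y_A|-r(M^k_\rho)+r_{M^k_\rho}(Y_E\setminus Y_A)=k|A|-\rho(E)+\rho(E\setminus A)=\rho^*(A)
\]
for every $A\subseteq E$. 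Applying Lemma \ref{lemma:showknatural} to $\rho^*$ then gives $(M^k_\rho)^*=M^k_{\rho^*}$.

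The main obstacle is the second half of part (1): the rank bound $v_e\le \rho(e)$ and the clone-swapping that places a basis inside $X_E$. Once (1) is secured, (2) is automatic and (3) reduces to a single line of dual-rank arithmetic combined with the clone-preservation under duality.
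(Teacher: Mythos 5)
Your proof is correct. Parts (1) and (2) follow the paper's argument essentially verbatim: every basis of $M_\rho$ is a basis of $M^k_\rho$ of the same type, and conversely a basis of $M^k_\rho$ has $v_e\le\rho(e)$ and can be relocated into $X_E$ by clone-swapping, after which Lemma \ref{ref:complementtypes} gives (2); your explicit justification of $v_e\le\rho(e)$ via $r_{M^k_\rho}(Y_e)=\rho(e)$ is a small refinement of a step the paper asserts without comment. Where you genuinely diverge is part (3). The paper deduces (3) from (2): since $|Y_i|=k$ for every $i$, the ground sets of $M^k_\rho$ and $M^k_{\rho^*}$ may be identified, and the $k$-complementation of basis types then forces the bases of $M^k_{\rho^*}$ to be exactly the set-complements of the bases of $M^k_\rho$. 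You instead verify the two conditions of Lemma \ref{lemma:showknatural} for $(M^k_\rho)^*$ directly: clone pairs are preserved under duality, and the dual rank formula gives $r_{(M^k_\rho)^*}(Y_A)=k|A|-\rho(E)+\rho(E-A)=\rho^*(A)$. Your route makes (3) independent of (1) and (2) and isolates precisely why the uniform block size $|Y_e|=k$ makes the $k$-natural matroid, unlike the natural matroid, compatible with $k$-duality; the paper's route gets (3) nearly for free once (2) is in hand. Both arguments are sound.
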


\begin{proof}
  Each basis of $M_\rho$ is a basis of $M_\rho^k$, so each type of a
  basis of $M_\rho$ is the type of a basis of $M_\rho^k$.  For the
  converse, let $\mathbf{u}$ be the type of a basis $B$ of $M_\rho^k$.
  Now $u_i\leq \rho(i)$ for all $i\in E$.  Since $Y_i$ is a set of
  clones, we may assume that $B\subseteq X_E$.  Thus, $B$ is a basis
  of $M_\rho$, and so $\mathbf{u}$ is the type of a basis of $M_\rho$.
  Thus, part (1) holds.

  Part (2) follows from part (1) and Lemma \ref{ref:complementtypes}.
  We can let $M^k_\rho$ and $M^k_{\rho^*}$ have the same ground set
  (since $|Y_i|=k$ for all $i\in E$), in which case it follows that
  the complements of the bases of $M^k_\rho$ are the bases of
  $M^k_{\rho^*}$, so part (3) holds.
\end{proof}

Using this lemma, we next obtain the main result of this section,
which applies to the three classes of $2$-polymatroids discussed in
Section \ref{sec:classes}.

\begin{thm}\label{thm:dual closure of P}
  Let $\mathcal{C}$ be a minor-closed, dual-closed class of matroids
  such that all excluded minors for $\mathcal{C}$ are simple.  Then
  the class of $2$-polymatroids whose natural matroids are in
  $\mathcal{C}$ is closed under $2$-duality, as is its set of excluded
  minors.
\end{thm}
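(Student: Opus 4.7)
The plan is to reduce $2$-duality of $\rho$ to duality of its $2$-natural matroid, where Lemma~\ref{lemma:dual of k-natural}(3) supplies the identity $(M^2_\rho)^*=M^2_{\rho^*}$. The hypothesis on excluded minors of $\mathcal{C}$ will let us move freely between $M_\rho$ and $M^2_\rho$ as far as membership in $\mathcal{C}$ is concerned.

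First I would observe that $M_\rho$ and $M^2_\rho$ have the same simplification. By construction, $M^2_\rho$ is obtained from $M_\rho$ by freely adding, for each $e\in E$ with $\rho(e)<2$, a set $U_e$ of $2-\rho(e)$ elements to $X_e$. If $\rho(e)=1$, the element of $U_e$ is freely added to a one-element set and is thus parallel to that element; if $\rho(e)=0$, then $X_e=\emptyset$ and the two elements of $U_e$ are freely added to $\emptyset$, so each is a loop. Hence $M^2_\rho$ differs from $M_\rho$ only by loops and by elements parallel to existing ones. Next I would invoke the standard consequence of the hypothesis on $\mathcal{C}$: when every excluded minor of a minor-closed class is simple, a matroid lies in the class if and only if its simplification does. (The forward direction uses minor-closure; for the converse, a simple excluded minor realized as a minor of $M$ can be realized also as a minor of the simplification of $M$, since loops of $M$ may be moved into the set of deleted elements and, from each parallel class that the excluded minor meets, a single representative may be chosen.) Combined with the previous paragraph, $M_\rho\in\mathcal{C}$ if and only if $M^2_\rho\in\mathcal{C}$, and likewise for $\rho^*$.

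Closure of the class under $2$-duality then follows: if $M_\rho\in\mathcal{C}$, then $M^2_\rho\in\mathcal{C}$, hence $(M^2_\rho)^*\in\mathcal{C}$ by dual-closure of $\mathcal{C}$; by Lemma~\ref{lemma:dual of k-natural}(3) this dual is $M^2_{\rho^*}$, and the equivalence just established gives $M_{\rho^*}\in\mathcal{C}$. Closure of the excluded-minor set is formal from the identities $(\rho_{\del e})^*=(\rho^*)_{/e}$ and $(\rho_{/e})^*=(\rho^*)_{\del e}$ recalled after Whittle's theorem: every proper minor of $\rho^*$ is the $2$-dual of a proper minor of $\rho$, so the $2$-dual of an excluded minor is an excluded minor. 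The only genuinely delicate step is the simplification fact, which the excerpt does not state as a lemma but which follows from a straightforward rearrangement of the deletion/contraction sequence realizing a simple minor; everything else is bookkeeping on top of Lemma~\ref{lemma:dual of k-natural}.
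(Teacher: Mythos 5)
Your proposal is correct and takes essentially the same route as the paper: both reduce the question to the $2$-natural matroid via $(M^2_\rho)^*=M^2_{\rho^*}$ from Lemma~\ref{lemma:dual of k-natural}, and both use the observation that $M^2_\rho$ differs from $M_\rho$ only by loops and parallel elements, so that the hypothesis on excluded minors makes membership in $\mathcal{C}$ equivalent for the two. You simply spell out the simplification argument and the formal deduction for the excluded-minor set that the paper leaves implicit.
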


\begin{proof}
  For a $2$-polymatroid $\rho$, each element added to $M_\rho$ to get
  $M^2_\rho$ is either a loop or parallel to an element of $M_\rho$,
  so $M^2_\rho\in\mathcal{C}$ if and only if $M_\rho\in\mathcal{C}$.
  The first assertion now follows from Lemma \ref{lemma:dual of
    k-natural}.  The second assertion follows from the first.
\end{proof}

Thus, for instance, $A_4$, the $2$-dual of $L_2$, is also an excluded
minor for $\mathcal{P}_{U_{2,4}}$.

While the counterpart of Theorem \ref{thm:dual closure of P} might
hold for $k$-polymatroids with $k>2$, the proof is limited to $k=2$.
Consider the polymatroid $\rho$ consisting of a line with a point on
it, and its $3$-dual, $\rho^*$, which consists of a plane and a line
in rank $4$.  While both $\rho$ and $\rho^*$ are binary-natural, the
$3$-natural matroid of $\rho$ is $U_{2,4}$ with one point tripled,
which is not binary.

The next result follows from Lemma \ref{lemma:dual of k-natural} and
the properties of the $k$-dual noted above.

\begin{lemma}\label{lemma:minors in k-natural}
  Let $(E,\rho)$ be a $k$-polymatroid and $(E,\rho^*)$ be its
  $k$-dual.  If $e\in E$, then
  $M_{\rho_{\setminus e}}^k=(M_{\rho^*_{/e}}^k)^*$ and
  $M_{\rho_{/e}}^k=(M_{\rho^*_{\setminus e}}^k)^*$.
\end{lemma}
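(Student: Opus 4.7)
The plan is to derive this lemma almost entirely formally from part (3) of Lemma \ref{lemma:dual of k-natural}, together with the elementary fact, recorded in Section \ref{section:k-natural}, that $k$-duality interchanges deletion and contraction: $(\rho_{\setminus e})^*=(\rho^*)_{/e}$ and $(\rho_{/e})^*=(\rho^*)_{\setminus e}$, where the left-hand duals are taken on the $(n-1)$-element ground set $E-e$.

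For the first equality, I would start with the polymatroid $\rho_{\setminus e}$, which is a $k$-polymatroid on $E-e$, and apply Lemma \ref{lemma:dual of k-natural}(3) to it. This yields
\[
(M^k_{\rho_{\setminus e}})^* \;=\; M^k_{(\rho_{\setminus e})^*}.
\]
By the switching rule for $k$-duality, $(\rho_{\setminus e})^* = \rho^*_{/e}$, so the right-hand side equals $M^k_{\rho^*_{/e}}$. Taking matroid duals of both sides then gives $M^k_{\rho_{\setminus e}} = (M^k_{\rho^*_{/e}})^*$, which is the first claim. The second claim is completely symmetric: apply Lemma \ref{lemma:dual of k-natural}(3) to $\rho_{/e}$ and use $(\rho_{/e})^*=\rho^*_{\setminus e}$.

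The only subtle point — and it is not really an obstacle, since the proof of Lemma \ref{lemma:dual of k-natural}(3) already handles it — is that we need a common ground set on which to talk about the matroid dual. Because each $Y_i$ has exactly $k$ elements, we may identify the ground sets of $M^k_{\rho_{\setminus e}}$ and $M^k_{(\rho_{\setminus e})^*}$ with the same set $\bigcup_{f\in E-e}Y_f$; the bases of one are then literally the set complements of the bases of the other, so the matroid duality in the displayed equation is unambiguous. With this identification in place, the lemma is just the composition of two already-established facts and no further computation is required.
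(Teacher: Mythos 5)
Your proposal is correct and follows exactly the paper's argument: apply Lemma \ref{lemma:dual of k-natural}(3) to $\rho_{\setminus e}$ (respectively $\rho_{/e}$) and invoke Whittle's switching rule $(\rho_{\setminus e})^*=\rho^*_{/e}$. The remark about identifying ground sets via $|Y_i|=k$ is a sensible elaboration of a point the paper already addresses in the proof of Lemma \ref{lemma:dual of k-natural}.
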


\begin{proof}
  We have
  $M_{\rho_{\setminus e}}^k= (M_{(\rho_{\setminus e})^*}^k)^*=
  (M_{\rho^*_{/e}}^k)^*$.  The other equality follows similarly.
\end{proof}

Observe that the counterpart of the deletion result in Lemma
\ref{lem:naturalminor} holds for the $k$-natural matroid:
$M_{\rho_{\setminus e}}^k=M_\rho^k\setminus Y_e$.  While for the
natural matroid, $M_{\rho_{/ e}}$ is a deletion of $M_\rho/ X_e$, the
situation is simpler for the $k$-natural matroid:
$M_{\rho_{/e}}=M_\rho^k/Y_e$ for all $e\in E$.

We end this section with some consequences of Lemma \ref{lemma:dual of
  k-natural} that are not used in this paper.  By that lemma, the
circuits of $M^k_\rho$ are the complements of the hyperplanes of
$M^k_{\rho^*}$.  As noted in \cite{natural matroid}, this property can
fail in the natural matroid.  Unlike the types of bases, the types of
circuits of $M^k_\rho$ may differ from those of circuits of $M_\rho$,
but in a predictable way; the types of circuits of $M^k_\rho$ consist
of those of circuits of $M_\rho$ and the vectors
$(\rho(i)+1)\mathbf{e}_i$, for $i\in E$, where $\rho(i)<k$ and
$\mathbf{e}_i$ is the $i$-th unit vector.

\begin{ex:2}
  Consider $L_2$ and $A_4$ as in Figures \ref{fig:natural} and
  \ref{fig:k-natural}.  The types of circuits of $M_{L_2}$ are
  $(2,1,0)$, $(2,0,1)$, and $(1,1,1)$, while those of hyperplanes of
  $M_{A_4}$ are $(2,2,0)$, $(2,0,2)$, $(0,1,2)$, $(0,2,1)$, and
  $(1,1,1)$.  Precisely the last three types of hyperplanes are
  $2$-complements of the types of circuits.  The additional types of
  circuits in $M_{L_2}^2$ are $(0,0,2)$ and $(0,2,0)$, which are the
  $2$-complements of the first two types of hyperplanes.  Since
  $M_{A_4}^2=M_{A_4}$, the types of hyperplanes are the same, so the
  $2$-complementary property between circuits and hyperplanes holds.
\end{ex:2}

\section{Compression and its role for excluded minor
  problems}\label{section:compression}

The polymatroid operation of compression was defined in \cite{wheels
  and whirls}.  We recall this operation after explaining why it
arises naturally here.

Let $(E,\rho)$ be an excluded minor for $\mathcal{P}_{U_{2,4}}$, so
$M_\rho$ has a $U_{2,4}$-minor.  Consider $e\in E$.  Since
$M_\rho\setminus X_e=M_{\rho_{\setminus e}}$ and
$M_{\rho_{\setminus e}}\in \mathcal{P}_{U_{2,4}}$, deleting $X_e$
eliminates all $U_{2,4}$-minors of $M_\rho$.  Also,
$M_{\rho_{/e}}\in \mathcal{P}_{U_{2,4}}$ and $M_\rho/X_e$ is
$M_{\rho_{/e}}$ or an extension of it by loops and elements parallel
to those in $M_{\rho_{/e}}$, so contracting $X_e$ eliminates all
$U_{2,4}$-minors of $M_\rho$.  Thus, if $\rho(e)=1$, so $X_e=\{e_1\}$,
then $e_1$ is in each $U_{2,4}$-minor of $M_\rho$.  Also, if
$\rho(e)=2$, so $X_e=\{e_1, e_2\}$, then, to get a $U_{2,4}$-minor of
$M_\rho$, either we
\begin{enumerate}
\item delete one of $e_1$ and  $e_2$ and contract the other, or
\item have at least one of $e_1$ and $e_2$ in the $U_{2,4}$-minor.
\end{enumerate}
Case (2) can occur for at most four elements $e$.  If case (1) never
occurs, then $|E|\leq 4$ since each $U_{2,4}$-minor of $M_\rho$
contains at least one element of each set $X_e$.  If case (1) applies
to $e$, we would like a $2$-polymatroid $\rho'$ such that
$M_{\rho'}=M_\rho/ e_1\setminus e_2$.  If such a $\rho'$ always exists
and is an excluded minor for $\mathcal{P}_{U_{2,4}}$, then we can
reduce $\rho$ to an excluded minor for which case (1) does not occur.
Compression gives this polymatroid $\rho'$.

For a polymatroid $\rho$ on $E$ and $e\in E$, the \emph{compression}
of $\rho$ by $e$ is the polymatroid $\rho_{\downarrow e}$ on $E-e$
obtained by freely adding a point $x$ to $e$, then contracting $x$ and
deleting $e$.  If $e$ is a loop, then
$\rho_{\downarrow e}=\rho_{\setminus e}$.  If $\rho(e)>0$, then
compression can be defined, equivalently, by
\begin{equation}\label{eq:compression}
(\rho_{\downarrow e})(X)=
\begin{cases}
\rho(X)-1, & \text{if } X \text{ spans } e \text{ in } \rho,\\
\rho(X), & \text{otherwise,}
\end{cases}
\end{equation}
for all $X\subseteq E-e$. If $e$ is a point, then
$\rho_{\downarrow e}=\rho_{/e}$.

\begin{lemma}\label{lem: natcomp}
  For a polymatroid $(E,\rho)$, fix $e\in E$ with $\rho(e)>0$ and fix
  $e_1\in X_e$.  Let the set $Y$ consist of exactly one element in
  each set $X_f$ for which $f\ne e$ and $\rho(\{e,f\})=\rho(f)$.  Then
  the natural matroid $M_{\rho_{\downarrow e}}$ is
  $M_\rho/e_1\setminus \bigl((X_e-e_1)\cup Y\big)$.  Likewise, the
  $k$-natural matroid $M^k_{\rho_{\downarrow e}}$ is
  $M^k_\rho/e_1\setminus (X_e-e_1)$.
\end{lemma}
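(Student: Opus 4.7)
The plan is to verify both equalities by applying the characterizations in Lemmas~\ref{lemma:shownatural} and~\ref{lemma:showknatural}. Writing $N=M_\rho/e_1\setminus((X_e-e_1)\cup Y)$, I must check that each $X_f\setminus Y$ (for $f\in E-e$) is a set of clones in $N$ of the correct size $\rho_{\downarrow e}(f)$, and that $r_N(X_A\setminus Y)=\rho_{\downarrow e}(A)$ for every $A\subseteq E-e$. The size assertion is immediate from~\eqref{eq:compression} applied to a singleton. The clone assertion holds because any transposition of two elements of $X_f\setminus Y$ is an automorphism of $M_\rho$ fixing $\{e_1\}\cup(X_e-e_1)\cup Y$ pointwise, and so restricts to an automorphism of $N$.

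For the rank identity I use $r_N(X_A\setminus Y)=r_{M_\rho}\bigl((X_A\setminus Y)\cup\{e_1\}\bigr)-1$, and split on whether $A$ spans $e$ in $\rho$. When $A$ does not span $e$, the set $Y\cap X_A$ is empty: any $f\in A$ with $\rho(\{e,f\})=\rho(f)$ would force $X_e\subseteq\cl_{M_\rho}(X_f)\subseteq\cl_{M_\rho}(X_A)$, contradicting $e\notin\cl_\rho(A)$. Consequently $(X_A\setminus Y)\cup\{e_1\}=X_A\cup\{e_1\}$ has rank $\rho(A)+1$ (since $e_1\notin\cl(X_A)$), and $r_N(X_A\setminus Y)=\rho(A)=\rho_{\downarrow e}(A)$, matching~\eqref{eq:compression}.

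The case when $A$ spans $e$ is the main substantive step. Here $r_{M_\rho}(X_A\cup\{e_1\})=\rho(A)$, which gives the upper bound $r_N(X_A\setminus Y)\le\rho(A)-1=\rho_{\downarrow e}(A)$. For the lower bound it suffices to show that each removed element $f_i\in Y\cap X_f$ (with $f\in A$, so $e\in\cl_\rho(f)$) lies in $\cl_{M_\rho}((X_f\setminus\{f_i\})\cup\{e_1\})$; then $X_A\subseteq\cl_{M_\rho}((X_A\setminus Y)\cup\{e_1\})$, forcing the rank to be $\rho(A)$. The containment $X_e\subseteq\cl(X_f)$ yields $e_1\in\cl(X_f)$, so $X_f\cup\{e_1\}$ contains a unique fundamental circuit $C$ through $e_1$. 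Every transposition within $X_f$ is an automorphism of $M_\rho$ fixing $e_1$, so it preserves $C$ and hence also $C\setminus\{e_1\}\subseteq X_f$; the only subsets of $X_f$ invariant under all such transpositions are $\emptyset$ and $X_f$, and the fact that $e_1$ is not a loop rules out the former. Thus $C=X_f\cup\{e_1\}$ is itself a circuit, so $(X_f\setminus\{f_i\})\cup\{e_1\}$ is independent of rank $\rho(f)$ and has $f_i$ in its closure.

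The $k$-natural assertion is proved by the same plan through Lemma~\ref{lemma:showknatural}, with the sets $Y_f$ (already of size $k$, so no $Y$-removal is required) as the clone families for $f\in E-e$. The two-case rank analysis transfers verbatim, since $r_{M^k_\rho}(Y_A)=\rho(A)$ gives $r_{M^k_\rho}(Y_A\cup\{e_1\})=\rho(A\cup e)$, and the clone-symmetry circuit argument is unchanged. The residual elements of $U_e$ in $M^k_\rho/e_1\setminus(X_e-e_1)$ serve as the freely added elements needed in $M^k_{\rho_{\downarrow e}}$ (they become loops when $\rho(e)<k$ in the setting of $2$-polymatroids and are absent when $\rho(e)=k$), and so do not disturb the matroid structure on $Y_{E-e}$.
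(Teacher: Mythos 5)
Your proposal is correct and follows the same route as the paper: verify via Lemmas~\ref{lemma:shownatural} and~\ref{lemma:showknatural} that the sets $X_f\setminus Y$ (resp.\ $Y_f$) are clone sets of the right sizes and that the ranks of the sets $X_A\setminus Y$ match $\rho_{\downarrow e}(A)$ — the paper simply asserts that the rank condition ``follows from the definition of $\rho_{\downarrow e}$,'' whereas you carry out that verification (the two-case analysis and the circuit argument showing $X_f\cup e_1$ is a circuit when $f$ spans $e$) in full, and your details check out. The one soft spot is your closing parenthetical about the leftover elements of $U_e$ in the $k$-natural case: when $0<\rho(e)<k$ these are genuinely surplus elements of $M^k_\rho/e_1\setminus(X_e-e_1)$ (not reusable as the freely added elements of any $Y_f$ with $f\ne e$), so the stated identity holds on the nose only when $\rho(e)=k$ — but this imprecision is already present in the lemma's statement and is harmless for its only application (Corollary~\ref{cor: natural matroid of compression}, where $\rho(e)=k=2$).
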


\begin{proof} Set
  $N_e= M_\rho/e_1\setminus ( \bigl(X_e-e_1)\cup Y\big)$ and let
  $X'_A=X_A-Y$ for all $A\subseteq E-e$.  For the first assertion, by
  Lemma \ref{lemma:shownatural} it suffices to show that
  \begin{itemize}
  \item[(i)] for each $g\in E-e$, the set $X'_g$ is a set of clones of
    $N_e$, and
  \item[(ii)] $r_{N_e}(X'_A)=\rho_{\downarrow e}(A)$ for all
    $A\subseteq E-e$.
  \end{itemize}
  Property (i) holds since $X_g$ is a set of clones of $M_\rho$ and
  clones in a matroid remain clones in each minor that contains them.
  Property (ii) follows from the definition of $\rho_{\downarrow e}$.
  The second assertion follows similarly, using Lemma
  \ref{lemma:showknatural}.
\end{proof}

If $e$ is a line in a $2$-polymatroid, then having no element span $e$
is equivalent to having no line parallel to $e$, so the lemma
specializes to the corollary below.

\begin{cor}\label{cor: natural matroid of compression}
  If $\rho$ is a $2$-polymatroid and $e$ is a line of $\rho$, then
  $M^2_{\rho_{\downarrow e}}=M^2_\rho/e_1\setminus e_2$.  If, in
  addition, no line of $\rho$ is parallel to $e$, then
  $M_{\rho_{\downarrow e}}=M_\rho/e_1\setminus e_2$.
\end{cor}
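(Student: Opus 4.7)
The plan is to derive this corollary as a direct specialization of Lemma \ref{lem: natcomp} to the setting where $\rho$ is a $2$-polymatroid and $e$ is a line. In that context $\rho(e)=2$, so $X_e$ has exactly two elements, namely $e_1$ and $e_2$. The second statement of Lemma \ref{lem: natcomp}, applied with $k=2$, then reads $M^2_{\rho_{\downarrow e}}=M^2_\rho/e_1\setminus(X_e-e_1)=M^2_\rho/e_1\setminus e_2$, which is the first assertion of the corollary. This part is essentially a substitution and requires no further work.

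For the second assertion, I would apply the first statement of Lemma \ref{lem: natcomp}, giving $M_{\rho_{\downarrow e}}=M_\rho/e_1\setminus\bigl((X_e-e_1)\cup Y\bigr)=M_\rho/e_1\setminus\bigl(\{e_2\}\cup Y\bigr)$, where $Y$ picks one element from each $X_f$, with $f\ne e$, satisfying $\rho(\{e,f\})=\rho(f)$. The only real content is to show that the hypothesis ``no line of $\rho$ is parallel to $e$'' forces $Y=\emptyset$. Fix any such candidate $f$. Since $\rho$ is nondecreasing, $\rho(f)=\rho(\{e,f\})\geq \rho(e)=2$, and since $\rho$ is a $2$-polymatroid, $\rho(f)\leq 2$; hence $\rho(f)=2$, so $f$ is a line. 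Then $0<\rho(e)=\rho(f)=\rho(\{e,f\})=2$, which by the definition in Section \ref{section:background} says that $e$ and $f$ are parallel lines, contradicting the hypothesis. Therefore no such $f$ exists, $Y=\emptyset$, and $M_{\rho_{\downarrow e}}=M_\rho/e_1\setminus e_2$, as required.

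I do not expect a genuine obstacle: all substantive work lies in Lemma \ref{lem: natcomp}, and the corollary amounts to unpacking the bookkeeping sets $X_e-e_1$ and $Y$ under the hypotheses that $e$ is a line and, for the second part, that no line of $\rho$ is parallel to $e$. The only point needing even minor care is matching the condition $\rho(\{e,f\})=\rho(f)$ appearing in the lemma against the explicit definition of parallel elements; this is immediate once one observes that under the $2$-polymatroid hypothesis the only $f\ne e$ that can realize $\rho(\{e,f\})=\rho(f)$ with $\rho(e)=2$ is precisely a line parallel to $e$.
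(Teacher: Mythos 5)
Your proposal is correct and matches the paper's approach exactly: the paper derives the corollary by the same specialization of Lemma \ref{lem: natcomp}, noting in the sentence preceding the corollary that for a line $e$ in a $2$-polymatroid, ``no element spans $e$'' is equivalent to ``no line is parallel to $e$,'' which is precisely the observation you verify to show $Y=\emptyset$.
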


We next give an instance where compression and taking the $2$-dual
commute.

\begin{lemma}\label{lem:commute}
  Let $(E,\rho)$ be a $2$-polymatroid.  For $e\in E$, if
  $\rho(e)=2=\rho^*(e)$, then
  $(\rho_{\downarrow e})^* = (\rho^*)_{\downarrow e}$.
\end{lemma}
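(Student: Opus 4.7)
The plan is to verify the claimed identity pointwise by direct computation, using the formula $\rho^*(Y)=2|Y|-\rho(E)+\rho(E-Y)$ together with equation (\ref{eq:compression}) on both sides. First I would unpack the hypothesis. Substituting $Y=\{e\}$ into the formula for the $2$-dual gives $\rho^*(e)=2-\rho(E)+\rho(E-e)$, so $\rho^*(e)=2$ is equivalent to $\rho(E-e)=\rho(E)$, i.e.\ $E-e$ spans $e$ in $\rho$. A parallel direct calculation (using $\rho^*(E-e)-\rho^*(E)=\rho(e)-2$) shows that $\rho(e)=2$ is equivalent to $E-e$ spanning $e$ in $\rho^*$. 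Consequently both compressions lower the total rank by one, giving $(\rho_{\downarrow e})(E-e)=\rho(E)-1$ and $((\rho^*)_{\downarrow e})(E-e)=\rho^*(E)-1$. I would also note that $\rho_{\downarrow e}$ is again a $2$-polymatroid (its values never exceed those of $\rho$), so $(\rho_{\downarrow e})^*$ is defined.

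Next, I fix $X\subseteq E-e$ and expand both sides. Using the dual formula and the preceding observation,
$$(\rho_{\downarrow e})^*(X)=2|X|-\rho(E)+1+\rho((E-e)-X)-\epsilon,$$
where $\epsilon=1$ precisely when $(E-e)-X$ spans $e$ in $\rho$, i.e.\ when $\rho(E-X)=\rho((E-e)-X)$. On the other side, since $\rho^*(X)=2|X|-\rho(E)+\rho(E-X)$ and $\rho^*(e)>0$,
$$((\rho^*)_{\downarrow e})(X)=2|X|-\rho(E)+\rho(E-X)-\epsilon',$$
where $\epsilon'=1$ precisely when $X$ spans $e$ in $\rho^*$. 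A short calculation from the definition of $\rho^*$ (comparing $\rho^*(X\cup e)$ with $\rho^*(X)$) rewrites this last condition as $\rho(E-X)-\rho((E-e)-X)=2$.

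The claim therefore reduces to the arithmetic identity $1+\rho((E-e)-X)-\epsilon=\rho(E-X)-\epsilon'$. Setting $\delta:=\rho(E-X)-\rho((E-e)-X)$, which lies in $\{0,1,2\}$ since $\rho(e)=2$, the three cases are immediate: $\delta=0$ forces $(\epsilon,\epsilon')=(1,0)$; $\delta=1$ forces $(\epsilon,\epsilon')=(0,0)$; and $\delta=2$ forces $(\epsilon,\epsilon')=(0,1)$. In each case $\delta=1+\epsilon'-\epsilon$, completing the verification. The only nontrivial step—and hence the main obstacle—is the translation of the spanning condition for $\rho^*$ into the equation $\delta=2$; once that dictionary is established, the case check is routine bookkeeping.
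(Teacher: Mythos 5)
Your proof is correct, but it takes a genuinely different route from the paper's. The paper derives the identity in one chain of equalities from the machinery it has already built: by Lemma \ref{lemma:dual of k-natural}(3) the $2$-natural matroid satisfies $(M^2_\rho)^*=M^2_{\rho^*}$, by Corollary \ref{cor: natural matroid of compression} compression of a line corresponds to $M^2_\rho/e_1\setminus e_2$, and since a polymatroid is determined by its (labeled) $2$-natural matroid, the equality $M^2_{(\rho_{\downarrow e})^*}=M^2_{(\rho^*)_{\downarrow e}}$ suffices. You instead verify the identity pointwise on the rank functions, using only the $2$-dual formula and Equation (\ref{eq:compression}); your reduction to the bookkeeping identity $\delta=1+\epsilon'-\epsilon$ with $\delta=\rho(E-X)-\rho((E-e)-X)\in\{0,1,2\}$ checks out, and your preliminary observations (that $\rho^*(e)=2$ is equivalent to $E-e$ spanning $e$ in $\rho$, and symmetrically, so both compressions drop the total rank by exactly one) are exactly where the hypotheses enter. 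What your approach buys is self-containedness and transparency about the role of the hypotheses; what the paper's approach buys is brevity and a reinforcement of the paper's theme that the $k$-natural matroid is the construction compatible with $k$-duality, at the cost of invoking two prior results. One small point worth making explicit in your write-up: the $2$-dual of $\rho_{\downarrow e}$ is computed on the ground set $E-e$, so the formula you expand is $(\rho_{\downarrow e})^*(X)=2|X|-(\rho_{\downarrow e})(E-e)+(\rho_{\downarrow e})((E-e)-X)$; you use this correctly but it deserves a sentence.
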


\begin{proof}
  It suffices to show that
  $M^2_{(\rho_{\downarrow e})^*} = M^2_{(\rho^*)_{\downarrow e}}$, and
  that holds since, by Lemma \ref{lemma:dual of k-natural} and
  Corollary \ref{cor: natural matroid of compression},
  \begin{align*}
    M^2_{(\rho_{\downarrow e})^*}
    & = (M^2_{\rho_{\downarrow e}})^*\\
    & = (M^2_\rho /e_1\setminus e_2)^*\\
    & = (M^2_\rho)^* \setminus e_1/ e_2 \\
    & = M^2_{\rho^*} \setminus e_1/ e_2 \\
    & =  M^2_{(\rho^*)_{\downarrow e}}.\qedhere
  \end{align*}
    
\end{proof}

In Section \ref{sec:classes} and Figure \ref{fig:natural}, we saw the
excluded minor $S_2$ for $\mathcal{P}_{U_{2,4}}$.  This is the first
in an infinite sequence of excluded minors that we define next and
show are related by compression.  For $n\geq 2$, let $S_n$ be
$(E, \rho)$ where $|E|=n$ and
\begin{equation}\label{eq:spikelike}
  \rho(X) =
  \begin{cases}
    |X|+1, & \text{if } 0<|X|<n,\\
    |X|, & \text{otherwise.}
  \end{cases}
\end{equation}
It is easy to check that $S_n$ is its own $2$-dual, that is,
$S^*_n=S_n$.  For $n>2$, the natural matroid of $S_n$ is the tipless,
free $n$-spike.

\begin{example}\label{ex: spikelike compression}
  For $n\geq 2$, let $S_{n+1}=(E, \rho)$ and let $e\in E$.  Consider
  the compression $(S_{n+1})_{\downarrow e}$.  Certainly
  $\rho_{\downarrow e}(\emptyset)=0$.  The only subset of $E-e$ that
  spans $e$ is $E-e$ itself, so $\rho_{\downarrow e}(X)=\rho(X)=|X|+1$
  for $\emptyset \subset X\subset E-e$, and
  $\rho_{\downarrow e}(E-e)=\rho(E-e)-1=|E-e|$.  Hence,
  $(S_{n+1})_{\downarrow e}$ is isomorphic to $S_n$.
\end{example}

Using Corollary \ref{cor: natural matroid of compression}, we show
that $S_n$, for $n\geq 2$, is an excluded minor for
$\mathcal{P}_{U_{2,4}}$.

\begin{prop}\label{prop:spikelike}
  For $n\geq 2$, the $2$-polymatroid $S_n$ is an excluded minor for
  $\mathcal{P}_{U_{2,4}}$.
\end{prop}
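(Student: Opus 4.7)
The plan is to verify the two defining properties of an excluded minor for $\mathcal{P}_{U_{2,4}}$: (i) $S_n \notin \mathcal{P}_{U_{2,4}}$, and (ii) every proper minor of $S_n$ belongs to $\mathcal{P}_{U_{2,4}}$. Because the rank function in (\ref{eq:spikelike}) is invariant under permutations of $E$, every element of $S_n$ plays the same role, so for (ii) it suffices to inspect $S_n \setminus e$ and $S_n / e$ for a single fixed $e\in E$.

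For (i), I would handle $n = 2$ directly and then induct. The natural matroid $M_{S_2}$ has four elements forming two pairs of clones, each of rank $2$, with $r(X_E) = \rho(E) = 2$; hence every pair of elements is independent and $M_{S_2} = U_{2,4}$. For the inductive step I would invoke the compression identity $(S_{n+1})_{\downarrow e} \cong S_n$ from Example \ref{ex: spikelike compression}. Because no two lines of $S_{n+1}$ are parallel (pairwise rank is $3$, not $2$), Corollary \ref{cor: natural matroid of compression} gives $M_{S_n} = M_{S_{n+1}}/e_1 \setminus e_2$, exhibiting $M_{S_n}$ as a minor of $M_{S_{n+1}}$. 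Induction then propagates a $U_{2,4}$-minor from $M_{S_2}$ up to every $M_{S_n}$.

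For (ii), I would compute the two reductions directly from (\ref{eq:spikelike}). A quick calculation gives $\rho_{/e}(A) = |A|$ for $\emptyset \neq A \subsetneq E-e$ and $\rho_{/e}(E-e) = n-2$, so $S_n/e$ is the matroid $U_{n-2,n-1}$, which has a single circuit and is $\mathbb{F}_2$-representable. For $S_n \setminus e$ the formula reduces to $\rho_{\setminus e}(A) = |A| + 1$ on nonempty sets, describing $n-1$ lines through a common direction. I would identify its natural matroid with the cycle matroid $M(K_{2,n-1})$ via Lemma \ref{lemma:shownatural}: choose spine vertices $u,v$ and page vertices $w_1, \ldots, w_{n-1}$, set $X_{L_i} = \{uw_i, vw_i\}$, observe that transposing $uw_i$ and $vw_i$ fixes every cycle of $K_{2,n-1}$ (each $4$-cycle contains both or neither), so each $X_{L_i}$ is a set of clones, and note that for nonempty $A$ the edges $X_A$ span the connected subgraph on $\{u,v\} \cup \{w_i : L_i \in A\}$, giving rank $|A|+1$. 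Since graphic matroids are binary, $S_n \setminus e \in \mathcal{P}_{U_{2,4}}$.

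The main obstacle is the identification $M_{S_n \setminus e} \cong M(K_{2,n-1})$; the clone verification is quick once one notices that $\{uw_i, vw_i\}$ is a $2$-edge cocircuit of $K_{2,n-1}$, and the rank calculation is a spanning-tree count. An equally clean alternative avoids graphs: by Lemma \ref{lemma:shownatural}, the natural matroid is also realized by the columns $\{e_i, e_1 + e_i\}_{i=2}^{n}$ of $\mathbb{F}_2^{n}$ with $X_{L_{i-1}}$ the $i$-th pair, so $M_{S_n \setminus e}$ is $\mathbb{F}_2$-representable and the verification reduces to a direct rank computation in this representation.
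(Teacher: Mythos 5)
Your proof is correct, and much of it coincides with the paper's: the base case $M_{S_2}=U_{2,4}$, the induction via $(S_{n+1})_{\downarrow e}\cong S_n$ together with Corollary \ref{cor: natural matroid of compression} to propagate a $U_{2,4}$-minor (including the needed check that no two lines of $S_{n+1}$ are parallel), and the identification $(S_n)_{/e}=U_{n-2,n-1}$. Where you genuinely diverge is the deletion step. The paper disposes of $(S_n)_{\setminus e}$ in one line using self-$2$-duality: $(S_n)_{\setminus e}=\bigl((S_n^*)_{/e}\bigr)^*=\bigl((S_n)_{/e}\bigr)^*$, which lies in $\mathcal{P}_{U_{2,4}}$ by Theorem \ref{thm:dual closure of P} because the contraction does. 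You instead compute $\rho_{\setminus e}$ explicitly and verify, via Lemma \ref{lemma:shownatural}, that its natural matroid is $M(K_{2,n-1})$; this is exactly the matroid the paper describes, but neither proves nor uses, in the remark following its proof (the parallel connection of $n-1$ triangles at a common basepoint, with the basepoint deleted, is $K_{2,n-1}$ plus that edge, so deleting the edge gives $K_{2,n-1}$). The duality route is shorter and computation-free but leans on the machinery of Section \ref{section:k-natural}; your route is more elementary and self-contained, avoids both Theorem \ref{thm:dual closure of P} and the fact that $S_n^*=S_n$, and yields the explicit structure of $M_{(S_n)_{\setminus e}}$ (and a binary representation) as a by-product.
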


\begin{proof}
  Now $S_2$ is an excluded minor since $M_{S_2}$ is $U_{2,4}$.  Since
  $(S_n)_{\downarrow e}=S_{n-1}$ for all $e\in S_n$, by induction and
  Corollary \ref{cor: natural matroid of compression}, each $M_{S_n}$
  has a $U_{2,4}$-minor.  To see that all proper minors of $S_n$ are
  binary-natural, take $e\in S_n$.  The contraction $(S_n)_{/e}$ is
  the binary matroid $U_{n-2,n-1}$, and
  $(S_n)_{\setminus e}=\bigl((S_n^*)_{/e}\bigr)^*
  =\bigl((S_n)_{/e}\bigr)^* $, which is in $\mathcal{P}_{U_{2,4}}$
  by Theorem \ref{thm:dual closure of P}.
\end{proof}	

We note that the natural matroid of $(S_n)_{\setminus e}$ is the
parallel connection of $n-1$ copies of $U_{2,3}$ at a common base
point, with that point deleted.

The next lemma, which plays key roles in both Sections
\ref{section:main theorem} and \ref{section:K_4}, simplifies the
problem of identifying excluded minors by allowing us to consider
compression separately.  For concreteness, we first discuss the result
and its significance for $\mathcal{P}_{U_{2,4}}$.  In that case, this
lemma says that, for any excluded minor $\rho$ for
$\mathcal{P}_{U_{2,4}}$ and line $e$ of $\rho$, the compression
$\rho_{\downarrow e}$ is also an excluded minor if and only if
$\rho_{\downarrow e}\not\in\mathcal{P}_{U_{2,4}}$.  Let
$\mathcal{R}_{U_{2,4}}$ be the set of excluded minors $\rho$ for
$\mathcal{P}_{U_{2,4}}$ for which
$\rho_{\downarrow e}\in\mathcal{P}_{U_{2,4}}$ for each line $e$ of
$\rho$.  It follows that if $\rho' \not \in \mathcal{R}_{U_{2,4}}$ is
an excluded minor for $\mathcal{P}_{U_{2,4}}$, then some sequence of
compressions of lines starting with $\rho'$ yields an excluded minor
in $\mathcal{R}_{U_{2,4}}$, and all polymatroids in that sequence are
excluded minors.  This gives a two-part strategy for finding the
excluded minors for $\mathcal{P}_{U_{2,4}}$.  First we find
$\mathcal{R}_{U_{2,4}}$.  Then, for each
$\rho\in\mathcal{R}_{U_{2,4}}$, we find all excluded minors $\rho'$
for $\mathcal{P}_{U_{2,4}}$ for which $\rho'_{\downarrow e}=\rho$ for
some line $e$ of $\rho'$.  We then repeat this step on all new
excluded minors that we find until this process terminates or we have
characterized all infinite families of excluded minors for
$\mathcal{P}_{U_{2,4}}$ obtained from this process.

\begin{lemma}\label{lemma: compression of excluded minor}
  Let $\mathcal{C}$ be a minor-closed class of matroids and
  $\mathcal{C}'_2$ be the class of $2$-polymatroids whose natural
  matroids are in $\mathcal{C}$.  Let $(E,\rho)$ be an excluded minor
  for $\mathcal{C}'_2$ and fix $e\in E$ with $\rho(e)=2$. Then
  $\rho_{\downarrow e}$ is an excluded minor for $\mathcal{C}'_2$ if
  and only if $\rho_{\downarrow e}\not\in\mathcal{C}'_2$.
\end{lemma}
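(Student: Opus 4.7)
The plan is to handle both directions separately. The ``only if'' direction is immediate from the definition of excluded minor, so all the work goes into the converse: assuming $\rho_{\downarrow e}\not\in\mathcal{C}'_2$, I need to show every proper minor of $\rho_{\downarrow e}$ is in $\mathcal{C}'_2$.

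The central technical step I would establish first is the commutation identity
\[
(\rho_{\downarrow e})_{\setminus X/Y} \;=\; (\rho_{\setminus X/Y})_{\downarrow e}
\]
for all disjoint $X,Y\subseteq E-e$. It suffices to check this for a single deletion and for a single contraction of some $f\in E-e$ and then iterate. Using the rank formula \eqref{eq:compression}, the check reduces to comparing the condition ``$A$ spans $e$'' in $\rho$ with its analogue in $\rho_{\setminus f}$ and in $\rho_{/f}$ for subsets $A\subseteq E-e-f$; both comparisons are routine once one tracks the cases in which deletion or contraction of $f$ does or does not change the status of spanning $e$.

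Granted the identity, every proper minor of $\rho_{\downarrow e}$ can be written as $\sigma_{\downarrow e}$, where $\sigma:=\rho_{\setminus X/Y}$ is a proper minor of $\rho$. Since $\rho$ is an excluded minor, $\sigma\in\mathcal{C}'_2$, and it remains to show $\sigma_{\downarrow e}\in\mathcal{C}'_2$. I would split into cases on $\sigma(e)\in\{0,1,2\}$, noting that the rank of $e$ may drop in the minor even though $\rho(e)=2$. If $\sigma(e)=0$ then $\sigma_{\downarrow e}=\sigma_{\setminus e}$, and if $\sigma(e)=1$ then $\sigma_{\downarrow e}=\sigma_{/e}$, so in either subcase $\sigma_{\downarrow e}$ is a minor of $\sigma$ and thus in the minor-closed class $\mathcal{C}'_2$. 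If $\sigma(e)=2$, Lemma~\ref{lem: natcomp} identifies $M_{\sigma_{\downarrow e}}$ as $M_\sigma/e_1\setminus((X_e-e_1)\cup Y')$ for an explicit set $Y'$, so $M_{\sigma_{\downarrow e}}$ is a minor of $M_\sigma\in\mathcal{C}$; minor-closure of $\mathcal{C}$ then gives $M_{\sigma_{\downarrow e}}\in\mathcal{C}$ and hence $\sigma_{\downarrow e}\in\mathcal{C}'_2$.

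The main obstacle is the commutation identity, because compression is a \emph{conditional} rank operation: the condition ``$A$ spans $e$'' can be inherited, created, or destroyed when passing to a minor, and a clean verification requires careful bookkeeping of these subcases. The subsequent case split on $\sigma(e)$ is comparatively easy, since the fallback definitions of compression for loops and points make the low-rank cases automatic.
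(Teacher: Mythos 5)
Your argument is correct, but it takes a genuinely different route from the paper's. You first establish that compression commutes with deletion and contraction of elements other than $e$ at the level of rank functions, and then reduce everything to the single fact that $\mathcal{C}'_2$ is closed under compression, which is immediate from Lemma \ref{lem: natcomp} (the natural matroid of a compression is a minor of the natural matroid). The commutation identity does hold: for deletion it is trivial from Equation (\ref{eq:compression}), and for contraction of $f$ the only delicate case is when $f$ spans $e$, where $e$ becomes a loop of $\rho_{/f}$ and the convention $\tau_{\downarrow e}=\tau_{\setminus e}$ for loops is exactly what makes the two sides agree; in the iterated version you must also allow $e$ to have rank $0$ or $1$ in intermediate minors, which your case split on $\sigma(e)$ anticipates. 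The paper instead avoids the commutation identity altogether: it first shows that no line of $\rho$ is parallel to $e$ (a clone argument in $M_\rho$ that uses the hypothesis that $\rho$ is an excluded minor), so that Corollary \ref{cor: natural matroid of compression} gives $M_{\rho_{\downarrow e}}=M_\rho/e_1\setminus e_2$ exactly, and then checks only the single-element minors $(\rho_{\downarrow e})_{\setminus f}$ and $(\rho_{\downarrow e})_{/f}$ directly against $M_{\rho_{\setminus f}}$ and $M_{\rho_{/f}}$ via Lemma \ref{lem:naturalminor}, which suffices because $\mathcal{C}'_2$ is minor closed. Your route buys a cleaner conceptual statement --- $\mathcal{C}'_2$ is closed under compression and compression commutes with taking minors, which is essentially the c-minor closure implicit in the corollary at the end of Section \ref{section:main theorem} --- while the paper's route never has to verify the full commutation identity, at the cost of the parallel-line step and a small rank inequality in the contraction case.
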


\begin{proof}
  One implication is immediate.  For the converse, assume that
  $\rho_{\downarrow e}\not\in\mathcal{C}'_2$.  We claim that no line
  of $\rho$ is parallel to $e$.  Assume, to the contrary, that the
  line $f$ is parallel to $e$.  Let $X_e=\{e_1,e_2\}$ and
  $X_f=\{f_1,f_2\}$.  Now $e_1,e_2, f_1,f_2$ are clones in $M_\rho$.
  By Lemma \ref{lem: natcomp}, we get $M_{\rho_{\downarrow e}}$ from
  $M_\rho$ by contracting $e_1$ and deleting $e_2$, $f_1$, and perhaps
  more elements.  In particular,
  $M_{\rho}\setminus\{e_2,f_1\}\not\in\mathcal{C}$.  Since $e_1$ and
  $f_1$ are clones in $M_\rho$, this implies that
  $M_{\rho}\setminus\{e_1,e_2\}\not\in\mathcal{C}$, and so
  $\rho_{\setminus e}\not\in\mathcal{C}$, which contradicts $\rho$
  being an excluded minor.  Thus, no line of $\rho$ is parallel to
  $e$.
   
  Fix $f\in E-e$.  To show that $(\rho_{\downarrow e})_{\setminus f}$
  is in $\mathcal{C}'_2$, it suffices to show that
  $M_{(\rho_{\downarrow e})_{\setminus f}}$ is a minor of
  $M_{\rho_{\setminus f}}$, which is in $\mathcal{C}$, and that holds
  since, by Lemma \ref{lem:naturalminor} and Corollary \ref{cor:
    natural matroid of compression},
  \begin{align*}
    M_{(\rho_{\downarrow e})_{\setminus f}}  
    & = M_{(\rho_{\downarrow e})} \setminus X_f \\
    & = M_\rho / e_1\setminus e_2\setminus X_f \\
    & = M_{\rho_{\setminus f}}/ e_1\setminus e_2.
  \end{align*}
  We next show that $(\rho_{\downarrow e})_{/f}$ is in
  $\mathcal{C}'_2$.  Set $E'=E-\{e,f\}$.  Lemma \ref{lem:naturalminor}
  and Corollary \ref{cor: natural matroid of compression} give
  $$M_{(\rho_{\downarrow e})_{/f}} = \bigl(M_\rho/(X_f\cup
  e_1)\bigr)|U$$ where $U\subseteq X_{E'}$ and
  $|U\cap X_g|= (\rho_{\downarrow e})_{/f}(g)$ for all $g\in E'$.   Likewise,
  $$M_{\rho_{/f}}/e_1\setminus e_2 =\bigl( M_\rho/(X_f\cup
  e_1)\bigr)|V$$ where $V\subseteq X_{E'}$ and
  $|V\cap X_g|= \rho_{/f}(g)$ for all $g\in E'$.  Thus, it will follow
  that $M_{(\rho_{\downarrow e})_{/f}}$ is isomorphic to a minor of
  $M_{\rho_{/f}}$, which is in $\mathcal{C}$ and so gives
  $M_{(\rho_{\downarrow e})_{/f}}\in\mathcal{C}$, if we show that
  $(\rho_{\downarrow e})_{/f}(g)\leq \rho_{/f}(g)$, that is,
  $\rho_{\downarrow e}(\{f,g\})-\rho_{\downarrow e}(f) \leq
  \rho(\{f,g\})- \rho(f)$, for all $g\in E'$.  This holds since
  $\rho_{\downarrow e}(\{f,g\}) \leq \rho(\{f,g\})$ and
  $\rho_{\downarrow e}(f)=\rho(f)$ since $f$ is not parallel to $e$.
  Thus, all proper minors of $\rho_{\downarrow e}$ are in
  $\mathcal{C}'_2$, so $\rho_{\downarrow e}$ is an excluded minor for
  $\mathcal{C}'_2$.
\end{proof}

Let $\mathcal{C}$ be a minor-closed, dual-closed class of matroids,
all excluded minors for which are simple.  Let $\mathcal{C}'_2$ be the
class of $2$-polymatroids whose natural matroids are in $\mathcal{C}$,
and let $\mathcal{R}_{\mathcal{C}'_2}$ be the set of excluded minors
$(E,\rho)$ for $\mathcal{C}'_2$ such that
$\rho_{\downarrow e}\in\mathcal{C}'_2$ for all $e\in E$.  We next show
that $\mathcal{R}_{\mathcal{C}'_2}$ is closed under $2$-duality.  In
particular, this applies to the three classes of $2$-polymatroids
discussed in Section \ref{sec:classes}.

\begin{lemma}\label{lemma:R 2-duality}
  If $\mathcal{C}$, $\mathcal{C}'_2$, and
  $\mathcal{R}_{\mathcal{C}'_2}$ are as above, then the set
  $\mathcal{R}_{\mathcal{C}'_2}$ is closed under $2$-duality.
\end{lemma}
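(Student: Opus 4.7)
The plan is to take $\rho \in \mathcal{R}_{\mathcal{C}'_2}$ on ground set $E$ and verify that its $2$-dual $\rho^*$ also belongs to $\mathcal{R}_{\mathcal{C}'_2}$. By Theorem~\ref{thm:dual closure of P}, $\rho^*$ is automatically an excluded minor for $\mathcal{C}'_2$, so what remains is to check that $(\rho^*)_{\downarrow e} \in \mathcal{C}'_2$ for every $e \in E$. I would split into cases based on the values of $\rho^*(e)$ and $\rho(e)$.

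First I would dispatch the easy cases. If $\rho^*(e) < 2$, then $(\rho^*)_{\downarrow e}$ equals $(\rho^*)_{\setminus e}$ (when $e$ is a loop of $\rho^*$) or $(\rho^*)_{/e}$ (when $e$ is a point of $\rho^*$); either way it is a proper minor of $\rho^*$ and so lies in $\mathcal{C}'_2$. If $\rho^*(e) = 2$ and $\rho(e) = 2$, then Lemma~\ref{lem:commute} gives $(\rho^*)_{\downarrow e} = (\rho_{\downarrow e})^*$; since $\rho \in \mathcal{R}_{\mathcal{C}'_2}$ forces $\rho_{\downarrow e} \in \mathcal{C}'_2$, Theorem~\ref{thm:dual closure of P} then yields $(\rho_{\downarrow e})^* \in \mathcal{C}'_2$.

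The remaining subcase, $\rho^*(e) = 2$ with $\rho(e) \leq 1$, is where Lemma~\ref{lem:commute} cannot be invoked, and I expect this to be the main obstacle. The idea is to show that in this configuration compression collapses to plain deletion, i.e., $(\rho^*)_{\downarrow e} = (\rho^*)_{\setminus e}$. Expanding the definition of the $2$-dual, for $X \subseteq E-e$ one computes $\rho^*(X \cup e) - \rho^*(X) = 2 + \rho(E-X-e) - \rho(E-X)$, so $X$ spans $e$ in $\rho^*$ precisely when $\rho(E-X) - \rho(E-X-e) = 2$. But submodularity of $\rho$, applied to $E-X-e$ and $\{e\}$, gives $\rho(E-X) - \rho(E-X-e) \leq \rho(e) \leq 1$, so no $X \subseteq E-e$ spans $e$ in $\rho^*$. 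By the definition \eqref{eq:compression} of compression, this forces $(\rho^*)_{\downarrow e} = (\rho^*)_{\setminus e}$, which lies in $\mathcal{C}'_2$ as a proper minor of the excluded minor $\rho^*$. Combining the three cases, $\rho^* \in \mathcal{R}_{\mathcal{C}'_2}$, as required.
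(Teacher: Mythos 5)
Your proposal is correct and follows essentially the same route as the paper: invoke Theorem~\ref{thm:dual closure of P} to get that $\rho^*$ is an excluded minor, use Lemma~\ref{lem:commute} when $\rho(e)=\rho^*(e)=2$, and otherwise observe that compression degenerates to deletion (or contraction). Your case split on $\rho(e)\le 1$ versus $\rho(e)=2$ is the same dichotomy the paper phrases as $\rho^*(E-e)<\rho^*(E)$ versus $\rho^*(E-e)=\rho^*(E)$, and your submodularity computation just makes explicit the paper's observation that no set spans $e$ in $\rho^*$ in that case.
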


\begin{proof}
  Let $(E,\rho)$ be in $\mathcal{R}_{\mathcal{C}'_2}$.  By Theorem
  \ref{thm:dual closure of P}, its $2$-dual $\rho^*$ is an excluded
  minor for $\mathcal{C}'_2$.  To show that
  $\rho^*\in\mathcal{R}_{\mathcal{C}'_2}$, consider $e\in E$ with
  $\rho^*(e)=2$.  If $\rho^*(E-e)<\rho^*(E)$, then
  $(\rho^*)_{\downarrow e}=(\rho^*)_{\setminus e}$, and so
  $(\rho^*)_{\downarrow e} \in \mathcal{C}'_2$.  Now assume that
  $\rho^*(E- e)=\rho^*(E)$.  Thus, $\rho(e)=2$, so Lemma
  \ref{lem:commute} gives
  $ (\rho^*)_{\downarrow e}=(\rho_{\downarrow e})^*$.  Since
  $\rho_{\downarrow e} \in \mathcal{C}'_2$ (since
  $\rho\in\mathcal{R}_{\mathcal{C}'_2}$), we get
  $(\rho^*)_{\downarrow e} \in \mathcal{C}'_2$.  Thus,
  $\rho^*\in\mathcal{R}_{\mathcal{C}'_2}$.
\end{proof}

\section{The excluded minors for the class of binary-natural
  $2$-polymatroids}\label{section:main theorem}

We give the excluded minors for the class $\mathcal{P}_{U_{2,4}}$ of
binary-natural $2$-polymatroids and prove that the list is complete.
Some of the excluded minors that we have not yet discussed are
illustrated in Figure \ref{figure: excluded minors}.  Each excluded
minor is connected and contains no loops, no parallel points, and,
except for $S_2$, no parallel lines.

\begin{thm}\label{thm: main theorem}
  The excluded minors for $\mathcal{P}_{U_{2,4}}$ are
  \begin{itemize}
  \item $U_{2,4}$;
  \item $L_2$: a line with two points on it;
  \item $S_n$, for $n\geq 2$: these are defined before, and treated
    in, Proposition \ref{prop:spikelike};
  \item $A_3$: two coplanar lines with a point on exactly one of them;
  \item $B_3$: in rank $3$, a line and three points, none on the line,
    having a $U_{2,3}$-restriction;
  \item $A_4$: the $2$-dual of $L_2$; three lines in rank $4$, exactly
    one pair of which is skew;
  \item $B_4$: two skew lines and two points in rank $4$ where each
    line is coplanar with the pair of points but spans neither;
  \item $A_5$: the $2$-dual of $B_3$; three lines and a point in rank
    $5$ where the point does not lie on any line, and each pair of
    lines is skew and spans the point; and
  \item $A_6$: the $2$-dual of $U_{2,4}$; four lines in rank $6$ with
    $\rho(X)=|X|+2$ if $|X|\in\{2,3\}$.
  \end{itemize}
  Also, all excluded minors except $S_n$, for $n\geq 3$, belong to
  $\mathcal{R}_{U_{2,4}}$, the set of excluded minors $\rho$ for which
  $\rho_{\downarrow e}\in\mathcal{P}_{U_{2,4}}$ for each line $e$ of
  $\rho$.
\end{thm}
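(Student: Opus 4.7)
The plan is to execute the two-phase strategy set up in Section~\ref{section:compression}. First, I would verify that every polymatroid on the list is indeed an excluded minor for $\mathcal{P}_{U_{2,4}}$. Then I would prove completeness by invoking Lemma~\ref{lemma: compression of excluded minor}: any excluded minor either lies in $\mathcal{R}_{U_{2,4}}$ or has a line whose compression is again an excluded minor. So completeness reduces to (a) determining $\mathcal{R}_{U_{2,4}}$ exactly, and (b) classifying the polymatroids obtained from members of $\mathcal{R}_{U_{2,4}}$ by successive ``inverse compressions'' that remain excluded minors.

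For the verification phase, Proposition~\ref{prop:spikelike} already handles $S_n$ for all $n\geq 2$, and $U_{2,4}$ is trivial. Theorem~\ref{thm:dual closure of P} tells us that excluded minors come in $2$-dual pairs, which halves the remaining work: $A_4=L_2^*$, $A_5=B_3^*$, and $A_6=U_{2,4}^*$ are on the list explicitly, while $A_3$ and $B_4$ can be checked to be self-dual. Hence it suffices to verify $L_2$, $A_3$, $B_3$, and $B_4$ directly. In each case I would exhibit a $U_{2,4}$-minor of $M_\rho$ via four coplanar points in general position (using the natural-matroid picture), and then apply Lemma~\ref{lem:naturalminor} to confirm that $M_{\rho\setminus f}$ and $M_{\rho/f}$ are binary for every element $f$; again Theorem~\ref{thm:dual closure of P} cuts this work in half.

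For completeness, the first substep is to show $\mathcal{R}_{U_{2,4}}=\{U_{2,4},L_2,S_2,A_3,B_3,A_4,B_4,A_5,A_6\}$ by a finite case analysis. The defining property — $M_\rho$ has a $U_{2,4}$-minor, but by Corollary~\ref{cor: natural matroid of compression} every line-compression $M^2_\rho/e_1\setminus e_2$ loses all $U_{2,4}$-minors — is a stringent local condition that rigidly constrains how a $U_{2,4}$-minor can be placed among the clones $X_e$. I would bound $\rho(E)$ and $|E|$ (the $U_{2,4}$-minor uses four elements, each forcing structural commitments), split on the number of points and lines, and enumerate the surviving configurations; Lemma~\ref{lemma:R 2-duality} halves the casework by letting me pair each candidate with its $2$-dual. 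The main obstacle is making this case analysis both clean and exhaustive without proliferating subcases.

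The second substep is the inverse-compression analysis: for each $\rho\in\mathcal{R}_{U_{2,4}}$, I would classify all $\rho'$ admitting a line $e'$ with $\rho'_{\downarrow e'}\cong\rho$ and check whether $\rho'$ is an excluded minor. The key claim to establish is that only $S_2$ lifts nontrivially, producing precisely the sequence $S_3,S_4,\ldots$ (Example~\ref{ex: spikelike compression}); for every other $\rho\in\mathcal{R}_{U_{2,4}}$, any proper inverse compression $\rho'$ already has a proper minor outside $\mathcal{P}_{U_{2,4}}$ and so is not an excluded minor. Here the uncompression creates a new line $e'$ in $\rho'$, and one must trace, via Lemma~\ref{lem:naturalminor} and Corollary~\ref{cor: natural matroid of compression}, how adding that line interacts with the existing $U_{2,4}$-minor of $M_\rho$; the asymmetry of $S_n$ — where the unique $U_{2,4}$-minor sits ``symmetrically'' across all lines — is what makes it the unique lift-friendly member. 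This inverse-compression step, together with the geometric rigidity just described, is where the argument is longest and most delicate.
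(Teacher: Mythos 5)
Your proposal follows exactly the paper's two-phase strategy: first enumerate $\mathcal{R}_{U_{2,4}}$ by using the nullity bound $\sum_{e\in E}\rho(e)\geq\rho(E)+2$ and casing on the numbers of points and lines (with $2$-duality halving the work), then classify decompressions to show that only the $S_n$ chain extends. The paper's proof is this plan carried out in detail; the one tactical point worth absorbing is that its case analysis deliberately records \emph{all} excluded minors with each point/line profile, not only those in $\mathcal{R}_{U_{2,4}}$, because those stronger statements are exactly what get cited in the decompression phase to rule out lifts of $L_2$, $A_3$, and $B_4$.
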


Note that we get $B_3$ from $A_3$, and $B_4$ from $A_4$, by replacing
a particular line by two points placed freely on it.  By Theorem
\ref{thm:dual closure of P} and Lemma \ref{lemma:R 2-duality},
$\mathcal{P}_{U_{2,4}}$, its set of excluded minors, and
$\mathcal{R}_{U_{2,4}}$ are closed under $2$-duality.  The excluded
minors $S_n$, for $n\geq 2$, $A_3$, and $B_4$ are self-$2$-dual.  It
is easy to check that every polymatroid listed in Theorem \ref{thm:
  main theorem} is an excluded minor for $\mathcal{P}_{U_{2,4}}$, and
that each except $S_n$ with $n\geq 3$ has no compression that is an
excluded minor.

Before proving our main theorem, we give a useful lemma involving the
polymatroids $Z_3$ and $Z_{2,2}$ as in Figure \ref{fig:exceptions},
which are binary-natural.

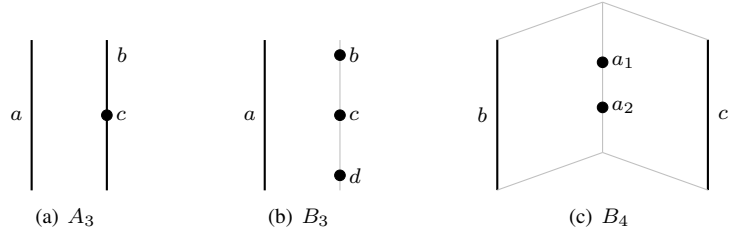
\begin{figure}
  \centering
  \subfigure[$A_3$]{
    \begin{tikzpicture}[scale=2]
      \draw[thick](0,0)--(0,1);
      \draw[thick](.5,0)--(.5,1);      
      \filldraw (.5,.5) node[right] {\footnotesize $c$} circle (1pt);      
      \node at (-.1,.5) {\footnotesize $a$};      
      \node at (.6,.9) {\footnotesize $b$};      
      \label{subfig:A_3}
    \end{tikzpicture}}
  \hspace{30pt}
  \subfigure[$B_3$]{
    \begin{tikzpicture}[scale=2]
      \draw[thick](0,0)--(0,1);
      \draw[thin, gray!50](.5,0)--(.5,1);
      \filldraw (.5,.9) node[right] {\footnotesize $b$} circle (1pt);
      \filldraw (.5,.5) node[right] {\footnotesize $c$} circle (1pt);
      \filldraw (.5,.1) node[right] {\footnotesize $d$} circle (1pt);
      \node at (-.1,.5) {\footnotesize $a$};
      \label{subfig:B_3}
    \end{tikzpicture}}
  \hspace{30pt}
  \subfigure[$B_4$]{
    \begin{tikzpicture}[scale=2]
      \draw[thin, gray!50](0,0)--(0,1);
      \draw[thick](-.7,-.25)--(-.7,.75);
      \draw[thick](.7,-.25)--(.7,.75);
      \draw[thin, gray!50](0,1)--(-.7,.75);
      \draw[thin, gray!50](0,0)--(-.7,-.25);
      \draw[thin, gray!50](0,1)--(.7,.75);
      \draw[thin, gray!50](0,0)--(.7,-.25);
      \filldraw (0, .3) node[right] {\footnotesize $a_2$} circle (1pt);
      \filldraw (0,.6) node[right] {\footnotesize $a_1$} circle (1pt);
      \node at (-.8,.25) {\footnotesize $b$}; \node at (.8,.25)
      {\footnotesize $c$};      
      \label{subfig:B_4}
    \end{tikzpicture}}
  \caption{Three excluded minors for
    $\mathcal{P}_{U_{2,4}}$.}\label{figure: excluded minors}
\end{figure}

\begin{lemma}\label{lemma: exceptions}
  Let $\rho$ be a rank-$4$ binary-natural $2$-polymatroid.  If $\rho$
  consists of three lines, then $\rho$ is $Z_3$.  If $\rho$ consists
  of two lines and two points, with no point lying on a line and no
  parallel points, then $\rho$ is $Z_{2,2}$.
\end{lemma}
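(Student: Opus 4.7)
The plan is a case analysis on the ranks of pairs of lines, using the clone structure of the natural matroid $M_\rho$ together with the binary hypothesis. Two observations are used throughout. First, if $e, f$ satisfy $\rho(e) = \rho(f) = 2$ and $\rho(\{e, f\}) = 2$, then in $M_\rho$ the $4$-element set $X_e \cup X_f$ has rank $2$, with $X_e, X_f$ independent clone pairs; a cross-parallel between the two clone pairs would propagate through a clone swap to create a parallel within one pair, contradicting $\rho(e) = 2$ or $\rho(f) = 2$, so the restriction is $U_{2,4}$, forbidden in a binary-natural polymatroid. Second, by Theorem~\ref{thm:dual closure of P} the $2$-dual $\rho^*$ is also binary-natural, so the same observation applies to $M_{\rho^*}$.

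For part~1, I first rule out $\rho(\{e, f\}) = 2$ for any pair of lines using the first observation. To rule out $\rho(\{e, f\}) = 4$, in each subcase (one, two, or three pair-ranks equal to~$4$) direct computation from $\rho^*(X) = 2|X| - \rho(E) + \rho(E - X)$ gives $\rho^*(E) = 2$ with several elements of positive $\rho^*$-rank in that rank-$2$ ambient; whichever combination arises (two lines, two non-parallel points, or a line and a non-parallel point), the first observation exhibits a $U_{2,4}$-restriction of $M_{\rho^*}$, contradicting $\rho^* \in \mathcal{P}_{U_{2,4}}$. With all three pair-ranks equal to~$3$ and $\rho(E) = 4$, the rank function of $\rho$ coincides with that of $Z_3$.

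For part~2, the same argument rules out $\rho(\{p, q\}) = 2$. To rule out $\rho(\{p, q\}) = 4$, I fix a binary representation with $X_p \cup X_q = \{e_1, e_2, e_3, e_4\}$ as a basis of $\mathbb{F}_2^4$ and analyze what the clone swaps $(p_1, p_2)$ and $(q_1, q_2)$ impose on $r$ and $s$. Applied to $T = \{e_3, e_4, r\}$ and $T = \{e_1, e_2, r\}$, together with the hypothesis that $r$ is not on line $p$ or $q$, these conditions force $r$ to be represented by $e_1 + e_2 + e_3 + e_4$; symmetrically $s = e_1 + e_2 + e_3 + e_4$, contradicting $\rho(\{r, s\}) = 2$. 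Hence $\rho(\{p, q\}) = 3$, and the first observation then forces $X_p \cup X_q$ to be $U_{3, 4}$; I place $X_q = \{e_3,\, e_1 + e_2 + e_3\}$. A similar clone analysis excludes $r$ and $s$ from the plane of $p, q$, giving $\rho(\{p, q, r\}) = \rho(\{p, q, s\}) = 4$. Writing $r = v_r + e_4$ and $s = v_s + e_4$, the clone conditions applied to $T = \{r, s\}$ and related sets yield $v_r + v_s = e_1 + e_2$, from which direct computation gives $\rho(\{p, r, s\}) = \rho(\{q, r, s\}) = 3$. The resulting rank function matches $Z_{2,2}$.

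The main obstacle is the coordinate-level clone analysis of the $\rho(\{p, q\}) = 4$ case of part~2, where the dual does not immediately exhibit a forbidden configuration. Here both clone swaps must act in concert: each swap alone leaves many candidate coordinate vectors for $r$, but the joint constraint pins $r$ (and hence $s$) to the unique vector fixed by both swaps, forcing $r = s$. Similar but milder coordinate analyses then drive the remaining cases once $\rho(\{p, q\}) = 3$ is established.
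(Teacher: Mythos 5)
Your argument is correct, and while part~1 is essentially the paper's argument, part~2 takes a genuinely different route. For part~1 both proofs exclude parallel lines by exhibiting $U_{2,4}$ on the four clones (the paper cites the excluded minor $S_2$), and both exclude skew pairs by duality: you compute $\rho^*$ directly, note it has rank $2$, and find $U_{2,4}$ in $M_{\rho^*}$ in each of the three subcases, whereas the paper contracts a doubly-skew line to reach $S_2$ and invokes the excluded minor $A_4$ for the remaining case --- but $A_4$ is itself obtained as the $2$-dual of $L_2$, so the mechanism is the same. For part~2 the paper argues in two lines: freely add a rank-$2$ element to the pair of points, delete the points, assert that the result is binary-natural, and apply part~1 to get $Z_3$ and hence $Z_{2,2}$. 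You instead coordinatize: fixing a binary representation of $M_\rho$ (which is simple under the stated hypotheses) and using the fact that swapping the two clones in $X_p$ or in $X_q$ preserves all ranks, you pin down the admissible vectors for $r$ and $s$, which rules out $\rho(\{p,q\})=4$ (both points would be forced to $e_1+e_2+e_3+e_4$ and hence be parallel) and then determines every rank when $\rho(\{p,q\})=3$. Your version is longer but entirely self-contained: it does not rely on the principal extension preserving binary-naturality, nor on recovering $\rho$ from the modified polymatroid, both of which the paper's reduction requires. One presentational suggestion: where you write that the clone swaps are ``applied to $T=\{e_3,e_4,r\}$,'' state explicitly that $r\in\cl(\{e_1,e_3,e_4\})$ if and only if $r\in\cl(\{e_2,e_3,e_4\})$ because $e_1,e_2$ are clones; that is the precise statement forcing the first two coordinates of $r$ to agree, and the analogous statements drive the later steps.
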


\begin{proof}
  Since $S_2$ is an excluded minor for the class
  $\mathcal{P}_{U_{2,4}}$, binary-natural polymatroids have no
  parallel lines.  First assume that $\rho$ consists of three lines.
  No line can be skew to each of the others, for otherwise contracting
  it would give $S_2$.  Hence, some line must be coplanar with each of
  the other two.  The other two lines cannot be skew, for otherwise we
  would have the excluded minor $A_4$.  Hence, each pair of lines is
  coplanar, which gives $Z_3$.

  Now assume that $\rho$ consists of two points, $a$ and $b$, and two
  lines, with no point lying on a line and no parallel points.  Take
  the principal extension of $\rho$ freely adding a rank-$2$ element
  to $\{a,b\}$, and then delete $a,b$.  The result is clearly
  binary-natural and, by the first part, is $Z_3$, so $\rho$ is
  $Z_{2,2}$.
\end{proof}

The proof of Lemma \ref{lemma: exceptions} gives additional
information, which we state next and which enters into our proof of
Theorem \ref{thm: main theorem}.

\begin{lemma}\label{lemma: exceptions part 2}
  Let $\rho$ be a rank-$4$ excluded minor for $\mathcal{P}_{U_{2,4}}$.
  If $\rho$ consists of three lines, then $\rho$ is $A_4$.  If $\rho$
  consists of two lines and two points, with no point lying on a line,
  then $\rho$ is $B_4$.
\end{lemma}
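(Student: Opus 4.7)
For Part 1, I would follow the case analysis in Lemma \ref{lemma: exceptions} but track the excluded-minor property instead of binary-naturality. Each pair of the three lines is coplanar or skew. If some line $\ell$ is skew to both of the other two lines $f$ and $g$, then because the rank of $\rho$ is $4$, submodularity forces $\rho(\{\ell, f, g\}) = 4$, so $\rho_{/\ell}$ has $\rho_{/\ell}(\{f,g\}) = 2$ with both $f, g$ still of rank $2$; thus $f$ and $g$ are parallel lines in $\rho_{/\ell}$, which contains $S_2$ as a restriction. Since $S_2$ is an excluded minor for $\mathcal{P}_{U_{2,4}}$, this contradicts $\rho$ being an excluded minor. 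Therefore no line is skew to both others, so the case analysis of Lemma \ref{lemma: exceptions} leaves only two possibilities: $\rho = Z_3$ (all three pairs coplanar) or $\rho = A_4$ (exactly one pair skew). The former is binary-natural, so $\rho = A_4$.

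For Part 2, I would imitate the strategy of the second part of Lemma \ref{lemma: exceptions}. Using that every excluded minor for $\mathcal{P}_{U_{2,4}}$ has no parallel points, we have $\rho(\{a,b\}) = 2$. Let $\rho^+$ be the principal extension of $\rho$ by a rank-$2$ line $L$ freely added to $\{a, b\}$, and set $\rho' = (\rho^+)_{\setminus \{a, b\}}$. Then $\rho'$ is a rank-$4$ polymatroid on three lines $\ell_1, \ell_2, L$. The plan is to show that $\rho'$ is itself an excluded minor for $\mathcal{P}_{U_{2,4}}$; by Part 1, this forces $\rho' = A_4$. Translating the ranks of $\rho'=A_4$ back through the construction, so $\rho(\{\ell_1, \ell_2\}) = \rho'(\{\ell_1, \ell_2\}) = 4$ and $\rho(\{\ell_i, a, b\}) = \rho^+(\{\ell_i, L\}) = \rho'(\{\ell_i, L\}) = 3$, reproduces exactly the rank function of $B_4$.

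The main technical hurdle is verifying that $\rho'$ is an excluded minor. To show $\rho' \notin \mathcal{P}_{U_{2,4}}$, I would establish that $M_{\rho'}$ is isomorphic to $M_\rho$ via the bijection that fixes $X_{\ell_1} \cup X_{\ell_2}$ and sends $\{a, b\}$ to $X_L$. The key observation is that the rank-$2$ flat $\cl_{M_\rho}(\{a,b\})$ contains no element of $M_\rho$ besides $a$ and $b$: the rank formula (\ref{eq:NatMtdRk}) together with the hypothesis that no point of $\rho$ lies on a line of $\rho$ yields $r(\{a, b, \ell_{i,j}\}) = 3$ for each $\ell_{i,j}$. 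Under this isomorphism, $M_{\rho'}$ inherits the $U_{2,4}$-minor of $M_\rho$. The six single-element minors of $\rho'$ (deleting or contracting $\ell_1$, $\ell_2$, or $L$) each reduce to two coplanar or skew lines, or a line with one point on it, all of which are easily seen to have binary natural matroids.
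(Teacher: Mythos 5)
Your Part 1 is correct and is essentially the argument the paper intends (the paper obtains this lemma by re-reading the proof of Lemma \ref{lemma: exceptions}): a line skew to both others yields $S_2$ as a proper minor, all pairs coplanar yields $Z_3$, which is binary-natural, so exactly one pair is skew and $\rho=A_4$. (Two small points: you should note that no two lines are parallel, since a parallel pair is a proper $S_2$-restriction, and the step forcing $\rho(\{\ell,f,g\})=4$ is monotonicity, not submodularity.)

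Part 2 follows the paper's strategy but has a genuine gap at the claimed isomorphism $M_{\rho'}\cong M_\rho$. Your key observation --- that $\cl_{M_\rho}(\{a,b\})$ contains no third element --- is necessary but not sufficient: replacing $a,b$ by two elements freely added to that flat preserves the matroid only if, in addition, every flat of $M_\rho$ containing $a$ or $b$ contains both, which here amounts to $a\in\cl_\rho(\{\ell_1,\ell_2\})$ if and only if $b\in\cl_\rho(\{\ell_1,\ell_2\})$. This fails for rank-$4$ $2$-polymatroids satisfying every hypothesis except minimality: if $\ell_1,\ell_2$ are coplanar with $a$ in their plane and $b$ outside it, then $a\in\cl_{M_\rho}(X_{\ell_1}\cup X_{\ell_2})$ while the freely added element of $X_L$ is not in the corresponding closure in $M_{\rho'}$, so your bijection is not an isomorphism even though your key observation holds. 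Ruling out such configurations requires the excluded-minor hypothesis (in that one, $M_{\rho_{/a}}$ restricted to $X_{\ell_1}\cup X_{\ell_2}$ is $U_{2,4}$, so $\rho_{/a}\notin\mathcal{P}_{U_{2,4}}$), and the same issue recurs twice more: the six single-element minors of $\rho'$ are not all ``easily seen'' to be binary-natural (for instance $\rho'_{/\ell_1}$ is $S_2$ when $\ell_1$ is skew to both $\ell_2$ and $\{a,b\}$, a configuration excluded only by minimality of $\rho$); and the translation step silently assumes that the unique skew pair of $\rho'=A_4$ is $\{\ell_1,\ell_2\}$, whereas if it were $\{L,\ell_1\}$ the resulting $\rho$ would still have natural matroid $M_{A_4}$ but would not be $B_4$ (one rules it out because $\rho_{/a}$ is then $A_3$ or has an $S_2$-restriction). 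None of these is fatal --- each bad case is killed by exhibiting a proper minor of $\rho$ outside $\mathcal{P}_{U_{2,4}}$ --- but as written Part 2 does not establish the facts it relies on.
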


\addtocounter{thm}{-2}

\begin{proof}[Proof of Theorem \ref{thm: main theorem}]
  Using the strategy outlined before Lemma \ref{lemma: compression of
    excluded minor}, we first focus on the excluded minors in
  $\mathcal{R}_{U_{2,4}}$.  Let $(E,\rho)$ be an excluded minor in
  $\mathcal{R}_{U_{2,4}}$.  By the discussion at the beginning of
  Section \ref{section:compression} and the definition of
  $\mathcal{R}_{U_{2,4}}$, each set $X_e$ contributes at least one
  element to any $U_{2,4}$-minor of $M_\rho$, so $\rho$ has the
  options below for its numbers of points and lines.
  \begin{center}
    \begin{tabular}{|c||c|c|c|c|c|c|c|c|c|}
      \hline $\#$ points & 4 & 2 & 3 & 0 & 1 & 2 & 0 & 1 & 0 \\
      \hline $\#$ lines & 0 & 1 & 1 & 2 & 2 & 2 & 3 & 3 & 4 \\ \hline
    \end{tabular}
  \end{center}
  \noindent We analyze such polymatroids below, concluding the
  analysis with \ref{four lines}.  For the second half of the proof,
  it will be useful to note that in this analysis, we find all
  excluded minors for $\mathcal{P}_{U_{2,4}}$ with these parameters,
  not just those in $\mathcal{R}_{U_{2,4}}$.  Recall that the
  \emph{nullity} of a matroid $(E,r)$ is $|E|-r(E)$.  The nullity of
  $U_{2,4}$ is $2$.  Taking a minor of a matroid cannot increase its
  nullity, so $M_\rho$ must have nullity at least $2$, so $\rho$ must
  satisfy the inequality
  \begin{equation}\label{eq:nullity}
    \sum_{e\in E}\rho(e)\geq \rho(E)+2.
  \end{equation}
  Clearly $U_{2,4}$ is the only excluded minor for
  $\mathcal{P}_{U_{2,4}}$ that is a matroid, and $L_2$ and $S_2$ are
  the only excluded minors consisting of a line and two points, and
  two lines, respectively.  It is easy to see that these are the only
  excluded minors for $\mathcal{P}_{U_{2,4}}$ with $\rho(E)=2$.  Since
  $S_2$ is an excluded minor, no other excluded minor has parallel
  lines.
  
\begin{sublemma}\label{one line three points}
  The only excluded minor for $\mathcal{P}_{U_{2,4}}$ consisting of
  one line and three points is $B_3$.
\end{sublemma}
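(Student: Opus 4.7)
Let $(E,\rho)$ be an excluded minor for $\mathcal{P}_{U_{2,4}}$ with $E=\{a,b,c,d\}$, $\rho(a)=2$, and $\rho(b)=\rho(c)=\rho(d)=1$. The plan is a short case analysis driven by two parameters: $k=|\{x\in\{b,c,d\}:\rho(\{a,x\})=2\}|$, the number of points lying on $a$, and $\rho(\{b,c,d\})\in\{2,3\}$. First I record two structural facts. Since $\rho\not\in\mathcal{P}_{U_{2,4}}$, the matroid $M_\rho$ has a $U_{2,4}$-minor; if two of $b,c,d$ were parallel in $\rho$, the corresponding parallel elements of $M_\rho$ could not both appear in any $U_{2,4}$-minor, so deleting one of them from $\rho$ would leave the $U_{2,4}$-minor intact, contradicting $\rho$ being an excluded minor. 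Hence no two of $b,c,d$ are parallel. Second, inequality~(\ref{eq:nullity}) gives $\rho(E)\leq 3$, while $\rho(E)\geq \rho(a)=2$.

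If $\rho(E)=2$ then $k=3$ and all three points lie on $a$, so $M_\rho=U_{2,5}$ and $\rho_{\setminus d}$ already has natural matroid $U_{2,4}$, a contradiction. Assume $\rho(E)=3$. If $k\geq 2$, say $b,c$ lie on $a$, then $\rho(\{a,b,c\})=2$, so $\{a_1,a_2,b,c\}$ is a rank-$2$ set of four pairwise non-parallel elements of $M_\rho$, i.e.\ a $U_{2,4}$-restriction; deleting the remaining point then produces a proper minor with natural matroid $U_{2,4}$, a contradiction. The two configurations with $\rho(\{b,c,d\})=3$ (with $k=0$ or $k=1$) are both eliminated by one contraction trick: pick any $x\in\{b,c,d\}$ that is off $a$, and let $\{y,z\}=\{b,c,d\}\setminus\{x\}$. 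Since $\rho(E)=3$ and $\rho(\{a,x\})=3$, submodularity forces $\rho(\{a,y,z\})=3$ and hence $\rho_{/x}(\{a,y\})=\rho_{/x}(\{a,z\})=2=\rho_{/x}(a)$. Because $\rho_{/x}(\{y,z\})=2$, the polymatroid $\rho_{/x}$ is a line with two non-parallel points on it, so its natural matroid is $U_{2,4}$, again a contradiction. The remaining case $k=1$ with $\rho(\{b,c,d\})=2$ is not an excluded minor either: a direct rank computation identifies $M_\rho$ with $M(K_4\setminus e)$, which is binary, so $\rho$ is actually in $\mathcal{P}_{U_{2,4}}$.

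The only surviving configuration is $k=0$ with $\rho(\{b,c,d\})=2$, which is exactly $B_3$. To finish I verify that this $\rho$ really is an excluded minor. For non-binarity of $M_\rho$, one checks directly that $\{a_1,a_2,c,d\}$ and $\{a_1,b,c,d\}$ are both $4$-circuits of $M_\rho$, while their symmetric difference $\{a_2,c,d\}$ is independent; thus the circuit space is not closed under symmetric difference and $M_\rho$ is not binary. Each of the six proper single-element minors is dispatched by a short computation: each $\rho_{\setminus e}$ has natural matroid isomorphic to $U_{3,4}$, $U_{2,3}$, or $U_{2,3}\oplus U_{1,1}$, and each $\rho_{/e}$ has rank at most $2$ on at most three pairwise non-parallel elements (with possibly additional parallel copies), so every proper minor is binary-natural. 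The main obstacle is simply keeping the bookkeeping of cases clean; the unifying observation is that whenever $\rho(\{b,c,d\})=3$ a single contraction collapses the remaining structure onto $a$ and produces $U_{2,4}$.
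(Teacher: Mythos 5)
Your main argument is essentially the paper's: the nullity bound forces $\rho(E)\le 3$, the rank-$2$ case is degenerate, two points on the line give an $L_2$-restriction (equivalently a $U_{2,4}$-restriction of $M_\rho$), three non-collinear points let you contract a point off the line to produce $L_2$, and the one-point-on-the-line collinear configuration has a binary natural matroid (the parallel connection of two triangles, i.e.\ $M(K_4\setminus e)$). The paper states these same steps more tersely; your explicit indexing by $(k,\rho(\{b,c,d\}))$ is a correct and complete enumeration of the same cases.

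There is, however, a concrete error in your added verification that $B_3$ is an excluded minor (a step the paper handles by an earlier blanket remark). The set $\{a_1,b_1,c_1,d_1\}$ is \emph{not} a circuit of $M_{B_3}$: since $b,c,d$ are collinear in $B_3$, Lemma \ref{lemma:shownatural} gives $r(\{b_1,c_1,d_1\})=\rho(\{b,c,d\})=2$, so $\{b_1,c_1,d_1\}$ is already a $3$-circuit and no proper superset of it can be a circuit. The symmetric-difference argument still works if you replace your second circuit by $\{b_1,c_1,d_1\}$: then $\{a_1,a_2,c_1,d_1\}\,\triangle\,\{b_1,c_1,d_1\}=\{a_1,a_2,b_1\}$, which is independent of rank $3$, so $M_{B_3}$ is not binary. (Alternatively, $M_{B_3}/a_1\cong U_{2,4}$.) A smaller slip: the natural matroids of the single-element deletions of $B_3$ are $U_{2,3}$ (deleting the line) and $U_{3,4}$ (deleting a point); $U_{2,3}\oplus U_{1,1}$ does not occur. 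Neither issue affects the uniqueness argument, which is sound.
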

	
Let $\rho$ have one line $e$ and three points.  Thus, $\rho(E)\leq 3$
by Inequality (\ref{eq:nullity}), so $\rho(E)= 3$.  Since $\rho$ has
no $L_2$-minor, (i) at most one point is on $e$ and (ii) the three
points are collinear (otherwise contracting one not on $e$ would give
$L_2$).  The polymatroid satisfying these conditions and having a
point on $e$ is in $\mathcal{P}_{U_{2,4}}$ since its natural matroid
is the parallel connection of two $3$-circuits.  Thus, no point is on
$e$, so $\rho$ is $B_3$.

\begin{sublemma}\label{two lines one point}
  The only excluded minor for $\mathcal{P}_{U_{2,4}}$ consisting of
  two lines and a point is $A_3$.
\end{sublemma}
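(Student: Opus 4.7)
The strategy is analogous to the proof of Sublemma~\ref{one line three points}: first bound $\rho(E)$ via Inequality~(\ref{eq:nullity}), then determine the mutual geometry of the two lines and the point by eliminating, one at a time, each configuration that forces a forbidden proper minor or is already binary-natural.

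I would begin by applying Inequality~(\ref{eq:nullity}) to $E=\{e,f,p\}$ with $\rho(e)=\rho(f)=2$ and $\rho(p)=1$, which gives $\rho(E)\leq 3$; monotonicity gives $\rho(E)\geq \rho(e)=2$. Next I would rule out the possibility that $e$ and $f$ are parallel: if $\rho(\{e,f\})=2$, then $\rho_{\setminus p}=S_2$, which is itself an excluded minor and so is not in $\mathcal{P}_{U_{2,4}}$, contradicting that $\rho$ is an excluded minor. Hence $\rho(\{e,f\})=3$, and consequently $\rho(E)=3$.

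It remains to locate $p$ with respect to the (now coplanar) lines $e$ and $f$, which splits into three subcases. If $p$ lies on neither line, then $\rho(\{e,p\})=\rho(\{f,p\})=3$, and a direct computation yields $\rho_{/p}(e)=\rho_{/p}(f)=\rho_{/p}(\{e,f\})=2$, so $\rho_{/p}=S_2$, again contradicting the excluded-minor hypothesis. If $p$ lies on both lines, then $M_\rho$ has $\{e_1,e_2,p_1\}$ and $\{f_1,f_2,p_1\}$ as $3$-circuits and is precisely the parallel connection of two $3$-circuits at the common basepoint $p_1$; since parallel connections of binary matroids are binary, $\rho\in\mathcal{P}_{U_{2,4}}$, a contradiction. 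Therefore $p$ lies on exactly one of $e$ and $f$, which matches the defining description of $A_3$.

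Finally, to verify that $A_3$ actually is an excluded minor, I would check that $M_{A_3}$ contains a $U_{2,4}$-minor---for instance, contracting one element of $X_f$, where $f$ is the line missed by $p$, yields a rank-$2$ matroid on four pairwise non-parallel points, namely $U_{2,4}$---and that each single-element deletion and contraction of $A_3$ produces a polymatroid whose natural matroid is one of $U_{3,4}$, $U_{2,3}$, $U_{1,2}$, or a point together with a loop, all of which are binary. The one nonroutine step in the plan is recognizing, in the ``$p$ on both lines'' subcase, that $M_\rho$ is a parallel connection of two $3$-circuits; the remaining steps are direct manipulations of the rank function using the excluded-minor hypothesis.
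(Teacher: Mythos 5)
Your proof is correct and follows essentially the same route as the paper's: bound $\rho(E)$ by the nullity inequality, exclude parallel lines via $S_2$, and show the point lies on exactly one line by contracting the point (yielding $S_2$) when it lies on neither, and by identifying $M_\rho$ as the parallel connection of two $3$-circuits, hence binary, when it lies on both. Your choice to contract the \emph{point} in the first subcase is in fact the computation that works (contracting a line in rank $3$ gives a rank-$1$ polymatroid, not $S_2$), so the paper's ``$\rho_{/a}$'' there should be read as $\rho_{/c}$; your concluding verification that $A_3$ is itself an excluded minor is also fine, modulo the omission of $U_{3,3}$ (from deleting the line through the point) from your list of natural matroids of proper minors, all of which are binary in any case.
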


Let $\rho$ have two lines, $a$ and $b$, and a point, $c$.  Inequality
(\ref{eq:nullity}) gives $\rho(E)\leq 3$, so $\rho(E)= 3$.  Exactly
one line must span $c$ since (i) if $c\not\in\cl(a)\cup\cl(b)$, then
$\rho_{/a}$ would be $S_2$, and (ii) if $c\in\cl(a)\cap\cl(b)$, then
$M_\rho$ would be the parallel connection of two $3$-circuits, which
is binary.  Thus, $\rho$ is $A_3$.

\begin{sublemma}\label{two lines two points}
  The only excluded minor for $\mathcal{P}_{U_{2,4}}$ consisting of
  two lines and two points is $B_4$.
\end{sublemma}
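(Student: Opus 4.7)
Let $(E,\rho)$ be an excluded minor for $\mathcal{P}_{U_{2,4}}$ with two lines $a,b$ and two points $c,d$. The plan is to show $\rho(E)=4$ and that no point lies on a line, and then invoke Lemma~\ref{lemma: exceptions part 2}. I would first establish two preliminary facts: by the nullity inequality~(\ref{eq:nullity}), $\rho(E)\leq 4$, with equality $\rho(E)=2$ forbidden since it would force $a,b$ parallel (making $\rho_{\setminus\{c,d\}}=S_2$ a proper minor); and the points $c,d$ cannot be parallel, for by the opening discussion of Section~\ref{section:compression} every $U_{2,4}$-minor of $M_\rho$ must contain both points $c$ and $d$, which is impossible if they are clones in $M_\rho$.

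To rule out $\rho(E)=3$, I would apply Sublemma~\ref{two lines one point} to the rank-$3$ minor $\rho_{\setminus d}$: the argument used in its proof shows that for $\rho_{\setminus d}\in\mathcal{P}_{U_{2,4}}$ (avoiding $A_3$ and $S_2$ as minors), the point $c$ must lie on both $a$ and $b$. Symmetrically $d$ lies on both lines. Since $\cl(a)\cap\cl(b)$ is a single rank-$1$ flat (two distinct non-parallel coplanar lines meet in a point), $c$ and $d$ end up parallel, contradicting what was established above. Hence $\rho(E)=4$.

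Assuming $\rho(E)=4$, the remaining step is to show that no point lies on a line. Suppose for contradiction $c\in\cl(a)$. In the coplanar subcase $\rho(\{a,b\})=3$, Sublemma~\ref{two lines one point} applied to $\rho_{\setminus d}$ again forces $c\in\cl(a)\cap\cl(b)$, and the restriction of $M_\rho$ to $X_{\{a,b,c\}}$ becomes a binary parallel connection of two copies of $U_{2,3}$ at $c$; the point $d$ is independent of this rank-$3$ flat (else $d\in\cl(\{a,b,c\})$ forces $\rho(E)=3$), so $M_\rho$ is a direct sum of a binary matroid and a coloop, hence binary, a contradiction. In the skew subcase $\rho(\{a,b\})=4$, $c\notin\cl(b)$, and I would split on the position of $d$: if $d\in\cl(a)$ then $\rho_{\setminus b}=L_2$, contradicting $\rho$ being an excluded minor; if $d\in\cl(b)$ then $M_\rho=U_{2,3}\oplus U_{2,3}$, which is binary; and if $d$ lies on neither line, a direct coordinatization in $\mathrm{GF}(2)^4$ (taking $a_1,a_2,b_1,b_2$ as a standard basis, $c=a_1+a_2$, and $d$ an admissible vector) shows that $M_\rho$ is binary. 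In every case we obtain a contradiction, so no point lies on a line, and Lemma~\ref{lemma: exceptions part 2} yields $\rho=B_4$. The main obstacle will be the last subcase above, where the polymatroid data does not at first glance pin down $M_\rho$; I would need to enumerate the admissible values of $\rho(\{a,c,d\})$ and $\rho(\{b,c,d\})$ and exhibit a $\mathrm{GF}(2)$-representation consistent with each.
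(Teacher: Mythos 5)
Your overall skeleton matches the paper's: the nullity bound gives $\rho(E)\leq 4$, the rank-$3$ case dies because \ref{two lines one point} forces $c,d\in\cl(a)\cap\cl(b)$ and hence parallel points, and the endgame is Lemma \ref{lemma: exceptions part 2}. Most of your subcases for ``some point lies on a line'' are also sound (the coplanar subcase, and the subcases $d\in\cl(a)$ and $d\in\cl(b)$). But the final subcase --- $a,b$ skew, $c\in\cl(a)$, $d$ on neither line --- has a genuine gap, and it is exactly the subcase you flagged as the obstacle. Here the only undetermined value is $\rho(\{b,c,d\})\in\{3,4\}$ (note $\rho(\{a,c,d\})=3$ is forced since $c\in\cl(a)$). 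When $\rho(\{b,c,d\})=4$, the polymatroid is a legitimate $2$-polymatroid but its natural matroid is \emph{not} binary: $\{a_1,a_2,c_1\}$ and $\{a_1,a_2,b_1,b_2,d_1\}$ are both circuits of $M_\rho$, yet their symmetric difference $\{b_1,b_2,c_1,d_1\}$ has rank $\rho(\{b,c,d\})=4$ and so is independent, violating the circuit-space condition for binary matroids. So there is no $\mathrm{GF}(2)$ coordinatization to exhibit (indeed, with $c=a_1+a_2$ the only vector making $\{a_1,a_2,b_1,b_2,d_1\}$ a circuit over $\mathrm{GF}(2)$ is $d=a_1+a_2+b_1+b_2$, which automatically forces $\rho(\{b,c,d\})=3$). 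Your plan of ``show $M_\rho$ is binary in every admissible completion'' therefore cannot be carried out.

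The way out is not representability but minimality: when $\rho(\{b,c,d\})=4$, the contraction $\rho_{/d}$ is two lines and a point in rank $3$ with the point on exactly one line, i.e., $A_3$, so $\rho$ has a proper minor outside $\mathcal{P}_{U_{2,4}}$ and is not an excluded minor. This is essentially how the paper proceeds, in the opposite order: it first uses connectivity of excluded minors to get $d\notin\cl(a)\cup\cl(b)$, then applies the \ref{two lines one point} analysis to $\rho_{/d}$ to conclude $c$ must lie on both lines of $\rho_{/d}$, which forces $\rho(\{b,c,d\})=3$; only then does it identify $M_\rho$ as the (binary) parallel connection of a $3$-circuit and a $4$-circuit at $c_1$. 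If you insert that contraction step, your argument closes up and the remaining value $\rho(\{b,c,d\})=3$ does admit the binary description you were after.
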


Let $\rho$ have two lines, $a$ and $b$, and two points, $c$ and $d$.
Inequality (\ref{eq:nullity}) gives $\rho(E)\leq 4$.  If $\rho(E)=3$,
then, since $\rho_{\del c},\rho_{\del d} \in \mathcal{P}_{U_{2,4}}$,
the proof of \ref{two lines one point} would give
$c,d\in \cl(a)\cap\cl(b)$, so $c$ and $d$ would be parallel, contrary
to $\rho$ being an excluded minor.  Thus, $\rho(E)=4$.  Assume that
some point lies on a line, say $c\in\cl(a)$.  Since $\rho$ is
connected, $d\not\in \cl(a)\cup\cl(b)$.  Now
$\rho_{/d}\in\mathcal{P}_{U_{2,4}}$, so $c$ would be on both lines of
$\rho_{/d}$, so $\rho(\{b,c,d\})=3$, but then $M_\rho$ would be the
parallel connection of a $4$-circuit and a $3$-circuit, which is
binary.  Thus, no point lies on a line, so $\rho$ is $B_4$ by Lemma
\ref{lemma: exceptions part 2}.

\begin{sublemma}\label{three lines}
  The only excluded minors for $\mathcal{P}_{U_{2,4}}$ consisting of
  three lines are $A_4$ and $S_3$.
\end{sublemma}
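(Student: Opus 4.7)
The plan is to bound $\rho(E)$ using Inequality (\ref{eq:nullity}) and then dispatch the remaining cases one at a time. Since $\rho$ has three elements, each a line, we have $\sum_{e\in E}\rho(e)=6$, so (\ref{eq:nullity}) forces $\rho(E)\leq 4$; trivially $\rho(E)\geq 2$ since $\rho(E)$ is at least the rank of any single element.

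First, I would eliminate the low-rank extremes. If $\rho(E)=2$, then the three lines all coincide as rank-$2$ flats and are pairwise parallel, so deleting any one of them produces $S_2$, contradicting $\rho$ being an excluded minor (since $S_2$ itself is already an excluded minor, by Figure \ref{fig:natural}). If $\rho(E)=3$, the same deletion trick rules out the existence of any parallel pair, so necessarily $\rho(\{a,b\})=3$ for every pair of lines; together with $\rho(E)=3$ and $\rho(e)=2$ for every $e\in E$, this matches the rank function in (\ref{eq:spikelike}) with $n=3$, so $\rho\cong S_3$.

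For the remaining case $\rho(E)=4$, Lemma \ref{lemma: exceptions part 2} directly gives $\rho\cong A_4$. Conversely, Proposition \ref{prop:spikelike} already confirms that $S_3$ is an excluded minor, and $A_4$ is recognized as one via Theorem \ref{thm:dual closure of P} (as the $2$-dual of $L_2$).

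There is no substantive obstacle here: the argument is a finite case check enabled by the nullity bound, and each subcase collapses immediately once the appropriate previous lemma is invoked. The only spot requiring brief verification is that in the $\rho(E)=3$ subcase with no parallel pair, the rank function of $\rho$ is completely pinned down on $\emptyset$, singletons, pairs, and $E$; this is automatic because $|E|=3$ leaves no other subsets to specify.
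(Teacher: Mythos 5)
Your argument is correct and follows essentially the same route as the paper's: the nullity inequality bounds $\rho(E)\leq 4$, the rank-$3$ case is pinned down as $S_3$ by ruling out parallel lines (which the paper handles by noting that $S_2$ is already an excluded minor), and the rank-$4$ case is dispatched by Lemma \ref{lemma: exceptions part 2}. Your extra explicit treatment of the $\rho(E)=2$ case and the check that the rank function is fully determined are fine but not substantively different from the paper.
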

	
Let $\rho$ have three lines.  Inequality (\ref{eq:nullity}) gives
$\rho(E)\leq 4$.  If $\rho(E)=3$, then all pairs of lines are
coplanar, so $\rho$ is $S_3$.  If $\rho(E)=4$, then $\rho$ is $A_4$ by
Lemma \ref{lemma: exceptions part 2}.
	
\begin{sublemma}\label{three lines one point}
  The only excluded minor for $\mathcal{P}_{U_{2,4}}$ consisting of
  three lines and one point is $A_5$.
\end{sublemma}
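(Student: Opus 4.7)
By Inequality~(\ref{eq:nullity}), $\rho(E)\le 5$. I would first pin down $\rho(E)=\rho(\{a,b,c\})=5$: if $\rho(\{a,b,c\})\le 3$, then $\rho_{\setminus p}$ contains $S_3$ (using that $S_2$ is excluded, so there are no parallel lines); if $\rho(\{a,b,c\})=4$, then by \ref{three lines} and Lemma~\ref{lemma: exceptions}, $\rho_{\setminus p}=Z_3$, and a short analysis of where $p$ lies produces either an $A_3$ minor of a line-deletion (when $p$ lies on a line), a $U_{3,5}$ inside the natural matroid of a line-deletion (when $p$ lies in a pair-span but off every line), or forces $\rho_{/p}=S_3$ (when $p$ is generic in the rank-$4$ span); finally, $\rho(E)=5$ with $\rho(\{a,b,c\})=4$ makes $\rho$ the direct sum $Z_3\oplus\{p\}$, which is disconnected and binary-natural.

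With $\rho(\{a,b,c\})=5$, the three lines are pairwise skew (any coplanar pair would cap the rank at $4$). I next show $p$ lies on no line. If $p\in\cl(a)$ and $p\notin\cl(\{b,c\})$, then $\rho_{/b}$ is two coplanar lines with $p$ on one, which is $A_3$. In the remaining subcase $p\in\cl(a)\cap\cl(\{b,c\})$, no proper minor of $\rho$ is an already-identified excluded minor; instead, one verifies $M_\rho$ is binary. Its three circuits $\{a_1,a_2,p\}$, $\{p\}\cup X_b\cup X_c$, and $X_a\cup X_b\cup X_c$ are closed under symmetric difference, and an explicit rank check shows no other circuits arise, so these three circuits span the entire nullity-$2$ cycle space and $M_\rho$ is binary, contradicting $\rho\notin\mathcal{P}_{U_{2,4}}$. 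This subcase is the main obstacle of the proof, since the contradiction comes not from a forbidden proper minor but from a direct cycle-space verification.

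Finally, with $p$ off every line, $\rho_{/p}$ is a rank-$4$ three-line $2$-polymatroid in $\mathcal{P}_{U_{2,4}}$, and has no parallel lines (since $\rho(\{x,y\})=4$ forces $\rho_{/p}(\{x,y\})\ge 3$). By \ref{three lines} and Lemma~\ref{lemma: exceptions} it must therefore be $Z_3$. Unwinding the contraction formula gives $\rho(\{x,y,p\})=4$, and so $p\in\cl(\{x,y\})$, for every pair of lines $\{x,y\}$. Together with $\rho(E)=5$ and pairwise-skew lines, this is exactly the definition of $A_5$.
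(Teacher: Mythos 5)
Your overall architecture matches the paper's (bound the rank via nullity, dispose of ranks $3$ and $4$, then show the point is on no line and contract it to land on $Z_3$), but there is a genuine gap at the pivotal step. The claim ``with $\rho(\{a,b,c\})=5$, the three lines are pairwise skew (any coplanar pair would cap the rank at $4$)'' is false: if only the pair $\{a,b\}$ is coplanar, submodularity gives $\rho(\{a,b,c\})\leq\rho(\{a,b\})+\rho(c)=3+2=5$, so the rank is not capped at $4$. (Two coplanar pairs would cap the rank at $4$, but one does not.) Consequently you never address the configuration with exactly one coplanar pair of lines, the other two pairs skew and each spanning $p$, and $p$ on no line. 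This configuration is consistent with everything you have established up to that point, and ruling it out is where the paper does its heaviest lifting: one checks that its natural matroid has exactly three circuits --- two $5$-element circuits meeting in three elements together with their symmetric difference, a $4$-element circuit from the coplanar pair --- so $M_\rho$ is the parallel connection of two $3$-circuits and a $4$-circuit at a common basepoint with the basepoint deleted, hence binary, and the configuration is not an excluded minor at all. Without this case your argument does not show that $A_5$ is the \emph{only} excluded minor with these parameters. (Your ``main obstacle,'' the subcase $p\in\cl(a)\cap\cl(\{b,c\})$, is actually dispatched in one line in the paper: $M_\rho$ is the parallel connection of a $3$-circuit and a $5$-circuit at $p_1$, hence binary.)

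A smaller omission occurs in your $\rho(\{a,b,c\})=4=\rho(E)$ analysis: when $p$ lies on a line of $Z_3$ you claim an $A_3$-minor of a line-deletion, but if $p$ lies on all three lines no line-deletion yields $A_3$ (each gives two coplanar lines with the point on both). That case must instead be killed by observing that $M_\rho$ is then the parallel connection of three $3$-circuits at a common basepoint, hence binary --- exactly as the paper does. The remaining steps of your proposal (the $S_3$ and direct-sum eliminations, the $U_{3,5}$ observation, and the final contraction to $Z_3$ forcing every pair of lines to span $p$) are sound and essentially the paper's argument.
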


Let $\rho$ have a point $e$ and three lines.  Inequality
(\ref{eq:nullity}) gives $\rho(E)\leq 5$.  Having $\rho(E)=3$ would
give the contradiction that $\rho_{\del e}$ is $S_3$.  If $\rho(E)=4$,
then Lemma \ref{lemma: exceptions} would give
$\rho_{\setminus e}=Z_3$, but that gives one of three contradictions:
having no line span $e$ would imply that $\rho_{/e}$ either is $S_3$
or has an $S_2$-restriction; having some but not all lines span $e$
would yield an $A_3$-restriction; having each line span $e$ would make
$M_\rho$ binary (it would be a parallel connection of three
$3$-circuits at a common base point).  Thus, $\rho(E)=5$.

First, assume that some line spans $e$.  Since $\rho$ is connected, no
two lines are coplanar, so only one line spans $e$. Also, $e$ is not
in the rank-$4$ flat spanned by the other two lines, for otherwise
$M_\rho$ would be the parallel connection of a $3$-circuit and a
$5$-circuit, which is binary.  Thus, contracting a line that does not
span $e$ would give $A_3$.  Thus, no line spans $e$.  Then $\rho_{/e}$
is binary-natural and has three lines in rank $4$, so $\rho_{/e}$ is
$Z_3$ by Lemma \ref{lemma: exceptions}.  Hence, each pair of lines
either (i) is skew and spans $e$ or (ii) is coplanar and does not span
$e$.  Since $\rho$ is connected, at most one pair of lines is
coplanar, so at least two pairs of lines span $e$.  If exactly two
pairs of lines spanned $e$, then $M_\rho$ would be binary; to see why,
note that two circuits of $M_\rho$ would have five elements (from the
flats spanned by pairs of skew lines) and intersect in three elements,
and the only other circuit would be their symmetric difference, a
$4$-element circuit (from the coplanar lines), so $M_\rho$ would be
the parallel connection of two $3$-circuits and a $4$-circuit at a
common base point, with the base point deleted.  Thus, each pair of
lines spans $e$, so $\rho$ is $A_5$, proving \ref{three lines one
  point}.

\begin{sublemma}\label{four lines}
  The only excluded minors for $\mathcal{P}_{U_{2,4}}$ consisting of
  four lines are $S_4$ and $A_6$.
\end{sublemma}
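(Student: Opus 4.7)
The plan is to bound $\rho(E)\le 6$ via Inequality~(\ref{eq:nullity}) (using $\sum_{e\in E}\rho(e)=8$), then analyze each possible value of $\rho(E)$, using the fact that in any excluded minor except $S_2$ every pair of lines has rank $3$ (coplanar) or $4$ (skew). For $\rho(E)\in\{2,3\}$ these constraints force every pair of lines to be coplanar, so $\rho_{\setminus e}$ consists of three pairwise coplanar lines of total rank $3$, which is $S_3$, an excluded minor, a contradiction. For $\rho(E)=4$, if some pair $\{a,b\}$ is skew then $\rho(\{a,b,d\})=\rho(E)=4$, so $\rho_{\setminus c}$ is a rank-$4$ polymatroid on three lines containing a skew pair; by Lemma~\ref{lemma: exceptions} it cannot be $Z_3$, hence it is not binary-natural, contradicting $\rho$ being an excluded minor. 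So every pair is coplanar; submodularity then bounds each triple at rank $\le 4$, and a rank-$3$ triple would give $\rho_{\setminus e}=S_3$, a contradiction, so every triple has rank $4$ and $\rho=S_4$.

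For $\rho(E)\in\{5,6\}$ I apply $2$-duality: by Theorem~\ref{thm:dual closure of P}, $\rho^*$ is also an excluded minor, with $\rho^*(E)=8-\rho(E)\in\{3,2\}$. Since excluded minors have no loops (a loop plays no role in $U_{2,4}$-minors of the natural matroid, so the loopless deletion would already fail to be binary-natural), $\rho^*(e)\ge 1$ for every $e$. When $\rho(E)=6$, $\rho^*$ is a rank-$2$ polymatroid on four elements with no loops; any two lines in rank $2$ are parallel and give an $S_2$-restriction, while a line together with a point gives an $L_2$-restriction, so $\rho^*$ has no lines. Then $\rho^*$ is a rank-$2$ matroid on four elements, and it can be an excluded minor only if it equals $U_{2,4}$, giving $\rho=U_{2,4}^*=A_6$.

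When $\rho(E)=5$, $\rho^*$ has rank $3$ on four elements; by the previous sublemmas \ref{one line three points}--\ref{three lines one point}, the only excluded minor with four elements and rank $3$ is $B_3$. So either $\rho^*$ is not excluded (and hence neither is $\rho$, contradiction), or $\rho^*=B_3$. In the latter case, using the formula $\rho^*(\{e,f\})=4-5+\rho(E-\{e,f\})$ to translate the pair ranks of $B_3$ shows that in $\rho$ the pairs corresponding to points of $B_3$'s $U_{2,3}$-restriction are skew while pairs involving the line of $B_3$ are coplanar, so deleting any element of $\rho$ corresponding to a point of $B_3$ yields a rank-$4$ polymatroid on three lines with exactly one skew pair, namely $A_4$, contradicting $\rho$ being an excluded minor. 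The main obstacle is this $\rho(E)=5$ case, which requires both the earlier sublemmas to pin down $B_3$ as the only candidate for $\rho^*$ and the careful translation through the $2$-duality formula to exhibit the $A_4$-minor.
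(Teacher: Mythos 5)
Your proposal follows essentially the paper's strategy: the nullity bound from Inequality~(\ref{eq:nullity}), ruling out $\rho(E)\le 3$ via $S_2$/$S_3$ restrictions, using Lemma~\ref{lemma: exceptions} to force $S_4$ when $\rho(E)=4$, and invoking $2$-duality (Theorem~\ref{thm:dual closure of P}) together with the earlier classification of small excluded minors when $\rho(E)>4$; the paper compresses your two cases $\rho(E)\in\{5,6\}$ into one sentence by observing that among the already-known excluded minors of ranks $2$ and $3$, only $U_{2,4}$ has a $2$-dual consisting of four lines.

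Two steps need patching, though both are easily fixed. First, in the $\rho(E)=6$ case, ``a line together with a point gives an $L_2$-restriction'' is false as stated: a line with a single point on it is binary-natural ($L_2$ has two points on the line). To rule out a line in the rank-$2$ dual you need two non-parallel points on it, which requires the observation -- analogous to your no-loops remark, and equally easy -- that an excluded minor has no parallel points (adding an element parallel to an existing one cannot destroy binarity of the natural matroid); with that, the three points accompanying the line supply the needed pair. Second, in the $\rho(E)=5$ case, citing sublemmas \ref{one line three points}--\ref{three lines one point} for ``the only excluded minor with four elements and rank $3$ is $B_3$'' silently omits the four-point and four-line compositions; the former is the matroid case ($U_{2,4}$, rank $2$) and the latter is exactly what your first paragraph rules out, so this is only an expository gap, but it should be said to avoid apparent circularity. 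Your subsequent contradiction via an $A_4$-restriction is valid (given that all four elements of $\rho$ are lines, the translated pair ranks do produce $A_4$ after deleting a suitable element), but it is more roundabout than necessary: $\rho^*\cong B_3$ already forces $\rho\cong B_3^*=A_5$, which contains a point, contradicting the four-line hypothesis directly.
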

	
Let $\rho$ have four lines, so $\rho(E)\leq 6$ by Inequality
(\ref{eq:nullity}).  For $e\in E$, we have $\rho_{\del e}\ne S_3$, so
$\rho(E)>3$.  Likewise, if $\rho(E)=4$, then $\rho(E- e)=4$ for all
$e\in E$, so $\rho_{\setminus e}=Z_3$ by Lemma \ref{lemma:
  exceptions}.  Hence, the lines of $\rho$ are pairwise coplanar and
any three span $E$, so $\rho$ is $S_4$.

When $\rho(E)>4$, since $\rho$ falls under the last case to be
treated, we already know all members of $\mathcal{R}_{U_{2,4}}$ that
could be its $2$-dual, namely, those of ranks two and three.  The only
such member of $\mathcal{R}_{U_{2,4}}$ with a $2$-dual having four
lines is $U_{2,4}$, so $\rho$ is $A_6$, the $2$-dual of $U_{2,4}$.

Having identified all excluded minors in $\mathcal{R}_{U_{2,4}}$, we
next show that $S_n$, for $n\geq 3$, are the only excluded minors for
$\mathcal{P}_{U_{2,4}}$ that compress into other excluded minors.

\begin{sublemma}\label{spikelikes}
  If an excluded minor $\rho$ for $\mathcal{P}_{U_{2,4}}$ contains a
  line $e$ such that $\rho_{\downarrow e}$ is also an excluded minor,
  then, up to relabeling, $\rho=S_n$, for some $n\geq 3$, and
  $\rho_{\downarrow e}=S_{n-1}$.
\end{sublemma}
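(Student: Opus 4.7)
The plan is to proceed by strong induction on $|E(\rho)|$. Writing $X = \rho_{\downarrow e}$ (an excluded minor by hypothesis), the base case $|E(\rho)| = 3$ forces $|E(X)| = 2$, so $X = S_2$. Then $\rho$ has three lines $\{a,b,e\}$, and $\rho(\{a,b\})$ must be $3$ (else $a,b$ are parallel in $\rho$, so $\rho_{\setminus e} = S_2$ is a proper excluded-minor of $\rho$), making $\{a,b\}$ span $e$ in $\rho$. The value $\rho(\{a,e\})$ must be $3$ rather than $4$: if $\rho(\{a,e\}) = 4$, then $\rho_{/a}$ has $\rho_{/a}(e) = 2 > 1 = \rho_{/a}(\{b,e\})$, violating monotonicity. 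By symmetry $\rho(\{b,e\}) = 3$ as well, so $\rho = S_3$.

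For the inductive step, $|E(\rho)| \geq 4$, so $|E(X)| \geq 3$. I split on whether $X \in \mathcal{R}_{U_{2,4}}$. If $X \notin \mathcal{R}_{U_{2,4}}$, then $X$ admits a line whose compression is an excluded minor, so the inductive hypothesis forces $X = S_k$ for some $k \geq 3$. If $X \in \mathcal{R}_{U_{2,4}}$, then (using $|E(X)| \leq 4$ for all members of $\mathcal{R}_{U_{2,4}}$, as observed before the parameter table earlier in the proof) $X$ is one of $\{U_{2,4}, L_2, A_3, B_3, A_4, B_4, A_5, A_6\}$, as identified by Sublemmas \ref{one line three points}--\ref{four lines}.

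In the generic case $X = S_k$ for $k \geq 3$, I would show $\rho = S_{k+1}$. The central step establishes that no proper subset $Y \subseteq E \setminus e$ spans $e$ in $\rho$. For $|Y| = 2$: if $Y$ spanned $e$, then $\rho(Y) = 4$ with $Y$ a skew pair, and the restriction $\rho|(Y \cup \{e\})$ would be a rank-$4$ polymatroid with three lines---a proper minor of $\rho$ (using $|E(\rho)| \geq 4$), hence binary-natural---so by Lemma \ref{lemma: exceptions} it would have to equal $Z_3$ (pairwise coplanar lines), contradicting the skew pair. An upward induction on $|Y|$ via submodularity (applied with $A = Y \setminus \{y'\}$ and $B = \{y', y''\}$ for distinct $y', y'' \in Y$, together with the pair result) rules out all larger proper subsets. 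Finally, $E \setminus e$ must itself span $e$ (else $\rho_{\setminus e} = S_k$ is a proper excluded-minor of $\rho$), and further contraction arguments (e.g., examining $\rho_{/e}$, and ruling out any triple $\{f_i, f_j, e\}$ of pairwise-coplanar lines by noting the resulting $S_3$-restriction) pin $\rho$'s rank function to that of $S_{k+1}$.

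The remaining work is to rule out $X \in \{U_{2,4}, L_2, A_3, B_3, A_4, B_4, A_5, A_6\}$. When $\rho$'s parameters (number of points, number of lines) lie in the $\mathcal{R}_{U_{2,4}}$ table---cases $X \in \{L_2, A_3, A_4\}$---the relevant sublemma identifies $\rho$ as $B_4$, $A_5$, or an element of $\{S_4, A_6\}$; direct computation of line compressions of each candidate (using Corollary \ref{cor: natural matroid of compression}) shows none equals $X$, a contradiction. For the out-of-table cases $X \in \{U_{2,4}, B_3, B_4, A_5, A_6\}$, I would argue that $\rho_{\setminus e}$ or $\rho_{/e}$ fails to be binary-natural: for instance, when $X = U_{2,4}$, subadditivity forces the four points of $X$ to have pairwise rank $2$ in $\rho$, so $\rho_{\setminus e}$ contains $U_{2,4}$ as a restriction of its natural matroid. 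The main obstacle will be these bespoke out-of-table arguments, each requiring submodularity bookkeeping and appeals to Lemma \ref{lemma: exceptions}, but each individual case reduces to a short computation.
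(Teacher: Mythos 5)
Your overall architecture --- classify $\rho_{\downarrow e}$ as either a member of $\mathcal{R}_{U_{2,4}}$ or, by induction, some $S_k$, and then analyze decompressions in each case --- is essentially the paper's strategy repackaged as a strong induction, and your ``in-table'' reductions are clean (once \ref{two lines two points}, \ref{three lines one point}, and \ref{four lines} force $\rho$ to be $B_4$, $A_5$, or one of $S_4,A_6$, you can finish at once: all of these except $S_4$ lie in $\mathcal{R}_{U_{2,4}}$, so none of their line-compressions is an excluded minor). But two steps have genuine gaps. First, in the case $X=U_{2,4}$: subadditivity does give $\rho(\{x,y\})=2$ for each pair of points, but this does not produce a $U_{2,4}$-restriction in the natural matroid of $\rho_{\setminus e}$, because the four points need not have joint rank $2$ in $\rho$ --- they can have joint rank $3$ with every triple spanning $e$ (so compression still collapses them to $U_{2,4}$), in which case $\rho_{\setminus e}$ restricted to the points is $U_{3,4}$, which is binary. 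The contradiction must come from elsewhere (the paper's route: at most one point lies on $e$, and then three non-collinear points off $e$ give an $L_2$-minor after contracting one, while three collinear ones give a $B_3$-restriction). More broadly, your remaining out-of-table cases ($B_3$, $B_4$, and --- since you forgo the $2$-duality reduction via Lemma \ref{lem:commute} --- also $A_5$ and $A_6$, whose decompressions live in ranks $6$ and $7$) are not short computations; they constitute most of the real work, and without duality the $A_5$ and $A_6$ cases must be done from scratch.

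Second, the generic case $X=S_k$ is incomplete at its hardest point. Your argument correctly pins down $\rho$ on subsets of $E-e$ (no proper subset spans $e$, while $E-e$ does), but you must still show $\rho(Y\cup e)=\rho(Y)+1$ for every nonempty proper $Y\subseteq E-e$, i.e., that $e$ is not skew to the other lines. Nothing in your sketch rules this out: the proposed ``$S_3$-restriction from a pairwise-coplanar triple'' targets the wrong configuration, since in $S_{k+1}$ with $k\geq 3$ every triple of lines is pairwise coplanar of rank $4$ (that is $Z_3$, which is binary-natural), whereas a triple $\{e,f,g\}$ with $e$ skew to both $f$ and $g$ has rank $5$ by Lemma \ref{lemma: exceptions} and is locally binary-natural, so no small restriction excludes it. The paper closes this with a separate argument: $A_4$-exclusion forces $e$ to be skew to all of the other lines or to none, and then a minimal set $Z$ with $\rho(Z\cup e)=\rho(Z)+1$ yields $\rho_{/e}|Z\cong S_{|Z|}$, a forbidden proper minor. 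Some argument of this kind is indispensable and is missing from your proposal.
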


If $(E, \rho)$ and $(E\cup e, \rho')$ are polymatroids and
$\rho'_{\downarrow e}=\rho$, we call $\rho'$ a \emph{decompression} of
$\rho$.  Compressing a point is equivalent to contracting it, so we
assume that $\rho'(e)=2$.  The only excluded minor for
$\mathcal{P}_{U_{2,4}}$ with parallel lines is $S_2$, so
$\rho(f)=\rho'(f)$, for all $f\in E$, by Equation
(\ref{eq:compression}).  If $\rho'(E)<\rho'(E\cup e)$, then
$\rho=\rho'_{\setminus e}$, which contradicts $\rho'$ being an
excluded minor.  Thus, we can assume that $\rho'(E)=\rho'(E\cup e)$,
and so $\rho(E)+1=\rho'(E)$.  With that assumption, the rank of $e$ in
the $2$-dual of $\rho'$ is also $2$, so Lemma \ref{lem:commute}
implies that $\rho'$ is a decompression of $\rho$ if and only if the
$2$-dual of $\rho'$ is a decompression of the $2$-dual of $\rho$.

We first show that no excluded minor for $\mathcal{P}_{U_{2,4}}$ is a
decompression of any excluded minor
$\rho \in \mathcal{R}_{U_{2,4}}-\{S_2\}$.  By $2$-duality, it suffices
to treat $\rho\in \{U_{2,4}, L_2, A_3,B_3,B_4\}$.  In the proof of
each assertion below, $\rho'_{\downarrow e}=\rho$ where $\rho'(e)=2$.

\begin{subsublemma}
  No excluded minor for $\mathcal{P}_{U_{2,4}}$ is a decompression of
  $U_{2,4}, L_2$, or $A_3$.
\end{subsublemma}

If $\rho=U_{2,4}$, then in $\rho'$, either two points lie on $e$,
giving an $L_2$-restriction, or at least three points are not on $e$;
in the latter case, if three such points are collinear, then $B_3$ is
a restriction, otherwise contracting a point gives an $L_2$-minor.  If
$\rho=L_2$, then $\rho'$ consists of two lines and two points in rank
$3$, and by \ref{two lines two points}, there is no such excluded
minor for $\mathcal{P}_{U_{2,4}}$.  If $\rho=A_3$, then $\rho'$
consists of three lines and a point in rank $4$, and by \ref{three
  lines one point}, there is no such excluded minor for
$\mathcal{P}_{U_{2,4}}$.

\begin{subsublemma}
  No excluded minor for $\mathcal{P}_{U_{2,4}}$ is a decompression of
  $B_3$.
\end{subsublemma}

Now let $\rho=B_3$ and let $f$ be the line in $B_3$.  We show that
assuming that $\rho'$ is an excluded minor for $\mathcal{P}_{U_{2,4}}$
gives the contradiction that some proper minor of $\rho'$ is in
$\mathcal{R}_{U_{2,4}}$.  At most one point of $\rho'$ is on $e$ since
$\rho'$ has no $L_2$-restriction; none are on $f$.  If a point is on
$e$, let it be $a$; otherwise, let $a$ be any point.  Let $b$ and $c$
be the other points.  Now $\rho'$ has no parallel points, is
connected, and has rank $4$, so $\rho'_{\setminus a}$ has rank $4$,
and so $\rho'_{\setminus a}$ is $Z_{2,2}$ by Lemma \ref{lemma:
  exceptions}.  Therefore if $a$ is on $e$, then
$\rho'_{\setminus b/c}=A_3$.  If $a$ is not on $e$, then both
$\rho'_{\setminus b}$ and $\rho'_{\setminus c}$ are $Z_{2,2}$, so both
of $b$ and $c$ are in the plane spanned by $a$ and $f$, and in the
plane spanned by $a$ and $e$; thus, $a$, $b$, and $c$ are collinear,
but this gives $B_3$ as a restriction.

\begin{subsublemma}\label{sslA4B4}
  No excluded minor for $\mathcal{P}_{U_{2,4}}$ is a decompression of
  $B_4$.
\end{subsublemma}

Let $\rho=B_4$ where $f$ and $g$ are the lines and $a$ and $b$ are the
points.  Then $\rho'$ would consist of three lines and two points in
rank $5$.  Assume that $\rho'$ is an excluded minor for
$\mathcal{P}_{U_{2,4}}$.  At most one of $a$ and $b$ can be on $e$
since $\rho'$ has no $L_2$-restriction; let $a$ not be on $e$.  Now
$\rho'_{/a}$ is a rank-$4$ binary-natural polymatroid with three lines
and one point.  Also, $\rho'$ is connected, so $\rho'_{\setminus b}$
has rank $5$, so $\rho'_{/a\setminus b}$ has rank $4$, and so Lemma
\ref{lemma: exceptions} gives $\rho'_{/a\setminus b}=Z_3$.  Thus,
$\rho'_{/a}$ is $Z_3$ with the point $b$ on all lines (the other
options for $b$ yield $A_3$, $S_3$, or $S_2$ as a minor, as in the
first paragraph of \ref{three lines one point}).  Thus,
$\rho'_{/a}(\{b,f,g\})=3$, so $\rho'(\{a,b,f,g\})=4$, contrary to
having $\rho'(E)=\rho'(E\cup e)$, as noted above.

To complete the proof of Theorem \ref{thm: main theorem}, we analyze
the decompressions of $S_n$.

\begin{subsublemma}
  For $n\geq 2$, the only excluded minor for $\mathcal{P}_{U_{2,4}}$
  that is a decompression of $S_n$ is $S_{n+1}$.
\end{subsublemma}

Let $\rho'$ be an excluded minor for $\mathcal{P}_{U_{2,4}}$ having a
line $e$ with $\rho'_{\downarrow e}=\rho=S_n$.  If $n=2$, then $\rho'$
has rank $3$ and three lines, no two parallel, so $\rho'=S_3$.  Now
consider $n>2$.  As shown earlier, $\rho'(E\cup e)=\rho'(E)=n+1$.  Let
$f$ and $g$ be distinct lines in $E$, so $\rho(\{f,g\})=3$.  If
$\rho'(\{f,g\})=4$, then $\rho'(\{e,f,g\})=4$, but, by Lemma
\ref{lemma: exceptions}, this gives the contradiction that
$\rho'_{|\{e,f,g\}}\not\in \mathcal{P}_{U_{2,4}}$; thus,
$\rho'(\{f,g\})=3$.  Consider $X$ with
$\emptyset\subsetneq X\subsetneq E$, so $\rho(X)=|X|+1$.  All pairs of
lines in $X$ are coplanar in $\rho'$, so $\rho'(X)\leq |X|+1$; also,
$\rho(X)\leq \rho'(X)$, and so $\rho'(X)=|X|+1=\rho(X)$.  Thus, $X$
does not span $e$, so $\rho'(X\cup e) >\rho'(X)$.

Lastly, we show that $\rho'(X\cup e) =|X\cup e|+1$, that is,
$\rho'(X\cup e) =\rho'(X)+1$, if $\emptyset\subsetneq X\subsetneq E$.
Let $\mathcal{I}=\{ X\subseteq E\,:\, \rho'(X\cup e)=\rho'(X)+2\}$.
Note that if $X\in\mathcal{I}$ and $Y\subseteq X$, then
$Y\in\mathcal{I}$.  Since $\rho'(X\cup e) >\rho'(X)$ for
$X\subsetneq E$, to show that $\rho'=S_{n+1}$, it suffices to show
that $\mathcal{I}=\{\emptyset\}$.  Assume, instead, that
$\mathcal{I}\ne \{\emptyset\}$.  Thus, $\rho'(\{e,f\})=4$ for some
$f\in E$.  For any $g\in E-f$, we have $\rho'(\{e,g\})\ne 3$ since
$\rho'_{|\{e,f,g\}}$ cannot be $A_4$, so $\rho'(\{e,g\})=4$, and so
$\{g\}\in\mathcal{I}$.  Also, $n=\rho(E-g)=\rho'(E-g)$ and
$\rho'(E\cup e)=n+1$, so $E-g\not\in\mathcal{I}$.  Let $Z$ be a
minimal subset of $E$ not in $\mathcal{I}$.  Thus, $2\leq |Z|<n$ and
$\rho'(Z\cup e)=\rho'(Z)+1=|Z|+2$.  Then $\rho'_{/e|Z}$ has rank
$|Z|$, and for all $Y\subsetneq Z$, we have
$\rho'_{/e|Z}(Y)=\rho'(Y)$; since $\rho'(\emptyset)=0$ and
$\rho'(Y)=|Y|+1$ if $Y\ne\emptyset$, we get $\rho'_{/e|Z}= S_{|Z|}$.
This contradicts $\rho'$ being an excluded minor, so
$\mathcal{I}=\{\emptyset\}$, as needed.
\end{proof}

\addtocounter{thm}{2}

Since the set of excluded minors for $\mathcal{P}_{U_{2,4}}$ contains
an infinite family and that family is related by compression, we note
a corollary of Theorem \ref{thm: main theorem} using a slight
variation of minors.  We define a polymatroid $\rho_1$ to be a
\emph{c-minor} of a polymatroid $\rho_2$ if $\rho_1$ can be obtained
from $\rho_2$ through a series of deletions, contractions, and
compressions.

\begin{cor}
  The excluded c-minors for $\mathcal{P}_{U_{2,4}}$ are the excluded
  minors for $\mathcal{P}_{U_{2,4}}$, except for $S_n$ for $n\geq 3$.
\end{cor}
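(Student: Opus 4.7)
The plan is to verify the two containments separately. Since every proper minor is a proper c-minor, every excluded c-minor is automatically an excluded minor, so the excluded c-minors form a subset of the list in Theorem~\ref{thm: main theorem}; it remains to decide, for each excluded minor, whether it qualifies as an excluded c-minor.

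The preliminary step is to show that $\mathcal{P}_{U_{2,4}}$ itself is closed under compression (and hence under arbitrary c-minor operations). This follows at once from Lemma~\ref{lem: natcomp}: for any polymatroid $(E,\rho)$ and any $e \in E$ with $\rho(e) > 0$, the natural matroid $M_{\rho_{\downarrow e}}$ is a minor of $M_\rho$, while if $e$ is a loop then $\rho_{\downarrow e} = \rho_{\setminus e}$. In either case, $M_\rho$ binary forces $M_{\rho_{\downarrow e}}$ binary. Combined with minor-closure, $\mathcal{P}_{U_{2,4}}$ is closed under arbitrary sequences of deletions, contractions, and compressions.

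For the excluded minors $S_n$ with $n \geq 3$: Example~\ref{ex: spikelike compression} gives $(S_n)_{\downarrow e} = S_{n-1}$ for every $e$, and $S_{n-1} \notin \mathcal{P}_{U_{2,4}}$ by Proposition~\ref{prop:spikelike}; so $S_n$ has a proper c-minor outside $\mathcal{P}_{U_{2,4}}$ and hence is not an excluded c-minor. Conversely, for every other excluded minor $\rho$, Theorem~\ref{thm: main theorem} guarantees $\rho \in \mathcal{R}_{U_{2,4}}$, i.e., compressing any line of $\rho$ yields a polymatroid in $\mathcal{P}_{U_{2,4}}$. Any proper c-minor $\rho'$ of $\rho$ arises by applying one c-minor operation to $\rho$ and then a (possibly empty) further sequence of such operations. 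The first operation is either (i) a deletion, a contraction, or the compression of a loop or a point (the latter two being deletion and contraction by definition), in which case the intermediate polymatroid is a proper minor of $\rho$ and hence lies in $\mathcal{P}_{U_{2,4}}$ by minor-closure; or (ii) the compression of a line $e$, in which case $\rho_{\downarrow e} \in \mathcal{P}_{U_{2,4}}$ because $\rho \in \mathcal{R}_{U_{2,4}}$. In either case the intermediate polymatroid lies in $\mathcal{P}_{U_{2,4}}$, and closure of $\mathcal{P}_{U_{2,4}}$ under c-minor operations then forces $\rho' \in \mathcal{P}_{U_{2,4}}$. Thus each such $\rho$ is an excluded c-minor.

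There is no serious obstacle here: the work is already done by Theorem~\ref{thm: main theorem}, which identifies exactly which excluded minors lie in $\mathcal{R}_{U_{2,4}}$, and by Example~\ref{ex: spikelike compression}, which shows that the infinite family $\{S_n\}_{n\geq 3}$ collapses under compression onto the excluded minor $S_2$.
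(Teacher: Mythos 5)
Your proof is correct and follows exactly the route the paper intends for this corollary (which it leaves as an immediate consequence of Theorem \ref{thm: main theorem}): compression-closure of $\mathcal{P}_{U_{2,4}}$ via Lemma \ref{lem: natcomp}, the collapse $(S_n)_{\downarrow e}=S_{n-1}$ from Example \ref{ex: spikelike compression}, and the theorem's assertion that every other excluded minor lies in $\mathcal{R}_{U_{2,4}}$. Nothing is missing.
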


\section{Series-parallel matroids and $M(K_4)$}\label{section:K_4}

Let $\mathcal{P}_{M(K_4)}$ be the class of $2$-polymatroids whose
natural matroids have no $M(K_4)$-minor.  We will find the excluded
minors for this class.  Our motivation is series-parallel matroids
(see \cite{series-parallel,oxley}).  The excluded minors for the class
of series-parallel matroids are $U_{2,4}$ and $M(K_4)$.  Since we
already have the excluded minors for $\mathcal{P}_{U_{2,4}}$, finding
those for $\mathcal{P}_{M(K_4)}$ will help us find those for the class
of $2$-polymatroids whose natural matroids are series-parallel.

We use the strategy of the proof in Section \ref{section:main
  theorem}.  Let $\mathcal{R}_{M(K_4)}$ be the set of excluded minors
$\rho$ for $\mathcal{P}_{M(K_4)}$ such that
$\rho_{\downarrow e}\in \mathcal{P}_{M(K_4)}$ for all lines $e$ of
$\rho$.  By Lemma \ref{lemma: compression of excluded minor}, an
excluded minor is in $\mathcal{R}_{M(K_4)}$ if and only if no
compression of it by a line is an excluded minor for
$\mathcal{P}_{M(K_4)}$.  Once we find $\mathcal{R}_{M(K_4)}$, we can
find all other excluded minors for $\mathcal{P}_{M(K_4)}$ by looking
at decompressions of those in $\mathcal{R}_{M(K_4)}$.

The arguments used at the start of Section \ref{section:compression}
adapt as follows to give information about an excluded minor
$(E, \rho)$ in $\mathcal{R}_{M(K_4)}$.  When $\rho(e)=1$, the
corresponding set in $M_\rho$ is $X_e=\{e_1\}$ and $e_1$ is in each
$M(K_4)$-minor of $M_\rho$.  If $\rho(e)=2$, then $X_e=\{e_1, e_2\}$,
and to get an $M(K_4)$-minor of $M_\rho$, we must either
\begin{enumerate}
\item delete one of $e_1$ and $e_2$ and contract the other,
\item delete one of $e_1$ and $e_2$ and have the other in the
  $M(K_4)$-minor,
\item contract one of $e_1$ and $e_2$ and have the other in the
  $M(K_4)$-minor, or
\item have both $e_1$ and $e_2$ in the $M(K_4)$-minor.
\end{enumerate}
Case (1) is compression, which does not occur for an excluded minor in
$\mathcal{R}_{M(K_4)}$.  We claim that of the three other cases, only
case (3) can occur.

\begin{lemma}\label{lemma: only contraction}
  Let $(E,\rho)\in\mathcal{R}_{M(K_4)}$.  Let $A$ and $B$ be disjoint
  subsets of $E(M_\rho)$ such that $M_\rho\setminus A/B$ is isomorphic
  to $M(K_4)$.  For $e\in E$, if $\rho(e)=2$, then $|X_e\cap A|=0$ and
  $|X_e\cap B|=1$.
\end{lemma}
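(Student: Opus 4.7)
The plan is to argue by contradiction, examining each of the six possible values of $(|X_e\cap A|,|X_e\cap B|)$ subject to $|X_e\cap A|+|X_e\cap B|\leq 2$, and ruling out all but $(0,1)$. Five of the six fall to the minor arguments immediately preceding the lemma, combined with basic properties of $M(K_4)$.

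In detail, if $|X_e\cap A|=2$ then $M_\rho\setminus A/B$ is a minor of $M_{\rho_{\setminus e}}=M_\rho\setminus X_e$, which lies in $\mathcal{P}_{M(K_4)}$ since $\rho$ is an excluded minor, a contradiction. If $|X_e\cap B|=2$ then the minor sits inside $M_\rho/X_e$; since $M_{\rho_{/e}}=M_\rho/X_e\,|\,U$ differs from $M_\rho/X_e$ only by loops and parallel elements and $M(K_4)$ is simple, $M_{\rho_{/e}}$ also has an $M(K_4)$-minor, a contradiction. If $|X_e\cap A|=|X_e\cap B|=1$, then after applying the clonal swap if necessary we may assume $e_1\in B$ and $e_2\in A$; the minor then sits inside $M_\rho/e_1\setminus e_2$, which by Corollary~\ref{cor: natural matroid of compression} equals $M^2_{\rho_{\downarrow e}}$ up to loops and parallels, so $M_{\rho_{\downarrow e}}$ has an $M(K_4)$-minor, contradicting $\rho\in\mathcal{R}_{M(K_4)}$. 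Finally, if $|X_e\cap(A\cup B)|=0$ then the clonal swap $\sigma=(e_1,e_2)$ of $M_\rho$ fixes $A$ and $B$ pointwise, so $e_1$ and $e_2$ descend to clones in $M_\rho\setminus A/B\cong M(K_4)$; but $\operatorname{Aut}(M(K_4))\cong S_4$ acts on edges by vertex permutations, and no single transposition of two distinct edges is induced this way, so $M(K_4)$ has no non-trivial pair of clones, a contradiction.

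The main obstacle is the case $(|X_e\cap A|,|X_e\cap B|)=(1,0)$; say $e_1\in A$ and $e_2$ lies in the ground set of the $M(K_4)$-minor. Setting $N=M_\rho\setminus(A\setminus\{e_1\})/B$ gives $N\setminus e_1\cong M(K_4)$, and since $e_1,e_2$ are still clones in $N$, the swap yields $N\setminus e_2\cong M(K_4)$ as well. The first subgoal is to show $e_1\parallel e_2$ in $N$: cloneness rules out $e_1$ being a loop of $N$ (else $e_2$ would be a loop in $M(K_4)=N\setminus e_1$), a coloop (else $e_2$ would be a coloop, but $M(K_4)$ is connected), or parallel to any $z\in E(N)-\{e_1,e_2\}$ (else $e_2\parallel z$ in $M(K_4)$, violating simplicity). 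If $e_1$ and $e_2$ were independent in $N$, then $e_1$ would lie in some $3$-circuit $C$; applying $\sigma$ and circuit elimination produces further $3$-circuits forcing $e_1$ to lie on every triangle of $M(K_4)=N\setminus e_1$ through $e_2$. Since the two triangles of $M(K_4)$ through any edge meet only in that edge, we obtain $e_1\in\{e_2\}$, a contradiction. Hence $e_1\parallel e_2$ in $N$.

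To finish, $e_1\parallel e_2$ in $N$ translates to $r_{M_\rho}(X_e\cup B)=r_{M_\rho}(B)+1$, so $\cl_{M_\rho}(B)$ meets the line $\cl_{M_\rho}(X_e)$ in a rank-$1$ flat containing some element $p$. Cloneness of $e_1,e_2$ (the swap fixes $B$) forces $p\notin X_e$, and the only elements of $E$ whose natural-matroid points lie in $\cl_{M_\rho}(X_e)$ are points on $e$ and lines parallel to $e$, so $p\in X_f$ for such an $f$. Since $M(K_4)$ has no loops, $p\in A\cup B$ (otherwise $p$ would be a loop present in the simple matroid $M_\rho\setminus A/B$). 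If $f$ is a point on $e$, then $X_f=\{p\}$ lies entirely in $A$ or in $B$, which yields an $M(K_4)$-minor of $M_{\rho_{\setminus f}}$ or $M_{\rho_{/f}}$, a contradiction. If $f$ is a line parallel to $e$, then the four points $e_1,e_2,f_1,f_2$ lie in general position on a common line in $M_\rho$ and are mutually clones; cloneness of $f_1,f_2$ combined with loop-freeness of $M(K_4)$ forces the other element of $X_f$ also into $A\cup B$, so $(|X_f\cap A|,|X_f\cap B|)$ must be $(2,0)$, $(0,2)$, or $(1,1)$, each of which gives a contradiction by the corresponding case already dispatched for $f$.
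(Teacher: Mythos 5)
Most of your case analysis is sound and parallels the paper: the cases $|X_e\cap A|=2$, $|X_e\cap B|=2$, and $|X_e\cap A|=|X_e\cap B|=1$ are correctly dispatched via $M_{\rho_{\setminus e}}$, $M_{\rho_{/e}}$, and Corollary \ref{cor: natural matroid of compression} together with $\rho\in\mathcal{R}_{M(K_4)}$; the ``both in the minor'' case is correctly killed by the fact that $M(K_4)$ has no pair of clones; and in the remaining case your first subgoal, that $e_1$ and $e_2$ must be parallel in $N=M_\rho\setminus(A\setminus\{e_1\})/B$, is true and is essentially the paper's argument (cloneness puts $e_1$ in the closure of each of the two triangles of $N\setminus e_1$ through $e_2$, and those closures meet in a flat of rank at most $1$; your phrase ``$e_1\in\{e_2\}$'' should really be ``$r_N(\{e_1,e_2\})\le 1$, contradicting independence,'' but that is only a slip of wording).

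The genuine gap is in how you finish from that parallelism. You translate $e_1\parallel e_2$ in $N$ into $r_{M_\rho}(X_e\cup B)=r_{M_\rho}(B)+1$ (correct) and then assert that $\cl_{M_\rho}(B)$ must meet the line $\cl_{M_\rho}(X_e)$ in a rank-$1$ flat containing an element $p$ of $M_\rho$. That inference is false in general: matroids are not modular, and two flats whose union has deficient rank need not contain a common element (e.g.\ two disjoint pairs in $U_{3,4}$ are non-skew but their closures are disjoint). Submodularity only bounds $r(\cl(X_e)\cap\cl(B))$ above by $1$; it gives no element $p$. Everything after this point -- classifying $p$ as a point on $e$ or an element of a line parallel to $e$, and then recycling the earlier cases for $X_f$ -- rests on the existence of such a $p$ inside $M_\rho$, so the argument does not close. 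The paper avoids exactly this trap by locating the witness in an intermediate minor rather than in $M_\rho$: since $e_1,e_2$ are independent in $M_\rho$ but parallel in $N$, there is a first single-element contraction, of some $f_1\in B$, that makes them parallel; in the minor just before that contraction one has $\{e_1,e_2,f_1\}\cong U_{2,3}$, so $f_1\in\cl(\{e_1,e_2\})$ there. Then $\rho\in\mathcal{R}_{M(K_4)}$ and the already-settled cases force $f_1$ to come from a pair $X_f=\{f_1,f_2\}$ with $f_2$ in the $M(K_4)$-minor, cloneness puts $f_2$ in the same line, and contracting $f_1$ makes $e_2$ and $f_2$ parallel in the final $M(K_4)$-minor, the desired contradiction. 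Note that this $f$ need not be a point on $e$ or a line parallel to $e$ in $\rho$, which is further evidence that your intended witness $p$ cannot in general be found in $M_\rho$ itself; some argument of the ``first moment of parallelization'' type is needed to repair your proof.
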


\begin{proof}
  Let $e\in E$ be a line, so $X_e=\{e_1, e_2\}$.  Since $e_1$ and
  $e_2$ are clones in $M_\rho$, they are clones in each minor of
  $M_\rho$ that contains both.  No two elements of $M(K_4)$ are
  clones, so case (4) cannot occur.  To finish the proof, we now rule
  out case (2).
	
  Assume that $e_1\in A$ and $e_2$ is in the $M(K_4)$-minor.  Then
  $M_\rho$ has a minor $M$ such that $M\setminus e_1$ is isomorphic to
  $M(K_4)$.  Let $L_1$ and $L_2$ be the $3$-point lines of
  $M\setminus e_1$ that contain $e_2$.  Since $e_1$ and $e_2$ are
  clones, both $L_1$ and $L_2$ span $e_1$ in $M$, so $e_1$ and $e_2$
  are parallel in $M$.  For $e_1$ and $e_2$ to be parallel in $M$ but
  not in $M_\rho$, some minor $N$ of $M_\rho$ must have an element
  $f_1$ for which $N|\{e_1,e_2,f_1\}$ is $U_{2,3}$ and $M$ is a minor
  of $N/f_1$.  Thus, $f_1\in\cl_N(\{e_1,e_2\})$.  Now $N/f_1$ has an
  $M(K_4)$-minor.  Since $\rho\in \mathcal{R}_{M(K_4)}$, the only
  points we can contract to get the $M(K_4)$-minor fall under case (3)
  above, so $f_1$ comes from the set $X_f=\{f_1,f_2\}$ for some
  $f\in E$ with $\rho(f)=2$, and $f_2$ is in the $M(K_4)$-minor.
  Since $f_1$ and $f_2$ are clones in $M_\rho$, they are clones in
  $N$, and since $f_1\in\cl_N(\{e_1,e_2\})$, we have
  $f_2\in\cl_N(\{e_1,e_2\})$.  Thus, $e_1$, $e_2$, and $f_2$ are
  parallel in $N/f_1$.  Since $e_2$ and $f_2$ are both in the
  $M(K_4)$-minor, they are parallel in that minor, but $M(K_4)$ has no
  parallel elements.  This contradiction completes the proof that case
  (2) does not arise.
\end{proof}

Thus, if $\rho\in \mathcal{R}_{M(K_4)}$ and $e\in E(\rho)$, then
exactly one point $e_1\in X_e$ is in any $M(K_4)$-minor of $M_\rho$, so
$|E(\rho)|=6$.  Hence, we can let $E=E(\rho)=E(M(K_4))$.

Let $A\subseteq E$.  Define a $2$-polymatroid $(E,\rho_A)$ by, for
$X\subseteq E$,
\begin{equation}\label{equation: rho_a}
	\rho_A(X)=r_{M(K_4)}(X)+|X\cap A|.
\end{equation}
Thus, $\rho_\emptyset=r_{M(K_4)}$.  It is easy to check that $\rho_A$
is a $2$-polymatroid and that $\rho_A\in\mathcal{R}_{M(K_4)}$; in
particular, if $A_1\subseteq X_A$ with $|A_1\cap X_e|=1$ for each
$e\in A$, then $M_{\rho_A}/A_1$ is isomorphic to $M(K_4)$. Note that
for any automorphism $\tau$ of $M(K_4)$, the excluded minors $\rho_A$
and $\rho_{\tau(A)}$ are isomorphic.  In Theorem \ref{thm:
  contraction-only cases}, we show that if
$\rho \in \mathcal{R}_{M(K_4)}$ and $A$ is its set of lines, then
$\rho=\rho_A$.  We pave the way by next characterizing the independent
sets of $M_{\rho_A}$.

\begin{lemma}\label{lemma: circuits of rho_A}
  Let $\rho_A$ be as above.  For $S\subseteq X_E$, let
  $S'= \{e\in E\,:\,|S\cap X_e|=\rho_A(e)\}$.  The set $S$ is
  independent in $M_{\rho_A}$ if and only if $S'$ is independent in
  $M(K_4)$.  Also, $S$ is a basis of $M_{\rho_A}$ if and only if $S'$
  is a basis of $M(K_4)$ and $X_e\cap S\ne \emptyset$ for each
  $e\in A$.
\end{lemma}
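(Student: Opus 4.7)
The plan is to work directly from the rank formula \eqref{eq:NatMtdRk}, which here specializes to $r_{M_{\rho_A}}(S)=\min_{B\subseteq E}\bigl(\rho_A(B)+|S-X_B|\bigr)$. Taking $B=\emptyset$ shows $r_{M_{\rho_A}}(S)\leq |S|$, so $S$ is independent if and only if $\rho_A(B)\geq |S\cap X_B|$ for every $B\subseteq E$. Both assertions of the lemma will follow by rewriting this condition in terms of $M(K_4)$.

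Let $T=\{e\in E\,:\, S\cap X_e\neq \emptyset\}$. Since $\rho_A(e)\leq 2$, every $e\in T-S'$ satisfies $|S\cap X_e|=1<\rho_A(e)=2$, so $T-S'\subseteq A$, and consequently $A\cap T=(A\cap S')\sqcup(T-S')$. A direct count of $|S\cap X_B|=\sum_{e\in B}|S\cap X_e|$, splitting $B$ into $B\cap S'$ and $B-S'$, gives
\[
|S\cap X_B|=|B\cap S'|+|B\cap A\cap T|.
\]
Combined with $\rho_A(B)=r_{M(K_4)}(B)+|B\cap A|$ and $|B\cap A|-|B\cap A\cap T|=|B\cap(A-T)|$, the independence condition becomes
\[
r_{M(K_4)}(B)+|B\cap(A-T)|\geq |B\cap S'|\quad\text{for all } B\subseteq E.
\]

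If $S'$ is independent in $M(K_4)$, then $r_{M(K_4)}(B)\geq r_{M(K_4)}(B\cap S')=|B\cap S'|$, so the inequality holds and $S$ is independent in $M_{\rho_A}$. Conversely, if $S'$ is dependent in $M(K_4)$, take $B=S'$; since $S'\subseteq T$, we have $B\cap(A-T)=\emptyset$, and the inequality reduces to $r_{M(K_4)}(S')\geq |S'|$, which fails. This proves the independence equivalence.

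For the basis statement, observe that by Lemma \ref{lemma:shownatural}, $r_{M_{\rho_A}}(X_E)=\rho_A(E)=r_{M(K_4)}(E)+|A|=3+|A|$, so $S$ is a basis iff it is independent with $|S|=3+|A|$. Using the splitting above, $|S|=|S'|+|A\cap T|$. When $S'$ is independent, $|S'|\leq 3$ and $|A\cap T|\leq |A|$, so $|S|=3+|A|$ forces simultaneous equalities, i.e., $S'$ is a basis of $M(K_4)$ and $A\subseteq T$; the latter is exactly the condition $X_e\cap S\neq\emptyset$ for all $e\in A$. Conversely, these two conditions visibly give $|S|=3+|A|$. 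The main point — really the only nontrivial step — is the bookkeeping that reduces the rank-formula inequality to one purely about $M(K_4)$; everything else is a direct consequence.
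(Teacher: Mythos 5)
Your proof is correct. It is worth noting that you take a different route from the paper's. The paper first proves an auxiliary structural fact: if $e\in A$ and $Y\cap X_e=\emptyset$, then any $e_i\in X_e$ is a coloop of $M_{\rho_A}|(Y\cup e_i)$ (this is where the paper invokes the rank formula, via a two-case analysis on whether $e$ lies in a minimizing set $D$). That coloop property lets the paper strip from $S$ all elements $e_i$ with $|S\cap X_e|=1<\rho_A(e)$, reducing independence of $S$ to independence of the clean set $X_{S'}$, whose rank is simply $\rho_A(S')=r_{M(K_4)}(S')+|S'\cap A|$; the basis claim then falls out of the same coloop fact. You instead verify the independence criterion $\rho_A(B)\geq|S\cap X_B|$ for all $B\subseteq E$ directly for arbitrary $S$, via the counting identity $|S\cap X_B|=|B\cap S'|+|B\cap A\cap T|$, which converts the criterion into the purely $M(K_4)$-level inequality $r_{M(K_4)}(B)+|B\cap(A-T)|\geq|B\cap S'|$; I checked the bookkeeping (including the decomposition $A\cap T=(A\cap S')\sqcup(T-S')$, which uses that $\rho_A(e)=1+|\{e\}\cap A|$ since $M(K_4)$ is simple) and it is right, as are the choice $B=S'$ for the converse and the equality-forcing argument $|S|=|S'|+|A\cap T|\leq 3+|A|$ for the basis statement. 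The paper's route yields a reusable structural fact (the cocircuit/coloop behavior of the sets $X_e$ for $e\in A$, which reappears in the proof of Theorem \ref{thm: contraction-only cases}), while yours is more self-contained and handles the independence and basis claims in one uniform computation.
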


\begin{proof}
  We first show that if $Y\subset X_E$ and $e\in A$ (so $\rho_A(e)=2$)
  with $Y\cap X_e=\emptyset$, and if $e_i\in X_e$, then
  $r_{M_{\rho_A}}(Y)< r_{M_{\rho_A}}(Y\cup e_i)$, i.e., $e_i$ is a
  coloop of $M_{\rho_A}|Y\cup e_i$.  To see this, note that by
  Equations (\ref{eq:NatMtdRk}) and (\ref{equation: rho_a}), for any
  $Z\subseteq X_E$,
  $$r_{M_{\rho_A}}(Z)=\min\{r_{M(K_4)}(D)+|D\cap A| +|Z-X_D|\,:\,
  D\subseteq E\}.$$ Fix a set $D\subseteq E$ for which
  $r_{M_{\rho_A}}(Y\cup e_i) = r_{M(K_4)}(D)+|D\cap A| +|(Y\cup e_i)
  -X_D|$.  We get $r_{M_{\rho_A}}(Y)< r_{M_{\rho_A}}(Y\cup e_i)$ since
  if $e\not\in D$, then $|Y -X_D| <|(Y\cup e_i) -X_D|$, so
  $$r_{M_{\rho_A}}(Y)\leq r_{M(K_4)}(D)+|D\cap A| +|Y
  -X_D|<r_{M_{\rho_A}}(Y\cup e_i),$$ while if $e\in D$, then
  $|(D-e)\cap A|<|D\cap A|$, so
  $$r_{M_{\rho_A}}(Y)\leq 
  r_{M(K_4)}(D-e)+|(D-e)\cap A| +|Y -X_{D-e}|<r_{M_{\rho_A}}(Y\cup
  e_i).$$

  Turning to the sets $S$ and $S'$ in the lemma, it follows that (i)
  $S$ is independent in $M_{\rho_A}$ if and only if $X_{S'}$ is, and
  (ii) if $S$ is a basis of $M_{\rho_A}$ and $e\in A$, then
  $S\cap X_e\ne \emptyset$.  Now
  $r_{M_{\rho_A}}(X_{S'}) = \rho_A(S') = r_{M(K_4)}(S')+|S'\cap A|$,
  so $X_{S'}$ is independent in $M_{\rho_A}$ if and only if
  $r_{M(K_4)}(S')+|S'\cap A|=|X_{S'}|$, that is,
  $r_{M(K_4)}(S')=|S'|$, as needed to prove the assertion about
  independent sets.  The assertion about bases now follows.
\end{proof}

\begin{thm}\label{thm: contraction-only cases}
  If $(E,\rho)\in\mathcal{R}_{M(K_4)}$, then $\rho=\rho_A$ where
  $A = \{e\in E\,:\,\rho(e)=2\}$ and $\rho_A$ is given by Equation
  \emph{(\ref{equation: rho_a})}.
\end{thm}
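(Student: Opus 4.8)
The plan is to show that $\rho$ and $\rho_A$ agree on every subset of $E$, using only the structure already in hand: $\rho$ has no loops (otherwise a one‑element deletion keeps the $M(K_4)$‑minor), $E=E(M(K_4))$, and, writing $X_e=\{e_1\}$ for a point $e$ and $X_e=\{e_1,e_2\}$ for a line $e\in A$, there is a set $B=\{e_2:e\in A\}$ with $M_\rho/B\cong M(K_4)$ and with nothing deleted (Lemma~\ref{lemma: only contraction}), the survivors $\{e_1:e\in E\}$ being identified with $E(M(K_4))$. Two easy reductions come first. The set $B$ is independent in $M_\rho$: if a circuit $D\subseteq B$ contained some $g_2$, then, as $g_1,g_2$ are clones with $g_1\notin B$, we would get $g_1\in\cl_{M_\rho}(B)$, making $g_1$ a loop of $M_\rho/B\cong M(K_4)$, which is loopless. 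Independence of $B$ yields, for every $X\subseteq E$, the bound $\rho(X)\ge r_{M(K_4)}(X)+|X\cap A|=\rho_A(X)$, since $r_{M(K_4)}(X)=r_{M_\rho}(X_X\cup C)-|A|$ with $C=\{e_2:e\in A\setminus X\}$ and $r_{M_\rho}(X_X\cup C)\le\rho(X)+|C|$. As $\rho(I)\le\sum_{e\in I}\rho(e)=\rho_A(I)$ for $I$ independent in $M(K_4)$, the two bounds force $\rho=\rho_A$ on the independent sets.

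Next I would reduce everything to circuits. By submodularity the marginal $\rho(X)-\rho(X\setminus x)$ is nonincreasing, so for a dependent $X$, choosing a circuit $\kappa\subseteq X$ and $x\in\kappa$ gives $\rho(X)-\rho(X\setminus x)\le\rho(\kappa)-\rho(\kappa\setminus x)$; since $\kappa\setminus x$ is independent, an induction on $|X|$ collapses the theorem to the single statement that $\rho(\kappa)=\rho_A(\kappa)$ for every circuit $\kappa$ of $M(K_4)$. Because $\rho_A(\kappa)\le\rho(\kappa)\le\sum_{e\in\kappa}\rho(e)=\rho_A(\kappa)+1$, this is equivalent to showing that $X_\kappa$ is \emph{dependent} in $M_\rho$. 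The circuits of $M(K_4)$ are its four triangles and three quadrilaterals, and these behave differently. For a triangle $\kappa$, suppose $X_\kappa$ were independent; then the survivors $S_\kappa=\{e_1:e\in\kappa\}$ are independent in $M_\rho$ but span only a rank‑$2$ flat of $M_\rho/B$, so $M_\rho|(S_\kappa\cup B)$ has nullity $1$. Its unique circuit cannot lie in the independent set $X_\kappa=S_\kappa\cup\{e_2:e\in\kappa\cap A\}$, so it meets $B$ in some $g_2$ with $g\in A\setminus\kappa$; swapping the clones $g_1,g_2$ then gives $g_1\in\cl_{M_\rho}(S_\kappa\cup B)$, i.e.\ $g\in\cl_{M(K_4)}(\kappa)$, contradicting that an edge off a triangle is not on that triangle's three‑point line.

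For a quadrilateral $\kappa$ with diagonals $p,q$, let $\tau_1,\tau_2$ be the two triangles containing $p$, so $\tau_1\cap\tau_2=\{p\}$ and $\tau_1\cup\tau_2=\kappa\cup p$. Submodularity together with the triangle case ($\rho(\tau_i)=\rho_A(\tau_i)$) gives
$\rho(\kappa)\le\rho(\kappa\cup p)\le\rho_A(\tau_1)+\rho_A(\tau_2)-\rho_A(p)=\rho_A(\kappa)+[\,p\in A\,]$,
where the last value is computed from~(\ref{equation: rho_a}). Hence if either diagonal is a point we are done. The remaining case, where both diagonals are lines, is the crux. Here the same submodular estimate, now applied to $X_{\tau_1},X_{\tau_2}$ whose intersection is $X_p=\{p_1,p_2\}$, yields $r_{M_\rho}(X_\kappa\cup\{p_1,p_2\})\le\rho_A(\kappa)+1$; assuming $X_\kappa$ independent forces equality, so $\{p_1,p_2\}\subseteq\cl_{M_\rho}(X_\kappa)$, and symmetrically $\{q_1,q_2\}\subseteq\cl_{M_\rho}(X_\kappa)$. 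Since $\kappa\cup\{p,q\}=E$, this gives $X_E\subseteq\cl_{M_\rho}(X_\kappa)$, whence $\rho(\kappa)=\rho(E)=3+|A|$, contradicting that independence of $X_\kappa$ would make $\rho(\kappa)=4+|\kappa\cap A|<3+|A|$. I expect this quadrilateral‑with‑two‑line‑diagonals case to be the main obstacle: submodularity against the triangles on a diagonal is off by exactly one when that diagonal is a line, so one must instead feed the independence hypothesis back in to drive the diagonals into $\cl_{M_\rho}(X_\kappa)$. Notably, beyond the structure packaged in Lemma~\ref{lemma: only contraction}, the argument is purely about how contraction of $B$ interacts with the clone pairs, and needs no further appeal to minimality.
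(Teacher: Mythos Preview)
Your argument is correct but follows a genuinely different route from the paper's. The paper shows $M_\rho=M_{\rho_A}$ by matching bases: after noting (as you do) that $A_2=\{e_2:e\in A\}$ is independent, it proves that each $X_e$ with $e\in A$ is a \emph{cocircuit} of $M_\rho$, then uses clone‑swapping to reduce any basis to one containing $A_2$, so that the residual part is governed by $M_\rho/A_2\cong M(K_4)$; this is then read off against the explicit basis description of $M_{\rho_A}$ in Lemma~\ref{lemma: circuits of rho_A}. You instead attack $\rho$ directly: the independence of $B$ gives the global inequality $\rho\geq\rho_A$, equality on independent sets is immediate, and a submodularity induction reduces everything to circuits of $M(K_4)$, which you handle separately for triangles (a clone/circuit argument inside $S_\kappa\cup B$) and for quadrilaterals (submodularity against the two triangles on a diagonal, with the extra closure step when both diagonals are lines). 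Your approach is self‑contained and does not need Lemma~\ref{lemma: circuits of rho_A} or the cocircuit observation, at the price of the triangle/quadrilateral case split; the paper's packaging via cocircuits and bases is more uniform and a bit shorter. One small point to make explicit in your write‑up: the equality $\rho(E)=3+|A|$ that you invoke in the final contradiction follows from $r_{M_\rho/B}(A_1)=r_{M(K_4)}(E)=3$ together with the independence of $B$, since $A_1\cup B=X_E$.
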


\begin{proof}
  To show that $\rho=\rho_A$, it suffices to show that $M_\rho$ and
  $M_{\rho_A}$ have the same bases.
  
  Let $X_e = \{e_1,e_2\}$ if $e\in A$, and $X_e = \{e_1\}$ if
  $e\in E-A$.  Set $A_1 =\{ e_1\,:\,e\in E\}$ and
  $A_2 =\{ e_2\,:\,e\in A\}$.  By Lemma \ref{lemma: only contraction},
  there is an isomorphism $\iota: M_\rho/A_2\to M(K_4)$; label
  $M(K_4)$ so that $\iota(e_1)= e$ for all $e_1\in A_1$.

  If $A_2$ contained a circuit $C$ of $M_\rho$, and $e_2\in C$, then
  its clone $e_1$ would be in $\cl_{M_\rho}(C)$ and so would be a loop
  of $M_\rho/A_2$; however, $M(K_4)$ has no loops, so $A_2$ is
  independent in $M_\rho$.  Thus, the bases of $M_\rho/A_2$ are the
  subsets $T$ of $A_1$ for which $T\cup A_2$ is a basis of $M_\rho$.

  We next show that if $e\in A$, then $X_e$ is a cocircuit of
  $M_\rho$; thus, no basis of $M_\rho$ is disjoint from $X_e$.  Let
  $N=M_\rho/(A_2 -e_2)$, so $N$ is a rank-$4$ matroid on seven
  elements in which $e_1$ and $e_2$ are clones, and $N/e_2$ is
  $M(K_4)$.  Thus, $e_1$ and $e_2$ are clones in the dual matroid,
  $N^*$, and $N^*\setminus e_2$ is $M(K_4)$.  It follows that $X_e$ is
  a circuit of $N^*$, so $X_e$ is a cocircuit of $N$, and hence of
  $M_\rho$.

  Consider all subsets $B$ of $X_E$ for which, if $e\in A$, then
  $B\cap X_e\ne\emptyset$.  With such a set $B$, we get another such
  set $B'$ (possibly the same) by, for each $e\in A$ with
  $B\cap X_e=\{e_1\}$, replacing $e_1$ by $e_2$.  Since $e_1$ and
  $e_2$ are clones, $B$ is a basis of $M_\rho$ if and only if $B'$ is.
  Now $A_2\subseteq B'$, so $B'$ is a basis of $M_\rho$ if and only if
  $\iota(B'\cap A_1)$ is a basis of $M(K_4)$.  Note that
  $\iota(B'\cap A_1)= \{e\in E\,:\,|B\cap X_e|=\rho(e)\}$.  It now
  follows that the bases of $M_\rho$ are exactly those of
  $M_{\rho_A}$, as given in Lemma \ref{lemma: circuits of rho_A}.
\end{proof}

By Lemma \ref{lemma:R 2-duality}, since $M(K_4)$ is self-dual,
$\mathcal{R}_{M(K_4)}$ is closed under $2$-duality.  As we show next,
for $A\subseteq E$, the $2$-dual of $\rho_A$ is $\rho_{E-A}$.

\begin{lemma}
  The $2$-dual $\rho^*_A$ of the excluded minor $\rho_A$ in
  $\mathcal{R}_{M(K_4)}$ is isomorphic to $\rho_{E- A}$.
\end{lemma}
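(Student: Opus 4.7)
The plan is to invoke Theorem~\ref{thm: contraction-only cases} after identifying the set of lines of $\rho_A^*$. By Lemma~\ref{lemma:R 2-duality}, the family $\mathcal{R}_{M(K_4)}$ is closed under $2$-duality, so $\rho_A^*$ lies in $\mathcal{R}_{M(K_4)}$. Theorem~\ref{thm: contraction-only cases} then applies and tells us that $\rho_A^* = \rho_{A'}$, where $A' = \{e \in E : \rho_A^*(\{e\}) = 2\}$ is the set of lines of $\rho_A^*$. So everything reduces to computing $A'$.

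For any $e \in E$, the $2$-dual formula combined with the definition of $\rho_A$ gives
\[
  \rho_A^*(\{e\}) = 2 - \rho_A(E) + \rho_A(E - e) = 2 - \mathbf{1}[e \in A] + \bigl(r_{M(K_4)}(E - e) - r_{M(K_4)}(E)\bigr),
\]
where the last equality uses $|(E-e) \cap A| = |A| - \mathbf{1}[e \in A]$ and $|E \cap A| = |A|$. Since $M(K_4)$ has no coloops, $r_{M(K_4)}(E - e) = r_{M(K_4)}(E)$ for every $e$, so the bracketed difference vanishes and $\rho_A^*(\{e\}) = 2 - \mathbf{1}[e \in A]$. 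Thus $e$ is a line of $\rho_A^*$ exactly when $e \notin A$, giving $A' = E - A$. Combined with $\rho_A^* = \rho_{A'}$ from the previous paragraph, this yields $\rho_A^* = \rho_{E - A}$, so in particular $\rho_A^* \cong \rho_{E - A}$, as claimed.

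The subtlety that bears scrutiny is the appeal to Theorem~\ref{thm: contraction-only cases}: that theorem identifies each member of $\mathcal{R}_{M(K_4)}$ with some $\rho_{A'}$ only after labeling $M(K_4)$ via the isomorphism $\iota: M_{\rho_A^*}/A_2 \to M(K_4)$ extracted from the minor structure. A priori this labeling may differ from the one used to define $\rho_A$ at the outset, and the lemma's conclusion of isomorphism (rather than pointwise equality of rank functions on a common ground set) absorbs exactly this discrepancy. Everything else is the short singleton calculation above, which is the main content of the proof.
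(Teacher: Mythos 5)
Your route is genuinely different from the paper's: the paper computes the rank function of $\rho_A^*$ directly from the $2$-duality formula, obtaining $\rho_A^*(X)=r^*(X)+|X-A|$ where $r^*$ is the rank function of $M^*(K_4)$ on the same ground set, and then appeals to the self-duality of $M(K_4)$; you instead invoke the classification in Theorem~\ref{thm: contraction-only cases} and reduce everything to the rank of singletons. Your singleton computation is correct, and the appeals to Lemma~\ref{lemma:R 2-duality} and Theorem~\ref{thm: contraction-only cases} are legitimate at this point in the paper. However, the ``subtlety that bears scrutiny'' which you flag in your final paragraph and then dismiss is a genuine gap, and passing from equality to isomorphism does not absorb it.

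Here is why. Theorem~\ref{thm: contraction-only cases} identifies the ground set of a member of $\mathcal{R}_{M(K_4)}$ with $E(M(K_4))$ via an $M(K_4)$-minor of its natural matroid; for $\rho_A^*$ the underlying labeled copy of $M(K_4)$ that this produces is $(E,r^*)$, not the original $(E,r)$. What your argument actually yields is $\rho_A^*(X)=r^*(X)+|X\cap(E-A)|$, i.e., the construction of Equation~(\ref{equation: rho_a}) applied to $M^*(K_4)$ with distinguished set $E-A$; transporting along an isomorphism $\sigma\colon M^*(K_4)\to M(K_4)$ gives $\rho_A^*\cong\rho_{\sigma(E-A)}$, and this is isomorphic to $\rho_{E-A}$ only if $\sigma(E-A)$ and $E-A$ lie in the same orbit under $\mathrm{Aut}(M(K_4))$. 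That fails when $E-A$ is a triangle or a vertex star: every isomorphism $M^*(K_4)\to M(K_4)$ carries the $3$-element circuits of $M^*(K_4)$ (the vertex stars of $K_4$) to the $3$-element circuits of $M(K_4)$ (the triangles), so it interchanges these two orbits. The gap is fatal rather than cosmetic: take $A$ a triangle, so $E-A$ is the complementary star; then $\rho_A^*(E-A)=2|E-A|-\rho_A(E)+\rho_A(A)=6-6+5=5$, whereas $\rho_{E-A}(E-A)=r(E-A)+3=6$, and since any isomorphism of $2$-polymatroids must carry the line set of $\rho_A^*$ (which is $E-A$, as you correctly computed) onto the line set of $\rho_{E-A}$ (also $E-A$), the two are not isomorphic --- in this case $\rho_A^*\cong\rho_A$. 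The paper's own closing appeal to self-duality runs into the same relabeling issue, so the difficulty is not unique to your approach; but your explicit assertion that the isomorphism in the conclusion ``absorbs exactly this discrepancy'' is precisely the step that breaks.
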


\begin{proof}
  Let $M(K_4)$ be $(E, r)$ and let its matroid dual $M^*(K_4)$ be
  $(E, r^*)$.  For $X\subseteq E$, we have $\rho_A(X)=r(X)+|X\cap A|$
  and $\rho_{E- A}(X)=r(X)+|X- A|$.  Now
  \begin{align*}
    \rho^*_A(X)&=2|X|-\rho_A(E)+\rho_A(E- X)\\
               &=2|X|-r(E)-|A|+r(E- X)+|A- X|\\
               &=|X|-r(E)+r(E- X)+|X- A|\\
               &=r^*(X)+|X-A|.
  \end{align*}
  Since $M(K_4)$ is self-dual, it follows that
  $\rho_A^*$ is isomorphic to $\rho_{E-A}$.
\end{proof}

By Theorem \ref{thm: contraction-only cases} and the remark on
isomorphism before Lemma \ref{lemma: circuits of rho_A}, there are
eleven excluded minors in $\mathcal{R}_{M(K_4)}$:
\begin{itemize}
\item there is a single excluded minor $\rho_{A}$ when $|A|$ is $0$,
  $1$, $5$, or $6$;
\item there are two when $|A|=2$, according to whether or not $A$ is a
  flat;
\item there are three when $|A|=3$, according to whether (i) $A$ is a
  circuit, (ii) $E-A$ is a circuit, or (iii) neither $A$ nor $E- A$ is
  a circuit (case (iii) gives the only self-$2$-dual excluded minor);
  and
\item two when $|A|=4$, according to whether or not $E- A$ is a flat.
\end{itemize}

In Theorem \ref{thm:cocompK4} below, we show that
$\mathcal{P}_{M(K_4)}$ has no other excluded minors.  The proof uses
the following definition and lemma.  A set $X$ in a polymatroid $\rho$
is \emph{independent} if $\rho(x)>0$ for all $x\in X$ and
$$\rho(X) = \sum_{x\in X}\rho(x).$$ Note that  $X$ is independent in
$M(K_4)$ if and only if it is independent in the excluded minor
$\rho_A$ for $\mathcal{P}_{M(K_4)}$.

\begin{lemma}\label{lem:indcomp}
  Let $\rho$ be a polymatroid on a set $E\cup g$. If $X$ is an
  independent set of $\rho_{\downarrow g}$ with
  $\rho(x)=\rho_{\downarrow g}(x)$ for all $x\in X$, then
  $\rho(X)=\rho_{\downarrow g}(X)$ and so $X$ is an independent set of
  $\rho$.
\end{lemma}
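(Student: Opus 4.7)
The plan is to argue directly from Equation~(\ref{eq:compression}), which gives the dichotomy that $\rho_{\downarrow g}(Y)$ equals either $\rho(Y)$ or $\rho(Y)-1$, the latter occurring precisely when $Y$ spans $g$ in $\rho$. In particular, $\rho_{\downarrow g}(Y)\leq \rho(Y)$ always, and the hypothesis $\rho(x)=\rho_{\downarrow g}(x)$ for each $x\in X$ says exactly that no singleton $\{x\}$ with $x\in X$ spans $g$ in $\rho$.

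First I would note that it suffices to show $\rho(X)=\rho_{\downarrow g}(X)$: once this is established, we get
\[
\rho(X)=\rho_{\downarrow g}(X)=\sum_{x\in X}\rho_{\downarrow g}(x)=\sum_{x\in X}\rho(x),
\]
where the middle equality uses independence of $X$ in $\rho_{\downarrow g}$ and the last uses the hypothesis; this is the independence of $X$ in $\rho$.

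The core step is to rule out the possibility that $X$ spans $g$ in $\rho$. If $X$ does not span $g$, then Equation~(\ref{eq:compression}) immediately gives $\rho(X)=\rho_{\downarrow g}(X)$. Suppose instead, for contradiction, that $X$ spans $g$. Then $\rho_{\downarrow g}(X)=\rho(X)-1$, so by independence in $\rho_{\downarrow g}$ and the hypothesis,
\[
\rho(X)-1=\rho_{\downarrow g}(X)=\sum_{x\in X}\rho_{\downarrow g}(x)=\sum_{x\in X}\rho(x).
\]
However, iterating submodularity of $\rho$ gives $\rho(X)\leq \sum_{x\in X}\rho(x)$, which would force $\rho(X)\leq \rho(X)-1$, a contradiction.

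There is no real obstacle here; the whole proof is a two-line case split once one unpacks the definition of compression. The only thing to be careful about is invoking the hypothesis $\rho(x)=\rho_{\downarrow g}(x)$ correctly to interpret it as ``$\{x\}$ does not span $g$,'' and then using subadditivity of $\rho$ on singletons (a standard consequence of submodularity together with $\rho(\emptyset)=0$) to derive the contradiction in the spanning case.
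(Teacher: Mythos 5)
Your proof is correct and is essentially the paper's argument: both rest on the same three facts (compression does not increase rank, subadditivity $\rho(X)\leq\sum_{x\in X}\rho(x)$, and the hypothesis plus independence in $\rho_{\downarrow g}$), with the paper collapsing them into a single chain of inequalities rather than your case split on whether $X$ spans $g$. The only pedantic caveat is that your appeal to Equation~(\ref{eq:compression}) presupposes $\rho(g)>0$; when $g$ is a loop we have $\rho_{\downarrow g}=\rho_{\setminus g}$ and the claim is immediate, so nothing is lost.
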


\begin{proof}
  This follows from the inequalities
  $$\rho_{\downarrow g}(X)\leq \rho(X)\leq \sum_{x\in X}\rho(x) =
  \sum_{x\in X}\rho_{\downarrow g}(x)=\rho_{\downarrow
    g}(X).\qedhere$$
\end{proof}

\begin{thm}\label{thm:cocompK4}
  All excluded minors for $\mathcal{P}_{M(K_4)}$ are in
  $\mathcal{R}_{M(K_4)}$.
\end{thm}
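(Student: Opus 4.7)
The plan is to argue by contradiction. Suppose some excluded minor $\sigma$ for $\mathcal{P}_{M(K_4)}$ lies outside $\mathcal{R}_{M(K_4)}$; then some line $e$ of $\sigma$ has $\sigma_{\downarrow e}\notin\mathcal{P}_{M(K_4)}$, so by Lemma~\ref{lemma: compression of excluded minor}, $\sigma_{\downarrow e}$ is again an excluded minor. Iterating line-compressions produces a strictly descending chain of excluded minors that, by Theorem~\ref{thm: contraction-only cases}, terminates at some $\rho_A\in\mathcal{R}_{M(K_4)}$. Let $\tau$ be the penultimate element; then $\tau$ is an excluded minor for $\mathcal{P}_{M(K_4)}$ with $|E(\tau)|=7$ and $\tau_{\downarrow f}=\rho_A$ for some line $f$ of $\tau$. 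The task is to rule out every such $\tau$.

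Writing $E=E(\rho_A)$ and arguing as in the proof of~\ref{spikelikes}, I obtain $\tau(f)=2$, $\tau(E\cup f)=\tau(E)=|A|+4$, $\tau$ has no parallel lines, $\tau(g)=\rho_A(g)$ for every $g\in E$, and, by Equation~(\ref{eq:compression}), $\tau(X)=\rho_A(X)+[X\in\mathcal{S}]$ for $X\subseteq E$, where $\mathcal{S}=\{X\subseteq E\,:\,X\text{ spans }f\text{ in }\tau\}$ is upward closed with $E\in\mathcal{S}$. Since a point cannot span a line and $\tau$ has no line parallel to $f$, we have $\tau(x)=\rho_A(x)$ for every $x\in E$; Lemma~\ref{lem:indcomp} then forces each $X\in\mathcal{S}$ to be $M(K_4)$-dependent. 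A nullity count shows $M_{\tau_{\setminus f}}$ has $|A|+6$ elements and rank $|A|+4$, hence nullity $2$; since minors cannot increase nullity and $M(K_4)$ has nullity $3$, $\tau_{\setminus f}\in\mathcal{P}_{M(K_4)}$ automatically, so the contradiction must come from $\tau_{/f}$, $\tau_{/g}$, or $\tau_{\setminus g}$ for some $g\in E$.

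By Lemmas~\ref{lemma:R 2-duality} and~\ref{lem:commute} together with $\rho_A^*\cong\rho_{E-A}$, $\tau$ may be replaced by its $2$-dual, reducing the eleven isomorphism classes of $\rho_A$ to six $2$-dual classes. For each representative, I would first classify the admissible rank functions $\tau$: upward closure of $\mathcal{S}$, $M(K_4)$-dependence of its members, submodularity on pairs such as $(E-a,E-b)$, and the constraints coming from the additional free values $\tau(X\cup f)$ for $X\subsetneq E$ together leave only a short finite list of candidates. For each candidate I would exhibit a proper minor of the form $\tau_{/f}$, $\tau_{/g}$, or $\tau_{\setminus g}$ whose natural matroid retains an $M(K_4)$-minor inherited from $M_{\rho_A}$, contradicting $\tau$ being excluded. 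The choice of $g$ is guided by the clone analysis of $X_f=\{f_1,f_2\}$ from the proof of Lemma~\ref{lemma: only contraction}: because $M(K_4)$ has no clones, scenario~(4) of the enumeration preceding that lemma is ruled out, so every $M(K_4)$-minor of $M_\tau$ falls under scenarios~(1)--(3) for the line $f$, and the permitted pattern (in combination with $\mathcal{S}$) singles out a suitable $g$.

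The main obstacle is the case analysis itself; the self-$2$-dual case ($|A|=3$ with neither $A$ nor $E-A$ a circuit) is the most delicate, since $2$-duality offers no simplification there, but in every case the combined constraints reduce $\tau$ to a short finite list of candidates, each dispatched by a direct natural-matroid computation.
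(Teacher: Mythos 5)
Your overall strategy is the same as the paper's: reduce to ruling out a single decompression step, i.e., an excluded minor $\tau$ on $E\cup f$ with $\tau(f)=2$ and $\tau_{\downarrow f}=\rho_A$, and then derive a contradiction by producing a proper minor of $\tau$ that is itself isomorphic to some $\rho_{A'}$ (hence outside $\mathcal{P}_{M(K_4)}$). Several of your preliminary reductions are sound and appear in the paper's argument in some form: $\tau(E\cup f)=\tau(E)$ (else $\tau_{\setminus f}=\rho_A$), the use of Lemma \ref{lem:indcomp} to force every set spanning $f$ to be $M(K_4)$-dependent, and the observation that $2$-duality (via Lemma \ref{lem:commute} and $\rho_A^*\cong\rho_{E-A}$) halves the number of cases. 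The nullity count showing $\tau_{\setminus f}$ is automatically in the class is a nice observation not made explicitly in the paper.

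However, there is a genuine gap: the entire core of the proof is deferred. Your claim that upward-closedness of $\mathcal{S}$, dependence of its members, and ``submodularity on pairs'' leave ``only a short finite list of candidates'' is exactly what has to be proved, and it is not at all routine. Two concrete problems. First, the assertion that $\tau$ has no parallel lines does not follow by ``arguing as in \ref{spikelikes}'': that argument used the fact that $S_2$ is the unique excluded minor for $\mathcal{P}_{U_{2,4}}$ with parallel lines, and $S_2\in\mathcal{P}_{M(K_4)}$, so no such shortcut is available here. The paper must prove the corresponding fact (item \ref{statement:lineonline}) by showing that a line of $E$ parallel to the new element forces $\tau_{\setminus f}=\rho_{A\cup x}$, and this in turn rests on item \ref{lem:E-1}. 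Second, your constraint system only pins down $\tau$ on subsets of $E$; as you note, the values $\tau(X\cup f)$ for $X\subsetneq E$ are additional unknowns, and controlling them is where most of the work lies --- this is the content of items \ref{statement:pointonline} through \ref{lem:23,45} in the paper, including the delicate dichotomy on $\tau((E-a)\cup f)$ at the end, where the contradictions are that $\tau_{/f}=\rho_{A-a}$ or $\tau_{\setminus a}\cong\rho_A$ rather than a direct natural-matroid computation. Without carrying out this analysis (or an equivalent finite enumeration with the candidate list actually exhibited and each candidate actually eliminated), the proposal is a plausible plan rather than a proof.
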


\begin{proof}
  Let $E$ be the ground set of $M(K_4)$.  Fix $g\not\in E$.  It
  suffices to show that there is no excluded minor $(E\cup g, \rho)$
  for $\mathcal{P}_{M(K_4)}$ where $\rho(g)=2$ and
  $\rho_{\downarrow g}=\rho_A$ for some $A\subseteq E$.  We prove this
  by contradiction; assume that $\rho_{\downarrow g}=\rho_A$.  Thus,
  for $X\subseteq E$, $g\not\in\cl_\rho(X)$ if and only if
  $\rho(X)=\rho_A(X)$.  Throughout the proof we label $M(K_4)$ as
  shown in Figure \ref{fig:MK4} and its $3$-point lines as
  $L_1=\{a,d,e\}$, $L_2=\{b,e,f\}$, $L_3=\{c,d,f\}$, and
  $L_4=\{a,b,c\}$.

  \begin{figure}[t]
    \centering
    \begin{tikzpicture}[scale=1]
      \draw[thick](0,0)--(1,1.74);%
      \draw[thick](2,0)--(1,1.74);%
      \draw[thick](0,0)--(1.5,0.87);%
      \draw[thick](2,0)--(0.5,0.87);%

      \filldraw (0,0) node[left] {\footnotesize$e$} circle (1.8pt); %
      \filldraw (2,0) node[right] {\footnotesize$c$} circle (1.8pt);%
      \filldraw (1,1.74) node[left] {\footnotesize$a$} circle (1.8pt);%
      \filldraw (0.5,0.87) node[left] {\footnotesize$d$} circle (1.8pt);%
      \filldraw (1.5,0.87) node[right] {\footnotesize$b$} circle (1.8pt);%
      \filldraw (1,0.57) node[above] {\footnotesize$f$} circle (1.8pt);%
    \end{tikzpicture}
    \caption{The labeling of $M(K_4)$ used in the proof of Theorem
      \ref{thm:cocompK4}.}\label{fig:MK4}
  \end{figure}
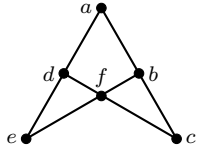

   \begin{sublemma}\label{lem:addelts}
     Let $L=\{x,y,z\}$ be a line of $M(K_4)$.  Assume that
     $\rho(L) -\rho(\{x,y\}) = |A\cap z|$.  If $Y\subset E\cup g$ with
     $Y\cap L=\{x,y\}$ and $\rho(Y)=\rho_A(Y)$, then
     $\rho(Y\cup z)=\rho_A(Y\cup z)$.
   \end{sublemma}

   \begin{proof}
     We have
     $\rho(Y\cup z)-\rho(Y)\leq \rho(L)-\rho(\{x,y\})= |A\cap z|$
     by submodularity and the hypothesis.  Since $\rho(Y)=\rho_A(Y)$,
     this gives
     $\rho(Y\cup z) \leq \rho_A(Y)+ |A\cap z| = \rho_A(Y\cup z)$.
     We also have $\rho_A(Y\cup z)\leq \rho(Y\cup z)$, so
     $\rho(Y\cup z)=\rho_A(Y\cup z)$.
   \end{proof}

   \begin{sublemma}\label{lem:E-1}
     If $x\in\cl_\rho(g)$, then $\rho(E-x)=\rho_A(E-x)$, so
     $\rho(X)=\rho_A(X)$ for all $X\subseteq E-x$.
   \end{sublemma}
   
   \begin{proof}
     By symmetry, it suffices to take $x=a$.  Consider
     $X\subseteq E-a$.  If $g\in\cl_\rho(X)$, then $a\in\cl_\rho(X)$,
     and so $\rho(X)=\rho(X\cup a)$; thus,
     $\rho_{\downarrow g}(X)=\rho_{\downarrow g}(X\cup a)$.  So
     whenever $\rho_A(X)<\rho_A(X\cup a)$, we have
     $g\not\in\cl_\rho(X)$, and so $\rho(X)=\rho_A(X)$.  Thus,
     $\rho(L_i)=\rho_A(L_i)$ for $i\in\{2,3\}$, and if $y\in E-a$,
     then $\rho(y)=\rho_A(y)$.  Lemma \ref{lem:indcomp} now gives
     $\rho(X)=\rho_A(X)$ if $X\subseteq E-a$ and $X$ is independent in
     $M(K_4)$.  Now $\rho(\{d,e,f\})=\rho_A(\{d,e,f\})$ and
     $$\rho(L_3)-\rho(\{d,f\}) =
     \rho_A(L_3)-\rho_A(\{d,f\})=|A\cap c|,$$ so item
     \ref{lem:addelts} gives $\rho(\{c,d,e,f\})=\rho_A(\{c,d,e,f\})$.
     Another application of item \ref{lem:addelts}, using $L_2$, gives
     $\rho(E-a)=\rho_A(E-a)$.  This equality implies that
     $g\not\in \cl_\rho(E-a)$, from which the second part of the
     statement follows.
   \end{proof}
     
   The next three items limit what the restriction of $\rho$ to a set
   $\{g,x\}$ with $x\in E$ can be.

   \begin{sublemma}\label{statement:lineonline}
     If $x\in E$ and $\rho(x)=2$, then $\rho(\{g,x\})>2$.
   \end{sublemma}

   \begin{proof}
     Assume instead that $\rho(g) = \rho(x)=\rho(\{x,g\})=2$.  Thus,
     $\rho_{\downarrow g}(x)=1$, so $x\not\in A$.  We claim that
     $\rho_{\del g} =\rho_{A\cup x}$, contrary to $\rho$ being an
     excluded minor.  Since $x\in\cl_\rho(g)$, item \ref{lem:E-1}
     gives $\rho(X)=\rho_A(X)=\rho_{A\cup x}(X)$ for all
     $X\subseteq E-x$.  If $X\subseteq E$ with $x\in X$, then
     $g\in \cl_\rho(X)$, so $\rho(X)=\rho_A(X)+1=\rho_{A\cup x}(X)$
     since $x\not\in A$.  So, as claimed,
     $\rho_{\del g} =\rho_{A\cup x}$.
   \end{proof}
   
   \begin{sublemma}\label{statement:pointonline}
     If $x\in E$ and $\rho(x)=1$, then $\rho(\{g,x\})>2$, that is,
     $x\not\in\cl_\rho(g)$.
   \end{sublemma}

   \begin{proof}
     Assume to the contrary
     that $\rho(x)=1$ and $x\in\cl_\rho(g)$.  Thus,
     $\rho_A(x)=\rho_{\downarrow g}(x)=1$, so $x\not\in A$.
     
     Since $x\in\cl_\rho(g)$, item \ref{lem:E-1} gives
     $\rho(E-x)=\rho_A(E-x)$.  If $x\in\cl_\rho(E-x)$, then
     $$\rho_A(E)\leq \rho(E)=\rho(E-x)=\rho_A(E-x)\leq
     \rho_A(E),$$ so $\rho_A(E)=\rho(E)$.  Therefore,
     $g\not\in\cl_\rho(E)$, and so if $X\subseteq E$, then
     $g\not\in \cl_\rho(X)$ and so $\rho(X)=\rho_A(X)$.  Thus,
     $\rho_{\del g} = \rho_A$, contrary to $\rho$ being an excluded
     minor.

     Now assume that $x\not\in\cl_\rho(E-x)$.  We claim that the
     bijection from $(E-x)\cup g$ to $E$ that fixes $E-x$ and maps $g$
     to $x$ is an isomorphism from $\rho_{\del x}$ onto
     $\rho_{A\cup x}$, contrary to $\rho$ being an excluded minor.
     Consider $X\subseteq E-x$.  Since $x\in\cl_\rho(g)$, item
     \ref{lem:E-1} gives
     $$\rho_{\del x}(X) = \rho(X)= \rho_A(X)= \rho_{A\cup x}(X).$$ Now
     $\rho(X\cup g)-\rho(X)$ is either $1$ or $2$.  First assume that
     $\rho(X\cup g)=\rho(X)+2$.  Since $x\not\in\cl_\rho(E-x)$, we
     have $\rho(X\cup x)=\rho(X)+1< \rho(X\cup g)$, so
     $g\not\in \cl_\rho(X\cup x)$.  Thus,
     $\rho(X\cup x)= \rho_A(X\cup x)$. Likewise, $\rho(X)=\rho_A(X)$.
     Thus, $\rho_A(X\cup x)=\rho_A(X)+1$.  Therefore,
     \begin{align*}
       \rho_{\del x} (X\cup g)
       & = \rho(X\cup g) \\
       & = \rho(X)+2\\
       & = \rho_A(X)+2\\
       & =  \rho_A(X\cup x)+1\\
       & = \rho_{A\cup x}(X\cup x).
     \end{align*}
     Now assume that $\rho(X\cup g)=\rho(X)+1$.  Now
     $\rho(X\cup x)=\rho(X)+ 1$ since $x\not\in\cl_\rho(E-x)$.  Thus,
     $\rho(X\cup x)=\rho(X\cup g)=\rho(X\cup \{x,g\})$ since
     $x\in\cl_\rho(g)$, and so $g\in \cl_\rho(X\cup x)$.  Thus,
     $\rho_A(X\cup x)=\rho_{\downarrow g}(X\cup x) = \rho(X\cup x)-1$.
     Now
     \begin{align*}
       \rho_{\del x} (X\cup g)
       & = \rho(X\cup g) \\
       & = \rho(X\cup x)\\
       & = \rho_A(X\cup x)+1\\
       & = \rho_{A\cup x}(X\cup x).
     \end{align*}
     Thus, $\rho_{\del x}$ is isomorphic to $\rho_{A\cup x}$.  This
     contradiction completes the proof of item
     \ref{statement:pointonline}.
   \end{proof}

   \begin{sublemma}
     There is at most one element $x\in E$ with $\rho(x)=2$ and
     $\rho(\{g,x\})=3$.
   \end{sublemma}

   \begin{proof}
     If, to the contrary, there are two different elements $x,y\in E$
     with $\rho(x)=2=\rho(y)$ and $\rho(\{g,x\})=3=\rho(\{g,y\})$,
     then
     $\rho(\{g,x,y\})\leq \rho(\{g,x\})+\rho(\{g,y\})-\rho(g) =4$, so
     $\rho_{\downarrow g}(\{x,y\})\leq 3$ while
     $\rho_{\downarrow g}(x)=2=\rho_{\downarrow g}(y)$, contrary to
     the structure of $\rho_A = \rho_{\downarrow g}$.
   \end{proof}

   Having limited what the restriction of $\rho$ to any set $\{g,x\}$,
   with $x\in E$, can be, we now turn to the broader structure of
   $\rho$.

   \begin{sublemma}
     Let $L$ and $L'$ be $3$-point lines of $M(K_4)$ with
     $\rho(L)>\rho_A(L)$ and $\rho(L')>\rho_A(L')$.  If
     $x\in L\cap L'$, then $\rho(x)=2$ and $\rho(\{g,x\})=3$.
   \end{sublemma}

   \begin{proof}
     The hypotheses give $g\in \cl_\rho(L)\cap \cl_\rho(L')$.
     Submodularity gives
     $$\rho(L\cup g)+\rho(L'\cup g)\geq \rho(L\cup L' \cup
     g)+\rho(\{g,x\}).$$ Thus,
     $\rho(L)+\rho(L')\geq \rho(L\cup L')+\rho(\{g,x\})$.
     Using $\rho(L)=\rho_A(L)+1 =3+|L\cap A|$ and the similar
     expressions for $\rho(L')$ and $\rho(L\cup L')$, we have
     $$3+|L\cap A|+3+|L'\cap A|\geq 4+|(L\cup L')\cap A|+\rho(\{g,x\}).$$ 
     Thus, $2+|A\cap x|\geq \rho(\{g,x\})$.  Now $\rho(\{g,x\})\ne 2$
     by items \ref{statement:lineonline} and
     \ref{statement:pointonline} above.  Thus, $x\in A$, so
     $\rho(x)=2$ and $\rho(\{g,x\})=3$.
   \end{proof}

   The previous two items have the next statement as an immediate
   corollary.
   
   \begin{sublemma}
     There are at most two lines among $L_1,L_2,L_3,L_4$ for which
     $\rho(L_i)>\rho_A(L_i)$.
   \end{sublemma}

   \begin{sublemma}
     There are at least two lines among $L_1,L_2,L_3,L_4$ for which
     $\rho(L_i)>\rho_A(L_i)$.
   \end{sublemma}

   \begin{proof}
     If the statement fails, then, by symmetry, we may assume that
     $\rho(L_i)=\rho_A(L_i)$ for all $i\in [3]$.  Thus, if
     $X\subseteq L_i$ for some $i\in[3]$, then
     $g \not \in\cl_\rho(X)$, and so $\rho_A(X)=\rho(X)$.  Lemma
     \ref{lem:indcomp} now gives $\rho_A(\{d,e,f\})=\rho(\{d,e,f\})$.
     Also,
     $$\rho(L_1) -\rho(\{d,e\}) = \rho_A(L_1) -\rho_A(\{d,e\}) =
     |A\cap a|.$$ Likewise, $\rho(L_2) -\rho(\{e,f\}) = |A\cap b|$ and
     $\rho(L_3) -\rho(\{d,f\}) = |A\cap c|$. With these equalities and
     $\rho(\{d,e,f\})=\rho_A(\{d,e,f\})$, we get $\rho(E)=\rho_A(E)$
     by applying item \ref{lem:addelts} three times.  Thus,
     $g\not\in\cl_\rho(E)$, so if $X\subseteq E$, then
     $g\not\in \cl_\rho(X)$ and so $\rho(X)=\rho_A(X)$.  Thus,
     $\rho_{\del g} = \rho_A$, contrary to $\rho$ being an excluded
     minor.
   \end{proof}

   In light of what is shown above and the symmetry of $M(K_4)$, we
   may now make the following two assumptions:
   \begin{itemize}
   \item[(a)] $\rho(L_1)=\rho_A(L_1)+1$ and $\rho(L_4)=\rho_A(L_4)+1$,
     and
   \item[(b)] $\rho(L_2)=\rho_A(L_2)$ and $\rho(L_3)=\rho_A(L_3)$.
   \end{itemize}
   It follows that
   \begin{itemize}
   \item[(c)] $\rho(a)=2$ and $\rho(\{a,g\})=3$, so $a\in A$,
   \item[(d)] since $g\in\cl_\rho(L_1)\cap \cl_\rho(L_4)$, if either
     $L_1\subseteq X\subseteq E$ or $L_4\subseteq X\subseteq E$, then
     $\rho(X) =\rho(X\cup g)=\rho_A(X)+1$, and
   \item[(e)] for all $x\in E-a$, we have $\rho(\{g,x\})=2+\rho(x)$,
     so $\rho(x) =\rho_A(x)$.
   \end{itemize}
   It follows that, by Lemma \ref{lem:indcomp}, if $X\subset E-a$ is
   an independent set of $M(K_4)$, then $\rho(X) =\rho_A(X)$.  With
   assumption (b), the hypotheses of item \ref{lem:addelts} apply when
   $L$ is either $L_2$ or $L_3$; since $\rho(X) =\rho_A(X)$ for each
   basis $X$ of $M(K_4)\del a$, with item \ref{lem:addelts} we get
   \begin{itemize}
   \item[(f)] $\rho(X) =\rho_A(X)$ for all subsets $X$ of $E-a$.
   \end{itemize}
   
   Since $\rho(E-a)= \rho_A(E-a)$, we get $g\not\in\cl_\rho(E-a)$.
   Thus, $\rho((E-a)\cup g)$ is either $\rho(E-a)+1$ or $\rho(E-a)+2$.

   First assume that $\rho((E-a)\cup g)=\rho(E-a)+2$.  We claim that
   this yields the contradiction that $\rho_{/g}=\rho_{A-a}$.  Fix
   $X\subseteq E$.  If $a\not\in X$, then $\rho(X\cup g)=\rho(X)+2$,
   and so $\rho_{/g}(X)=\rho(X)=\rho_A(X) =\rho_{A-a}(X)$.  Now assume
   that $a\in X$.  If $g\in \cl_\rho(X)$, then
   \begin{align*}
     \rho_{/g}(X)
     & = \rho(X\cup g)-2\\
     & = \rho(X)-2\\
     & = \rho_A(X)+1-2\\
     & = \rho_{A-a}(X).
   \end{align*}
   If $g\not\in \cl_\rho(X)$, then, since $\rho(\{a,g\})-\rho(a)=1$,
   \begin{align*}
     \rho_{/g}(X)
     & = \rho(X\cup g)-2\\
     & = \rho(X)+1-2\\
     & = \rho_A(X)-1\\
     & = \rho_{A-a}(X).
   \end{align*}
   
   Finally, assume that $\rho((E-a)\cup g)=\rho(E-a)+1$.  Since
   $g\in \cl_\rho(E)$ and $a\in A$,
   \begin{align*}
     \rho(E\cup g)
     & = \rho(E)\\
     & = \rho_A(E)+1\\
     & = \rho_A(E-a)+2\\
     & = \rho(E-a)+2.
   \end{align*}
   Thus, $\rho((E-a)\cup g)=\rho(E\cup g)-1$.
   
   We claim that the map from $(E-a)\cup g$ to $E$ that fixes each
   $x\in E-a$ and maps $g$ to $a$ is an isomorphism of $\rho_{\del a}$
   onto $\rho_A$.  By item (f) above, if $X\subseteq E-a$, then
   $\rho_{\del a}(X)=\rho_A(X)$.  We now treat the sets $X\cup g$ with
   $X$ a nonempty subset of $E-a$.  By item (e), if $x\in E-a$, then
   $\rho(\{g,x\}) = \rho(x)+2=\rho_A(\{a,x\})$.  We treat the
   remaining sets in cases below.

   \begin{sublemma}\label{lem:23,45}
     If $X\subseteq E-a$ and either $\{b,c\}\subseteq X$ or
     $\{d,e\}\subseteq X$, then $\rho_A(X\cup a)= \rho(X\cup g)$
   \end{sublemma}
  
   \begin{proof}
     Submodularity gives
     $\rho((E-a)\cup g) + \rho(X\cup \{a,g\})\geq \rho(E\cup
     g)+\rho(X\cup g)$, so, since
     $\rho((E-a)\cup g)=\rho(E\cup g)-1$, we have
     $\rho(X\cup\{a,g\})-1\geq \rho(X\cup g)$.  With item (d) above,
     this gives
     $\rho_A(X\cup a)=\rho(X\cup\{a,g\})-1 \geq \rho(X\cup
     g)>\rho_A(X)$.  Since $\rho_A(X\cup a)-\rho_A(X)=1$, we get
     $\rho_A(X\cup a)= \rho(X\cup g)$, as needed.
   \end{proof}

   The next item applies if $X$ is $L_2$, $L_3$, or a $2$-subset of
   $E-a$ other than $\{b,c\}$ and $\{d,e\}$.
   
   \begin{sublemma}
     If $X\subset E-a$ where $|X\cap L_1|\leq 1$, $|X\cap L_4|\leq 1$,
     and $r_{M(K_4)}(X)=2$, then $\rho(X\cup g) = \rho_A(X\cup a)$.
   \end{sublemma}
   
   \begin{proof}
     The hypotheses imply that, for at least one $i\in \{1,4\}$, we
     have $L_i-a=\{x,y\}$ with $x\in X$ and $y\not\in X$.  Now
     $\{x,y\}$ is either $\{b,c\}$ or $\{d,e\}$, so item
     \ref{lem:23,45} applies to supersets of $\{x,y\}$.  By
     submodularity and the cases proven above,
     \begin{align*}
       \rho(X\cup g)
       & \geq \rho(X\cup\{y,g\})+\rho(\{x,g\}) - \rho(\{x,y,g\})\\
       & = \rho_A(X\cup \{a,y\})+\rho_A(\{a,x\}) - \rho_A(L_i)\\
       & = 3+ |A\cap (X\cup \{a,y\})|+2+|A\cap\{a,x\}| - 2-|A\cap L_i|\\
       & = 3+ |A\cap(X\cup a)|\\
       & = \rho_A(X\cup a).
     \end{align*}
     Since
     $\rho_A(X\cup a) = \rho_A(X)+2= \rho(X)+2 \geq \rho(X\cup g)$,
     equality follows.
   \end{proof}

   \begin{sublemma}
     We have $\rho(\{b,d,f,g\})=\rho_A(\{a,b,d,f\})$ and
     $\rho(\{c,e,f,g\}) = \rho_A(\{a,c,e,f\})$.
   \end{sublemma}

   \begin{proof}
     The two proofs are similar; we treat the first equality.  By
     submodularity,
     $$\rho(\{b,c,d,f,g\})+ \rho(\{b,d,e,f,g\})- \rho((E-a)\cup g)
     \geq \rho(\{b,d,f,g\}),$$ so, with what we have proven,
     $\rho_A(E-e)+ \rho_A(E-c)- \rho_A(E)\geq \rho(\{b,d,f,g\})$, so
     $$3+|A-e|+ 3+ |A-c|- 3-|A|\geq \rho(\{b,d,f,g\}),$$ so
     $3+|A-\{c,e\}|\geq \rho(\{b,d,f,g\})$, that is,
     $\rho_A(\{a,b,d,f\})\geq \rho(\{b,d,f,g\})$.  Thus,
     \begin{align*}
       \rho_A(\{b,d,f\})+1
       & = \rho_A(\{a,b,d,f\})\\
       & \geq  \rho(\{b,d,f,g\})\\
       & >  \rho(\{b,d,f\})\\
       & = \rho_A(\{b,d,f\}).
     \end{align*}
     This forces $\rho_A(\{a,b,d,f\}) = \rho(\{b,d,f,g\})$, as
     desired.
   \end{proof}
 
   This establishes the final contradiction that completes the proof.
\end{proof}

Theorems \ref{thm: contraction-only cases} and \ref{thm:cocompK4} give
the following result.

\begin{thm}\label{thm:noK4exmin}
  The excluded minors for the class $\mathcal{P}_{M(K_4)}$ are the
  eleven non-isomorphic $2$-polymatroids $\rho_A$ given by Equation
  \emph{(\ref{equation: rho_a})} for subsets $A$ of $E(M(K_4))$.
\end{thm}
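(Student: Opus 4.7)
My plan is to combine the two main theorems of this section. Given any excluded minor $(E,\rho)$ for $\mathcal{P}_{M(K_4)}$, Theorem~\ref{thm:cocompK4} places $\rho$ in $\mathcal{R}_{M(K_4)}$, and Theorem~\ref{thm: contraction-only cases} then identifies it as $\rho_A$ where $A = \{e \in E : \rho(e) = 2\}$. For the converse I would verify that each $\rho_A$ is an excluded minor: the failure $\rho_A \notin \mathcal{P}_{M(K_4)}$ is witnessed by $M_{\rho_A}/A_1 \cong M(K_4)$ for any transversal $A_1$ of $\{X_e : e \in A\}$, as noted in the text preceding Lemma~\ref{lemma: circuits of rho_A}; and the proper minors $(\rho_A)_{\setminus e}$ and $(\rho_A)_{/e}$ lie in $\mathcal{P}_{M(K_4)}$ because every $M(K_4)$-minor of $M_{\rho_A}$ must meet every $X_e$ (by Lemma~\ref{lemma: only contraction}, which restricts the pattern of deletions and contractions on each pair $X_e$ to the ``contract-one-keep-one'' pattern). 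That line-compressions stay in $\mathcal{P}_{M(K_4)}$ is just the definition of $\mathcal{R}_{M(K_4)}$.

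The remaining task is to count the $\rho_A$ up to isomorphism. Since $\rho_A \cong \rho_{\tau(A)}$ for every $\tau \in \mathrm{Aut}(M(K_4)) \cong S_4$, this reduces to counting $S_4$-orbits of subsets of $E(M(K_4))$. I would work through the cases by $|A|$: a single orbit for $|A| \in \{0,1,5,6\}$; two orbits for $|A|=2$, distinguished by whether $A$ is a flat of $M(K_4)$ (equivalently, whether $A$ fails to lie in a triangle of $K_4$); three orbits for $|A|=3$, according to whether $A$ is a circuit, $E - A$ is a circuit, or neither; and two orbits for $|A|=4$, dual to the $|A|=2$ case. The running total is $4+2+3+2 = 11$.

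The substantive obstacle has already been dispatched by the two theorems being invoked, each resting on a delicate structural argument: Lemma~\ref{lemma: only contraction} eliminates the ``delete-one-keep-one'' and ``keep-both'' patterns on the pair $X_e$, and the submodularity arithmetic inside Theorem~\ref{thm:cocompK4} shows that no further rank-$2$ element can be appended to any $\rho_A$ without producing a polymatroid that is either in $\mathcal{P}_{M(K_4)}$ or reducible by compression to some $\rho_{A'}$. Granting those, the only care needed at this stage is to confirm that the orbit invariants used above genuinely separate the $S_4$-orbits, which follows from a short ad hoc inspection: for instance, for $|A| = 3$ the $S_4$-orbits on $\binom{E}{3}$ are the four triangles of $K_4$, the four stars (complements of triangles), and the twelve three-edge paths, matching the three cases listed.
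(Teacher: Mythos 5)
Your proposal follows the paper's route exactly: the paper's proof of this theorem is the single observation that Theorems \ref{thm: contraction-only cases} and \ref{thm:cocompK4} combine to give it, with the fact that each $\rho_A$ lies in $\mathcal{R}_{M(K_4)}$ asserted as routine just before Lemma \ref{lemma: circuits of rho_A} and the count of eleven isomorphism classes carried out in the bulleted orbit analysis; your orbit count by $|A|$ agrees with the paper's. The one slip is in your verification that the proper minors of $\rho_A$ lie in $\mathcal{P}_{M(K_4)}$: you cite Lemma \ref{lemma: only contraction}, but that lemma's hypothesis is $(E,\rho)\in\mathcal{R}_{M(K_4)}$, i.e., that $\rho$ is already known to be an excluded minor whose line-compressions stay in the class, so invoking it to establish that $\rho_A$ is an excluded minor is circular. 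The intended check is direct: as noted in the proof of Theorem \ref{thm:spexmin}, $M_{\rho_A}$ is obtained from $M(K_4)$ by $2$-summing a triangle onto each element of $A$, so $M_{(\rho_A)_{\setminus e}}=M_{\rho_A}\setminus X_e$ and $M_{(\rho_A)_{/e}}$ (a restriction of $M_{\rho_A}/X_e$) are minors of a $2$-sum of $M(K_4)\setminus e$ or $M(K_4)/e$ with triangles, hence series-parallel and free of $M(K_4)$-minors, and the compressions $(\rho_A)_{\downarrow e}$ are handled similarly via Corollary \ref{cor: natural matroid of compression}. With that repair the argument is complete and coincides with the paper's.
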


Having found the excluded minors for the class $\mathcal{P}_{M(K_4)}$,
we turn to the class $\mathcal{SP}$ of $2$-polymatroids whose natural
matroids are series-parallel.  We next show that the excluded minors
for $\mathcal{SP}$ are those for $\mathcal{P}_{U_{2,4}}$ together with
those for $\mathcal{P}_{M(K_4)}$.

\begin{thm}\label{thm:spexmin}
  The excluded minors for $\mathcal{SP}$ are the excluded minors for
  $\mathcal{P}_{U_{2,4}}$ together with the excluded minors for
  $\mathcal{P}_{M(K_4)}$.
\end{thm}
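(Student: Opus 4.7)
The plan is to exploit the identification $\mathcal{SP} = \mathcal{P}_{U_{2,4}} \cap \mathcal{P}_{M(K_4)}$, which holds because a matroid is series-parallel if and only if it has neither a $U_{2,4}$-minor nor an $M(K_4)$-minor. One direction is then immediate: if $(E,\rho)$ is an excluded minor for $\mathcal{SP}$, then $M_\rho$ has a $U_{2,4}$- or $M(K_4)$-minor while every proper minor of $\rho$ lies in $\mathcal{SP}$, hence in both $\mathcal{P}_{U_{2,4}}$ and $\mathcal{P}_{M(K_4)}$, so $\rho$ is an excluded minor for at least one of the two classes. For the converse, it suffices to establish (i) every excluded minor for $\mathcal{P}_{U_{2,4}}$ lies in $\mathcal{P}_{M(K_4)}$, and (ii) every excluded minor for $\mathcal{P}_{M(K_4)}$ lies in $\mathcal{P}_{U_{2,4}}$. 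Together with the minor-closedness of each class, these force every proper minor of such an excluded minor to lie in $\mathcal{SP}$, so it is an excluded minor for $\mathcal{SP}$.

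For (i), I would work through Theorem \ref{thm: main theorem}'s list case by case. The finite excluded minors $U_{2,4}, L_2, A_3, B_3, A_4, B_4$ each have natural matroid with at most 6 elements, and their single-step minors have strictly fewer, too few to contain an $M(K_4)$-minor; the cases $A_5$ and $A_6$ can be reduced via $2$-duality (Theorem \ref{thm:dual closure of P}). The infinite family $S_n$ is the substantive case: the proof of Proposition \ref{prop:spikelike} and the text after it identify $M_{(S_n)_{/e}}=U_{n-2,n-1}$ and $M_{(S_n)_{\setminus e}}$ as the parallel connection of $n-1$ copies of $U_{2,3}$ at a common basepoint with that basepoint deleted; both are series-parallel, since $U_{2,3}$ is, and the series-parallel class is closed under parallel connection at a common basepoint and under minors. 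Further minors inherit this by minor-closedness of $\mathcal{SP}$.

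For (ii), given $\rho_A$ from Theorem \ref{thm:noK4exmin}, I would exhibit an $\mathbb{F}_2$-representation of $M_{\rho_A}$ directly. Fix a binary representation of $M(K_4)$ by columns $v_a,\ldots,v_f\in\mathbb{F}_2^3$ and extend the ambient space to $\mathbb{F}_2^{3+|A|}$ by adding one new coordinate $\delta_e$ for each $e\in A$. For $e\in E\setminus A$ assign the column $v_e$ (padded with zeros in the new coordinates); for each $e\in A$ assign $v_e+\delta_e$ to $e_1$ and $\delta_e$ to $e_2$. A direct rank calculation shows that the columns indexed by $X_B$ have rank $r_{M(K_4)}(B)+|B\cap A|=\rho_A(B)$ for every $B\subseteq E(M(K_4))$, and the swap $e_1\leftrightarrow e_2$ is realized by the linear automorphism $T(x,y)=(x+f(y),y)$ of $\mathbb{F}_2^{3+|A|}$ with $f(\delta_e)=v_e$ and $f(\delta_{e'})=0$ for $e'\neq e$, so each $\{e_1,e_2\}$ is a pair of clones. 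By Lemma \ref{lemma:shownatural}, the represented binary matroid is $M_{\rho_A}$, so $\rho_A\in\mathcal{P}_{U_{2,4}}$ and hence every proper minor of $\rho_A$ lies in $\mathcal{P}_{U_{2,4}}$ as well.

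The main obstacle is the verification for $S_n$ in (i): once one has in hand the explicit descriptions of $M_{(S_n)_{/e}}$ and $M_{(S_n)_{\setminus e}}$, the argument reduces to the closure properties of the series-parallel class; (ii), although it requires a careful rank check, is a direct construction.
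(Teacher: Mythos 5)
Your proposal is correct and follows essentially the same route as the paper: both reduce the converse direction to showing that every proper minor of an excluded minor for $\mathcal{P}_{U_{2,4}}$ lies in $\mathcal{P}_{M(K_4)}$ (with $S_n$ as the only substantive case, handled via $M_{(S_n)_{/e}}=U_{n-2,n-1}$ together with self-$2$-duality) and that each $\rho_A$ is binary-natural. The only differences are local: the paper certifies that $M_{\rho_A}$ is binary by realizing it as a $2$-sum of $M(K_4)$ with triangles rather than by your explicit $\mathbb{F}_2$-matrix, and for $S_n$ you in fact verify the needed statement about \emph{proper minors} rather than your stated claim (i) that $S_n$ itself lies in $\mathcal{P}_{M(K_4)}$ --- which is all the argument requires.
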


\begin{proof}
  Let $\rho$ be an excluded minor for $\mathcal{SP}$.  Then $\rho$
  must be an excluded minor for either $\mathcal{P}_{U_{2,4}}$ or
  $\mathcal{P}_{M(K_4)}$.  It remains to show that both sets of
  excluded minors are minor minimal for $\mathcal{SP}$.  First, we
  show that if $\rho'$ is a proper minor of an excluded minor $\rho$
  for $\mathcal{P}_{U_{2,4}}$, then $\rho'\in\mathcal{P}_{M(K_4)}$.
  If this failed, then $|E(\rho)|\geq 7$, so $\rho$ is $S_n$ for some
  $n\geq 7$.  By the proof of Proposition \ref{prop:spikelike}, each
  contraction $(S_n)_{/e}$ of $S_n$ is the uniform matroid
  $U_{n-2,n-1}$, so $(S_n)_{/e}\in\mathcal{P}_{M(K_4)}$.  Also,
  $(S_n)_{\del e}\in\mathcal{P}_{M(K_4)}$ since $S_n$ is self-$2$-dual
  and $\mathcal{P}_{M(K_4)}$ is closed under $2$-duality.  So all
  excluded minors for $\mathcal{P}_{U_{2,4}}$ are minor minimal for
  $\mathcal{SP}$.
	
  Now let $\rho_A$ be an excluded minor for $\mathcal{P}_{M(K_4)}$.
  To get the natural matroid of $\rho_A$, for each $x\in A$, take the
  $2$-sum of $M(K_4)$ with a $3$-circuit with base point $x$.  Since
  taking $2$-sums preserves $\mathbb{F}$-representability, $\rho_A$ is
  binary-natural, as needed to complete the proof.
\end{proof}

\bibliographystyle{alpha}

\begin{thebibliography}{99}

\bibitem{lpm} J.~Bonin, Lattice path matroids: the excluded minors,
  \emph{J.\ Combin.\ Theory Ser.\ B} \textbf{100} (2010) 585--599.
	
\bibitem{quotient} J.~Bonin and C.~Chun, Decomposable polymatroids and
  connections with graph coloring, \emph{Europ.\ J.\ Combin.}\
  \textbf{89} (2020)
  \href{https://doi.org/10.1016/j.ejc.2020.103179}{https://doi.org/10.1016/j.ejc.2020.103179}.
	
\bibitem{natural matroid} J.~Bonin, C.~Chun, and T.~Fife, The natural
  matroid of a polymatroid (submitted).

\bibitem{LPP} J.~Bonin, C.~Chun, and T.~Fife, The excluded minors for
  lattice path polymatroids, \emph{Electron.\ J.\ Combin.}\
  \textbf{29(2)} (2022) \#P2.38 (19 pages).
	
\bibitem{sbo} J.~Bonin and T.~Savitsky, An infinite family of excluded
  minors for strong base-orderability, \emph{Linear Algebra Appl.}\
  \textbf{488} (2016) 396--429.
	
\bibitem{laminar} T.~Fife and J.~Oxley, Laminar matroids, \emph{Europ.\
    J.\  Combin.}\ \textbf{62} (2017) 206--216.
	
\bibitem{generalized laminar} T.~Fife and J.~Oxley, Generalized
  laminar matroids, \emph{Europ.\  J.\  Combin.}\ \textbf{79} (2019)
  111--122.
	
\bibitem{Rota's conjecture} J.~Geelen, B.~Gerards, and G.~Whittle,
  Solving Rota’s conjecture, \emph{Notices Amer.\  Math.\  Soc.}\
  \textbf{61} (2014) 736--743.
	
\bibitem{Helgason} T.~Helgason, Aspects of the theory of
  hypermatroids, in: \emph{Hypergraph Seminar} (Lecture Notes in
  Math., Vol.\  411, Springer, Berlin, 1974) 191--213.
	
\bibitem{non-base-orderable} A.W.~Ingleton, Non-base-orderable
  matroids, \emph{Proc.\  5th British Combinatorial Conf.}, Aberdeen
  (1975) 355--359.
	
\bibitem{gammoids} A W.~Ingleton, Transversal matroids and related
  structures, in: \emph{Higher combinatorics (Proc.\ NATO Adv.
    Study Inst., Berlin, 1976)}, (NATO Adv.\ Study Inst.\ Ser.\ C:
  Math.\ Phys.\ Sci., Vol.\ 31, Reidel, Dordrecht-Boston, Mass., 1977)
  117--131.

\bibitem{JPSK} J.~Kung, A characterization of orthogonal duality in
  matroid theory, \emph{Geom.\ Dedicata} \textbf{15} (1983) 69--72.
  
\bibitem{Lovasz} L.~Lov\'asz, Flats in matroids and geometric graphs,
  in: \emph{Combinatorial surveys} (Proc.\ 6th British Combinatorial
  Conf., Academic Press, London, 1977) 45--86.
  
\bibitem{Matus} F.~Mat\'u\v{s}, Excluded minors for Boolean
  polymatroids, \emph{Discrete Math.}\ \textbf{235} (2001) 317--321.

\bibitem{natural proof} C.J.H.~McDiarmid, Independence structures and
  submodular functions, \emph{Bull.\ London Math.\ Soc.}\ \textbf{5}
  (1973) 18--20.
	
\bibitem{series-parallel} J.~Oxley, Graphs and series-parallel
  networks, in: \emph{Theory of Matroids} (Encyclopedia Math.\ Appl.,
  Vol.\ 26, Cambridge Univ.\ Press, Cambridge, 1986) 97--126.
	
\bibitem{oxley} J.~Oxley, \emph{Matroid Theory}, second edition
  (Oxford University Press, Oxford, 2011).
	
\bibitem{nested} J.~Oxley, K.~Prendergast, and D.~Row, Matroids whose
  ground sets are domains of functions, \emph{J.\ Austral.\ Math.\
    Soc.\ Ser.\ A} \textbf{32} (1982) 380--387.
	
\bibitem{wheels and whirls} J.~Oxley, C.~Semple, and G.~Whittle, A
  wheels-and-whirls theorem for $3$-connected $2$-polymatroids, \emph{SIAM
    J.  Discrete Math.}\  \textbf{30} (2016) 493--524.

\bibitem{GeoffDual} G.~Whittle, Duality in polymatroids and set
  functions, \emph{Combin.\ Probab.\ Comput.}\ \textbf{1} (1992)
  275--280.
  
\end{thebibliography}

\end{document}